\documentclass[reqno,a4paper,12pt]{amsart}
\usepackage[utf8]{inputenc}

\setlength{\textwidth}{\paperwidth}
\addtolength{\textwidth}{-1.5in}
\calclayout

\usepackage[utf8]{inputenc}
\usepackage{verbatim}
\usepackage[english]{babel}
\usepackage{amsmath}
\usepackage{amsfonts}
\usepackage{amsthm}
\usepackage{dirtytalk}
\usepackage{float}
\usepackage{color}
\usepackage{tikz}
\usetikzlibrary{decorations.pathreplacing,decorations.markings}
\usepackage{graphicx}
\usepackage{subcaption}
\usepackage{enumerate}
% to show the keys and not go over margins
\usepackage{seqsplit}

\usepackage[pdftex,colorlinks]{hyperref}
\hypersetup{
	colorlinks=true,
	linkcolor=blue,
	citecolor=red
}
\tikzset{
	% style to apply some styles to each segment of a path
	on each segment/.style={
		decorate,
		decoration={
			show path construction,
			moveto code={},
			lineto code={
				\path [#1]
				(\tikzinputsegmentfirst) -- (\tikzinputsegmentlast);
			},
			curveto code={
				\path [#1] (\tikzinputsegmentfirst)
				.. controls
				(\tikzinputsegmentsupporta) and (\tikzinputsegmentsupportb)
				..
				(\tikzinputsegmentlast);
			},
			closepath code={
				\path [#1]
				(\tikzinputsegmentfirst) -- (\tikzinputsegmentlast);
			},
		},
	},
	% style to add an arrow in the middle of a path
	mid arrow/.style={postaction={decorate,decoration={
				markings,
				mark=at position .6 with {\arrow[#1]{stealth}}
	}}},
	rmid arrow/.style={postaction={decorate,decoration={
				markings,
				mark=at position .4 with {\arrowreversed[#1]{stealth}}
	}}},
}

%% coordinated system in tikz \draw (-2,-2) to[grid with coordinates] (7,4);
\makeatletter
\def\grd@save@target#1{%
  \def\grd@target{#1}}
\def\grd@save@start#1{%
  \def\grd@start{#1}}
\tikzset{
  grid with coordinates/.style={
    to path={%
      \pgfextra{%
        \edef\grd@@target{(\tikztotarget)}%
        \tikz@scan@one@point\grd@save@target\grd@@target\relax
        \edef\grd@@start{(\tikztostart)}%
        \tikz@scan@one@point\grd@save@start\grd@@start\relax
        \draw[minor help lines] (\tikztostart) grid (\tikztotarget);
        \draw[major help lines] (\tikztostart) grid (\tikztotarget);
        \grd@start
        \pgfmathsetmacro{\grd@xa}{\the\pgf@x/1cm}
        \pgfmathsetmacro{\grd@ya}{\the\pgf@y/1cm}
        \grd@target
        \pgfmathsetmacro{\grd@xb}{\the\pgf@x/1cm}
        \pgfmathsetmacro{\grd@yb}{\the\pgf@y/1cm}
        \pgfmathsetmacro{\grd@xc}{\grd@xa + \pgfkeysvalueof{/tikz/grid with coordinates/major step}}
        \pgfmathsetmacro{\grd@yc}{\grd@ya + \pgfkeysvalueof{/tikz/grid with coordinates/major step}}
        \foreach \x in {\grd@xa,\grd@xc,...,\grd@xb}
        \node[anchor=north] at (\x,\grd@ya) {\pgfmathprintnumber{\x}};
        \foreach \y in {\grd@ya,\grd@yc,...,\grd@yb}
        \node[anchor=east] at (\grd@xa,\y) {\pgfmathprintnumber{\y}};
      }
    }
  },
  minor help lines/.style={
    help lines,
    step=\pgfkeysvalueof{/tikz/grid with coordinates/minor step}
  },
  major help lines/.style={
    help lines,
    line width=\pgfkeysvalueof{/tikz/grid with coordinates/major line width},
    step=\pgfkeysvalueof{/tikz/grid with coordinates/major step}
  },
  grid with coordinates/.cd,
  minor step/.initial=.2,
  major step/.initial=1,
  major line width/.initial=0.25mm,
}

%Table of Contents
\setcounter{tocdepth}{2}
\makeatletter
\def\l@subsection{\@tocline{2}{0pt}{2.5pc}{5pc}{}}

\DeclareMathOperator{\diag}{diag}
\DeclareMathOperator{\re}{Re}
\DeclareMathOperator{\im}{Im}
\DeclareMathOperator{\supp}{supp}

\newcommand{\res}{\mathop{\rm Res}}

\newcommand{\N}{\mathbb{N}}
\newcommand{\C}{\mathbb{C}}
\newcommand{\R}{\mathbb{R}}
\newcommand{\Z}{\mathbb{Z}}

\newcommand{\boh}{\mathit{o}}
\newcommand{\Boh}{\mathcal{O}}

\usepackage{todonotes}

\newtheorem{theorem}{Theorem}[section]
\newtheorem{prop}[theorem]{Proposition}
\newtheorem{lemma}[theorem]{Lemma}
\newtheorem{corollary}[theorem]{Corollary}

\theoremstyle{definition}

\theoremstyle{remark}
\newtheorem{remark}[theorem]{Remark} 
\numberwithin{equation}{section} 
 
% definition of the restriction function
\newcommand\restr[2]{{% we make the whole thing an ordinary symbol
		\left.\kern-\nulldelimiterspace % automatically resize the bar with \right
		#1 % the function
		\vphantom{\big|} % pretend it's a little taller at normal size
		\right|_{#2} % this is the delimiter
}}

\begin{document}

%%%%%%%%%%%%%%%%%%%%%%%%%%%%%%
\title{Supercritical Regime for the Kissing Polynomials}

\author[A.~Celsus]{Andrew F. Celsus}

\address[AC]{University of Cambridge, Cambridge, UK.}

\email{a.f.celsus@maths.cam.ac.uk }

\author[G.~Silva]{Guilherme L.~F.~Silva}

\address[GS]{{\it Corresponding Author}. Instituto de Ciências Matemáticas e de Computação, Universidade de São Paulo (ICMC - USP), São Carlos - SP, Brazil.} 

\email{silvag@usp.br}

\date{}

%%%%%%%%%%%%%%%%%%%%%%%%%%%%%

\begin{abstract}
We study a family of polynomials which are orthogonal with respect to the varying, highly oscillatory complex weight function $e^{ni\lambda z}$ on $[-1,1]$,  where $\lambda$ is a positive parameter. This family of polynomials has appeared in the literature recently in connection with complex quadrature rules, and their asymptotics have been previously studied when $\lambda$ is smaller than a certain critical value, $\lambda_c$. Our main goal is to compute their asymptotics when $\lambda>\lambda_c$. 

We first provide a geometric description, based on the theory of quadratic differentials, of the curves in the complex plane which will eventually support the asymptotic zero distribution of these polynomials. Next, using the powerful Riemann-Hilbert formulation of the orthogonal polynomials due to Fokas, Its, and Kitaev, along with its method of asymptotic solution via Deift-Zhou nonlinear steepest descent, we provide uniform asymptotics of the polynomials throughout the complex plane. 

Although much of this asymptotic analysis follows along the lines of previous works in the literature, the main obstacle appears in the construction of the so-called global parametrix. This construction is carried out in an explicit way with the help of certain integrals of elliptic type. In stark contrast to the situation one typically encounters in the presence of real orthogonality, an interesting byproduct of this construction is that there is a discrete set of values of $\lambda$ for which one cannot solve the model Riemann-Hilbert problem, and as such the corresponding polynomials fail to exist.
\end{abstract}

\keywords{Orthogonal polynomials in the complex plane; Riemann-Hilbert problems; zero distribution; strong asymptotics; steepest descent method, Boutroux condition, quadratic differentials}

\vspace*{-1.6cm}

\maketitle

\tableofcontents

%--------------------------------------------------------------------------
% Introduction
%--------------------------------------------------------------------------
\section{Introduction}
	Polynomial sequences satisfying non-Hermitian or complex orthogonality conditions originally appeared in the literature due to their use in approximation theory \cite{aptekarev1992asymptotics,gonchar1989equilibrium,nikishin1991rational}, but have recently been used in their connection to theoretical physics and random matrix theory \cite{alvarez2013determination,alvarez2014partition,alvarez2015fine,bertola2011boutroux} and Painlev\'e equations \cite{balogh2016hankel,bertola_tovbis_2015,bertola2014zeros}, to mention only a few.

	In the context of the present work, the motivation for studying these polynomials comes from the problem of constructing quadrature rules for oscillatory integrals of the form
	\begin{equation}\label{eq: oscillatory integral}
	I_\omega\left[f\right]:= \int_{-1}^{1} f(z) e^{i\omega z}\, dz. 
	\end{equation}
	It was shown by Asheim, Deaño, Huybrechs and Wang \cite{asheim2012gaussian} that by using the zeros of the monic polynomial $p_n^\omega$ of degree $n$ satisfying the orthogonality conditions,
	\begin{equation}\label{eq: regular kissing polys}
	\int_{-1}^1 p_n^\omega(z) z^k e^{i\omega z}\, dz = 0, \qquad k = 0, 1, \dots, n-1,
	\end{equation}
	one could construct a quadrature rule for \eqref{eq: oscillatory integral} that not only attained high asymptotic order as $\omega \to \infty$, but also reduced elegantly to traditional Gauss-Legendre quadrature as $\omega \to 0$. We mean high asymptotic order in the following sense. Letting $\{x_j\}_{j=1}^{2n}$ be the zeros of $p_{2n}^\omega$ and $\{w_j\}_{j=1}^{2n}$ be the associated Gaussian Quadrature weights, \cite[Theorem~4.1]{asheim2012gaussian} tells us that for analytic $f$ we have the following asymptotic decay,
	\begin{equation}
		\sum_{j=1}^{2n} w_j f\left(x_j\right) - \int_{-1}^1 f(x)e^{i\omega x}\, dx = \mathcal{O}\left(\frac{1}{\omega^{2n+1}}\right), \qquad \omega\to\infty.
	\end{equation}
	This is indeed high order, especially when compared with typical asymptotic methods (such as Filon methods) which attain $\mathcal{O}\left(\omega^{-n-1}\right)$ as $\omega\to\infty$ using the same amount of information. More details on this form of complex Gaussian quadrature, along with other numerical methods for the treatment of highly oscillatory integrals, can be found in the recent monograph of Dea\~no, Huybrechs, and Iserles \cite{quad_book}. As a result of this work, it became an important question in numerical analysis to understand the zero behavior of the sequence of polynomials $(p_n^\omega)$, in particular under various possible asymptotic regimes in the parameter $\omega$ and also in the degree $n$.
	
	Although $(p_n^\omega)$ was, as mentioned, first studied in \cite{asheim2012gaussian}, it was Deaño, Huybrechs, and Iserles \cite{deano2015kissing} who dubbed them the \say{Kissing Polynomials} due to the patterns formed by their zero trajectories, see Figure~\ref{fig: kissing}. In the same work, they showed that for fixed degree and as $\omega \to \infty$, the zeros of the kissing polynomials tend to $\pm 1$. 
	\begin{figure}[t]
		\centering
		\begin{subfigure}[b]{0.32\linewidth}
			\includegraphics[width=\linewidth]{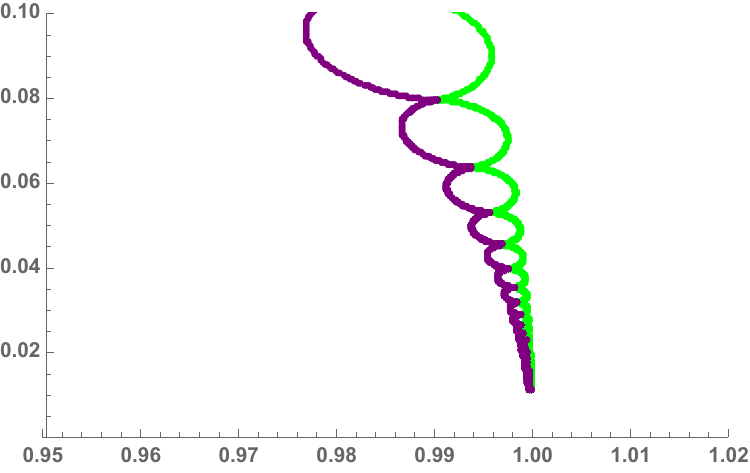}
			\caption*{$n = 2,3$}
			\label{fig: kissing 23}
		\end{subfigure}
		\begin{subfigure}[b]{0.32\linewidth}
			\includegraphics[width=\linewidth]{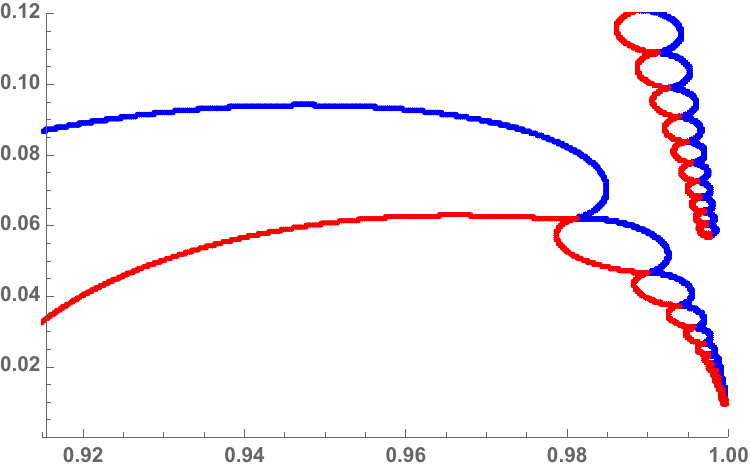}
			\caption*{$n = 4,5$}
			\label{fig: kissing 45}
		\end{subfigure}
		\caption{Zeros trajectories of the degree 2 (green), 3 (purple), 4 (blue), 5 (red) Kissing Polynomials as $\omega$ varies, zoomed in to the end point $+1$. The trajectories for polynomials of consecutive degree touch or ``kiss'', hence the name, each other.}
		\label{fig: kissing}
	\end{figure}
	The asymptotic analysis of $p^\omega_n$ for fixed $\omega$ and as $n \to \infty$ can be found in the appendix of \cite{deano2014large} or by using the techniques of \cite{kuijlaars2004riemann}, where one can show that the zeros of the kissing polynomials accumulate on the interval $[-1,1]$. 
	
	One can also let $\omega$ depend on $n$. To get to a nontrivial new limit, one sets $\omega = \omega(n) = \lambda n$ in \eqref{eq: regular kissing polys} which leads to the formulation of the varying weight kissing polynomials, which we formally reintroduce by
	\begin{equation}\label{eq: varying weight kissing polynomials}
	\int_{-1}^{1} p_n^\lambda(z) z^k e^{-nV(z)}\, dz = 0, \qquad k = 0, 1, \dots, n-1,\quad V(z):=-i \lambda z.
	\end{equation}
	Thus, studying the behavior of the kissing polynomials as $\omega$ and $n$ go to infinity at the rate $\lambda$ is equivalent to studying the large $n$ behavior of the varying weight kissing polynomials defined by \eqref{eq: varying weight kissing polynomials}. We stress that $p_n^\omega$ and $p_n^\lambda$ are not the same polynomials although they are easily related through a rescaling.
	
	In \cite{deano2014large}, Deaño studied the large degree asymptotics for $(p_n^\lambda)$, showing that for $\lambda<\lambda_c$ the zeros of $p_n^\lambda$ accumulate on a single analytic arc connecting $-1$ and $1$. As shown in \cite{deano2014large}, the critical value $\lambda_c$ (numerically, $\lambda_c \approx 1.32549$) is the unique positive solution to the equation
	\begin{equation}\label{eq: lambda crit defn}
	2 \log\left(\frac{2+\sqrt{\lambda^2+4}}{\lambda}\right) - \sqrt{\lambda^2+4}=0. 
	\end{equation}
	Deaño also noted that for $\lambda > \lambda_c$, the zeros of $p_n^\lambda$ seem to accumulate on two disjoint arcs in the complex plane, as illustrated in Figure~\ref{fig: intro pic}.
	
	\begin{figure}[t]
		\centering
		\begin{subfigure}[b]{0.32\linewidth}
			\includegraphics[width=\linewidth]{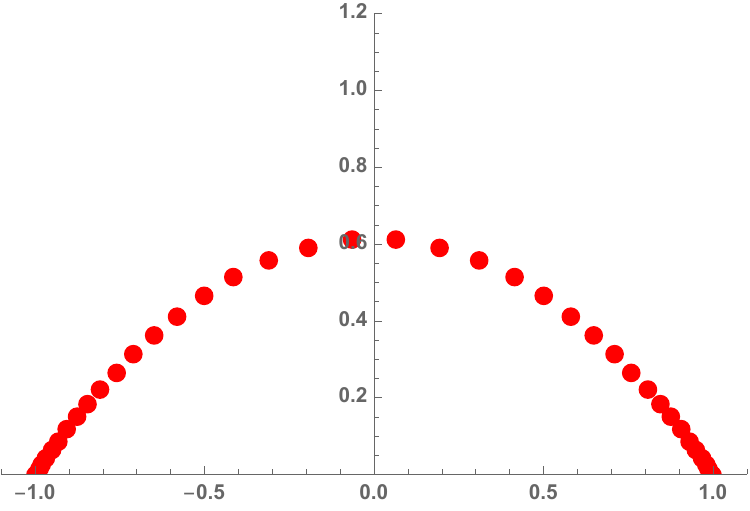}
			\caption*{$\lambda = 1$}
			\label{fig: n40 lam 1}
		\end{subfigure}
		\begin{subfigure}[b]{0.32\linewidth}
			\includegraphics[width=\linewidth]{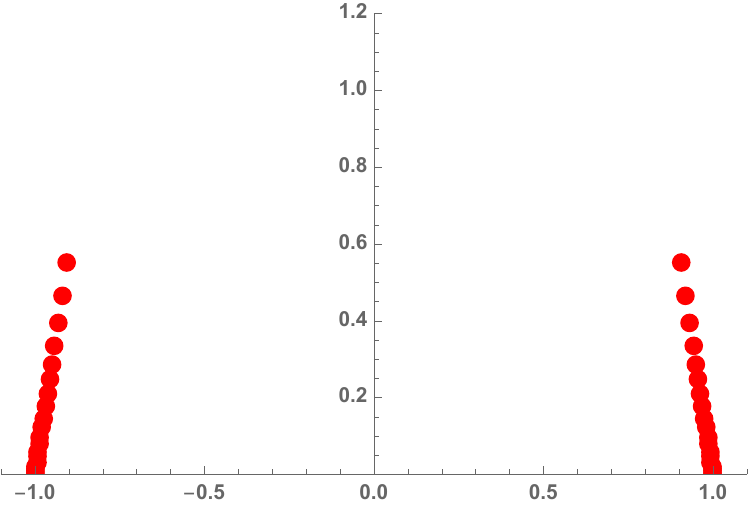}
			\caption*{$\lambda = 3$}
			\label{fig: n40 lam 3}
		\end{subfigure}
		\begin{subfigure}[b]{0.32\linewidth}
			\includegraphics[width=\linewidth]{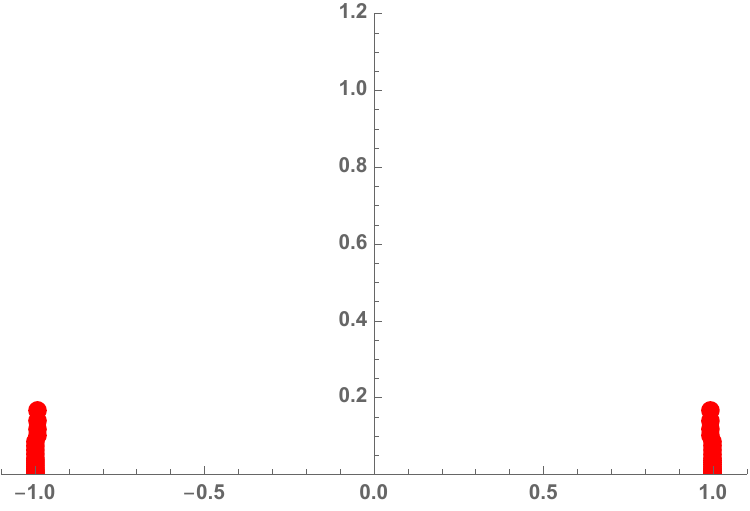}
			\caption*{$\lambda = 10$}
			\label{fig: n40 lam 10}
		\end{subfigure}
		\caption{Zeros of $p_{40}^\lambda(z)$ for $\lambda=1,3,10$.}
		\label{fig: intro pic}
	\end{figure}
	
	The goal of this paper is to prove that the situation depicted in Figure~\ref{fig: intro pic} is correct. Therefore, we will show that for $\lambda > \lambda_c$, the zeros of the varying weight kissing polynomials do indeed accumulate on two disjoint arcs, one emanating from $-1$ and the other emanating from $+1$. These arcs turn out to be analytic, and we will describe them precisely. We will also provide strong asymptotic formulas for $p_n^\lambda$ in the complex plane. By studying the supercritical regime, we are thus completing the picture regarding non-trivial asymptotics of the kissing polynomials. 

%--------------------------------------------------------------------------
% Statement of Main Results
%--------------------------------------------------------------------------
\section{Statement of Main Results}

	In this section, we provide the necessary background on complex orthogonality, setting up the notation used throughout this text, and discuss our main findings. 
	
	It should first be noted that due to the complex-valued nature of the weight function in \eqref{eq: varying weight kissing polynomials}, questions such as existence of the polynomials $p_n^\omega$ and bounds for the location of their zeros, which are taken for granted when dealing with real, positive weight functions, are no longer known a priori. Indeed, these questions were considered by Deaño, Huybrechs and Iserles in their study of the kissing polynomials \cite{deano2015kissing}. 
	
%	On the other hand, the usual algebraic aspects that hold for the classical orthogonal polynomials continue to hold in our present setting. In particular, the monic kissing polynomials still satisfy the famous three term recurrence relation
%	\begin{equation}\label{eq: ttrr}
%	z p_n^\omega(z) = p_{n+1}^\omega (z) + b_n p_n^\omega(z) + a_n^2 p_{n+1}^\omega(z),
%	\end{equation}
%	%
%	provided of course that the polynomials $p_n^\omega$ and $p_{n\pm 1}^\omega$ are well defined. Here, it is clear that the recurrence coefficients $a_n$ and $b_n$ depend on the oscillation parameter $\omega$, but we have omitted this dependence for notational clarity. The relation continues to hold in our setting of interest where $\omega = \omega(n) = \lambda n$, provided we make the appropriate shifts of the parameters in \eqref{eq: ttrr}. 
	
	As everything in the integrand of \eqref{eq: varying weight kissing polynomials} is analytic, we have complete freedom when choosing the path of integration connecting $-1$ and $+1$ in the orthogonality conditions \eqref{eq: varying weight kissing polynomials}. On the other hand, accounting for the asymptotic behavior of $p_n^\lambda$ as $n\to\infty$, and in particular its zeros, it is expected that there exists a distinguished curve of orthogonality along which the asymptotic behavior of $p_n^\lambda$ changes depending on whether $z$ belongs to this curve or not. This curve should be the one where the zeros of $p_n^\lambda$ asymptotically lie, as depicted in Figure~\ref{fig: intro pic}.
	
	The rigorous study of this intuitive notion of the \say{correct} curve to choose from was initiated by Nuttall, who conjectured in a non-varying situation that such correct curve should be of minimal capacity (see for instance the later work of Nuttall and Singh \cite{nuttall_singh}). Nutall's conjecture was later established rigorously by Stahl in a series of seminal works \cite{stahl1985extremal,stahl1986orthogonal}. Roughly, the idea is that for a fixed allowable curve of orthogonality $\gamma$, one first minimizes an energy functional over probability measures defined on $\gamma$, and then maximizes this minimum over all possible $\gamma$. The curve $\gamma=\gamma_S$ for which this max-min is attained, known after the works of Stahl by the name of S-curve, is then the one that attracts the zeros of the corresponding large degree orthogonal polynomials. Moreover, the probability measure that minimizes the energy functional on $\gamma_S$ governs the limiting distribution of the zeros. For the interested reader, we also recommend the more recent work \cite{aptekarevyattselev2015} by Aptekarev and Yattselev, where they revisit Nuttall's conjecture under a more modern perspective, establishing strong asymptotics of non-hermitian orthogonal polynomials under some additional geometric conditions.
	
	The original works of Stahl mentioned above were concerned with non-hermitian orthogonality with non-varying weights. In an attempt to extend Stahl's results to account for varying weights, Gonchar and Rakhmanov \cite{gonchar1989equilibrium} obtained the asymptotic zero distribution of a certain class of non-hermitian orthogonal polynomials with varying weights, but took the existence of the associated S-contour for granted. The problem of existence of the S-contour, not only in the context of Gonchar and Rakhmanov's original work but also in other contexts, remained open until not so long ago, when Rakhmanov \cite{rakhmanov2012orthogonal} outlined a very general max-min approach for obtaining S-contours. The rigorous analysis of this approach, nowadays called the Gonchar-Rakhmanov-Stahl program, depends heavily on the type of orthogonality weight and the geometry of the contours at hand, but nevertheless has been carried out in various different settings \cite{aptekarevyattselev2015,kuijlaars_silva_2015,martinezrakhmanov2011,martinez2016orthogonal,yattselev2018}, many of which were largely inspired by Rakhmanov's outline in \cite{rakhmanov2012orthogonal}.
	
	Despite the many works on S-contours already available in the literature, a rigorous proof of the existence of such a contour adapted to the context of the orthogonality as in \eqref{eq: varying weight kissing polynomials} and arbitrary polynomial potential $V$ remains an open question. By virtue of the simplicity of our choice of potential $V$ in \eqref{eq: varying weight kissing polynomials}, instead of trying to obtain the S-contour rigorously through a max-min approach, we use ad hoc calculations inspired by these max-min techniques to find an educated guess for the appropriate form of our S-contour. The main output of these calculations is an expression for the so-called spectral curve, from which we construct a $g$-function that should ultimately describe the leading exponential asymptotics of the polynomials of large degree. Moving forward, we use this $g$-function to implement the Riemann-Hilbert approach that finally provides the rigorous asymptotics of the orthogonal polynomials.
	
Much in virtue of the connection with random matrices and other mathematical physics' models, asymptotics of orthogonal polynomials, both hermitian but also non-hermitian, have experimented an emerging of new ideas and techniques. In contrast to the potential-theoretic techniques by Stahl, Gonchar and Rakhmanov, among others, just described, another possible approach to obtain the $g$-function that governs the leading asymptotics of orthogonal polynomials is through deformations techniques. In short terms, the main idea is to see this $g$-function as being deformed with the parameter $\lambda$, and keep track of this deformation in the parameter space $\lambda$. These ideas have shown to be quite powerful in the context of integrable systems for a little while, see for instance the monograph by Kamvissis, McLaughlin  and Miller \cite{kamvissis_mclaughlin_miller} and the references therein, but have more recently also come to play in the theory of orthogonal polynomials, notably by Bertola and Tovbis \cite{bertola2011boutroux,bertola_tovbis_2015,bertola_tovbis_2016}. 

In contrast with the potential-theoretic approach, where one constructs the $g$-function with the solution of a variational problem, over here one finds a scalar Riemann-Hilbert problem for the $g$-function (or yet its derivative), where the number of ``cuts'' (or the genus of the underlying Riemann surface) is also to be determined. Nevertheless, once one proves that for a given number of cuts and a specific parameter value $\lambda=\lambda_0$ this scalar Riemann-Hilbert problem yields the correct $g$-function, one then has to deform $\lambda$, keeping track of all the level lines for $\re g$. The critical transition in the parameter space $\lambda$ then appears as a change in behavior of these level lines, and past those critical points one possibly has to adapt the number of cuts one started with. In the recent work \cite{bertola_tovbis_2016}, for instance, Bertola and Tovbis carry out this approach for a quartic potential, but many of their technical results are actually valid for general potential when the contour of orthogonality does not have finite endpoints. Such approach could in principle be adapted to our context here, where we have the endpoints $\pm 1$, as well. In fact, many of the aspects of the present work are comparable, for instance the need of control for the level lines of $\re g$ is, to some extent, similar with our analysis of trajectories of quadratic differentials in Section~\ref{sec: trajectories} below, and \eqref{eq: spectral curve 1} below is equivalent to the Riemann-Hilbert problem for the derivative of the $g$-function. 

Under the perspective of the matrix Riemann-Hilbert approach for the orthogonal polynomials, either the potential-theoretic approach or the deformations in the parameter space technique share in common that they do not provide the strong asymptotics of the orthogonal polynomials directly. Instead, they do provide the key object, the $g$-function, that enters as an input of the Riemann-Hilbert analysis and with which one is then able to apply the Deift-Zhou steepest descent method.
	
	Following along the lines of the max-min approach used in \cite{kuijlaars_silva_2015,martinezrakhmanov2011}, see also \cite{deano2014large} for related calculations, we expect that the weak limit of the normalized zero counting measure for $p_n^\lambda$, say a measure $\mu_*$, should verify a quadratic equation of the form
	\begin{equation}\label{eq: spectral curve 1}
	\left(\int \frac{d\mu_*(s)}{s-z} + \frac{V'(z)}{2}\right)^2=Q(z),\quad z\in \C\setminus \supp\mu_*,
	\end{equation}
	where $Q$ is a rational function to be determined, whose only singularities are simple poles at $\pm 1$ (so as to encode that the endpoints of integration in \eqref{eq: varying weight kissing polynomials} are $\pm1$). A comparison of both sides of this quadratic equation implies that
	\begin{equation}\label{eq: branch square root}
	Q^{1/2}(z) = -\frac{i\lambda}{2} - \frac{1}{z} + \mathcal{O}\left(\frac{1}{z^2}\right), \qquad z \to \infty, 
	\end{equation}
	so $Q$ should take the form
	\begin{equation}\label{def: rational Q}
	Q(z)=Q(z;\lambda,x)=-\frac{\lambda^2}{4}\frac{(z-z_\lambda(x))(z+\overline{z_\lambda(x)})}{z^2-1}, 
	\end{equation}	
	for
	\begin{equation}\label{def: z lambda}
	z_\lambda(x) = x + \frac{2i}{\lambda},
	\end{equation}
	and $x \in \mathbb{R}$ yet to be chosen appropriately.
	
	For the choice $x=0$ the polynomial $Q$ has a double zero at $z_\lambda(0)$, and the Riemann surface associated to the equation
	\begin{equation}\label{eq: spectral curve}
	\xi^2=Q(z),
	\end{equation}
	for which 
	$$
	\xi=\xi_1(z)=\int \frac{d\mu_*(s)}{s-z} + \frac{V'(z)}{2}
	$$
	should be a solution to, has genus $0$. This genus ansatz yields the correct guess of an appropriate $Q$ for $\lambda<\lambda_c$, and it is consistent with the numerical observation that for $\lambda<\lambda_c$, the zeros of $p_n^\lambda$ accumulate on a single analytic arc, as proven in the aforementioned work \cite{deano2014large}. In the same work, Deaño also indicated that $x=0$ should not be the correct choice for $\lambda>\lambda_c$. In light of the numerical outputs in Figure~\ref{fig: intro pic}, which indicate that for $\lambda>\lambda_c$ the zeros accumulate on two disjoint arcs, we actually expect that the Riemann surface associated to $Q$ as in \eqref{eq: spectral curve} has genus $1$. This means that $Q$ must have simple zeros for $\lambda>\lambda_c$, so we must expect that the correct choice $x=x_*$ must satisfy $x_*>0$.
	
	The determination of $x_*$ in the supercritical regime is our first contribution.
	
	\begin{theorem}\label{thm: boutroux condition}
		Suppose that $\lambda>\lambda_c$ and define $z_\lambda$ as in \eqref{def: z lambda}. Then there exists a unique choice $x=x_*=x_*(\lambda)\in (0,1)$ for which
		\begin{equation}\label{eq: boutroux condition}
		\re \int_{z_\lambda(x_*)}^{1}\sqrt{Q(s)}ds=0.
		\end{equation}
		Furthermore,
		$$
		\lim_{\lambda\to +\infty} x_*(\lambda) =1.
		$$
	\end{theorem}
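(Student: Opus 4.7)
The strategy is to treat
\begin{equation*}
F(x;\lambda):=\int_{z_\lambda(x)}^{1}\sqrt{Q(s;\lambda,x)}\,ds, \qquad h(x;\lambda):=\re F(x;\lambda),
\end{equation*}
as a continuous, real-analytic function of $x\in(0,1)$, and deduce existence and uniqueness of the zero $x_*$ via a sign change combined with monotonicity in $x$. The branch of $\sqrt{Q}$ is fixed by \eqref{eq: branch square root} and the contour of integration is taken along the corresponding branch cut from $z_\lambda(x)$ to $1$ in the upper half-plane. Both endpoints contribute integrable square-root singularities, hence $F$ is continuous on $[0,1]$ and real-analytic on $(0,1)$.

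The sign of $h$ at $x=0$ can be read off in closed form. Indeed $Q(z;\lambda,0)=-\tfrac{\lambda^2}{4}(z-2i/\lambda)^2/(z^2-1)$ has a perfect square in the numerator, so $\sqrt{Q(s;\lambda,0)}$ is rational up to $\sqrt{s^2-1}$, and elementary primitives yield
\begin{equation*}
h(0;\lambda)=\tfrac{1}{2}\!\left(2\log\tfrac{2+\sqrt{\lambda^2+4}}{\lambda}-\sqrt{\lambda^2+4}\right).
\end{equation*}
This is half of the function in \eqref{eq: lambda crit defn}, and a one-line derivative calculation shows it is strictly decreasing in $\lambda$, so $h(0;\lambda)<0$ whenever $\lambda>\lambda_c$. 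At the other endpoint the integration path shrinks to length $O(1/\lambda)$; substituting $s=1+t$ and expanding $\sqrt{Q}$ locally around $s=1$ delivers a closed-form leading contribution whose real part is strictly positive, i.e.\ $h(1;\lambda)>0$. Continuity together with the intermediate value theorem then produces at least one $x_*\in(0,1)$ satisfying \eqref{eq: boutroux condition}.

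Uniqueness reduces to strict monotonicity of $h$ in $x$. Since $Q(z_\lambda(x);\lambda,x)=0$ the boundary term in the Leibniz rule vanishes, and $\partial_x Q(z;\lambda,x)=\lambda^2 x/[2(z^2-1)]$ gives
\begin{equation*}
F'(x;\lambda)=\tfrac{i\lambda x}{2}\int_{z_\lambda(x)}^{1}\frac{ds}{\sqrt{(s^2-1)(s-z_\lambda(x))(s+\overline{z_\lambda(x)})}},
\end{equation*}
an incomplete elliptic integral on the spectral curve $\xi^2=Q(z)$, whose four branch points are $\pm 1$, $z_\lambda(x)$ and $-\overline{z_\lambda(x)}$. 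Exploiting the symmetry $Q(-\bar s;\lambda,x)=\overline{Q(s;\lambda,x)}$, one deforms the contour onto a natural basis of cycles on this elliptic curve: on one cycle the integrand contributes only to $\re F'$ and on the other only to $\im F'$, and a careful bookkeeping of signs shows $\re F'(x;\lambda)>0$ on $(0,1)$.

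Finally, for the limit as $\lambda\to\infty$, the explicit formula gives $h(0;\lambda)\sim -\lambda/2\to-\infty$, and a similar analysis at fixed $x\in[0,1)$ (noting that for large $\lambda$ the leading $O(\lambda)$ contribution to $\sqrt{Q}$ integrates to a negative real quantity along the relevant path) shows $h(x;\lambda)\to-\infty$ for every fixed $x\in[0,1)$. Combined with the strict $x$-monotonicity established above, this forces $x_*(\lambda)\to 1$. The main obstacle is the monotonicity step: the elliptic integral defining $F'(x;\lambda)$ is not manifestly of positive real part, and establishing this sign requires careful tracking of the branch cuts of the spectral curve and of how the integration contour traverses them.
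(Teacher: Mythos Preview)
Your existence argument matches the paper's: it computes $h(0;\lambda)$ in closed form (yielding exactly half of the expression in \eqref{eq: lambda crit defn}), shows $h(1;\lambda)>0$, and concludes by the intermediate value theorem. The paper's treatment of $h(1;\lambda)>0$ is considerably more delicate than your sketch suggests, however: it performs the change of variables $s\mapsto i(s-1)$, rewrites the integrand as $R^{1/2}$ for an explicit rational $R$, and then tracks the argument of $R^{1/2}$ along the contour to show $\re R_+^{1/2}<0$ pointwise. Your ``local expansion near $s=1$'' would need to be turned into something equally careful.

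The substantive divergence is in uniqueness. The paper does \emph{not} establish monotonicity of $h$ in $x$; it explicitly declines to prove uniqueness directly. Instead it argues indirectly: the entire Riemann--Hilbert asymptotic analysis requires only existence of some $x_*$, and once one knows that \emph{any} admissible $x_*$ produces the weak zero limit \eqref{eq: weak convergence zeros}, uniqueness of the limiting measure forces uniqueness of $x_*$. Your monotonicity route via the elliptic integral for $F'(x;\lambda)$ is thus genuinely different. If it could be completed it would be more self-contained, but you correctly flag that the sign of $\re F'$ is not manifest, and you do not actually establish it; as written, this is a gap. (It is worth noting that the integral you wrote is essentially a half-period of the holomorphic differential on the spectral curve, and the paper later records reality properties of such periods in \eqref{eq: periods holomorphic}; these could conceivably be leveraged, but the bookkeeping of branch cuts and contour sides is nontrivial, and the paper evidently chose to avoid it.)

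Finally, your argument for $x_*(\lambda)\to 1$ leans on the unproven monotonicity. The paper instead performs a large-$\lambda$ expansion of $\psi(x)$ uniformly for $x\in(0,1)$, showing that the leading term is $\lambda$ times a nonzero purely imaginary integral multiplied by $(x-1)$; the condition $\psi(x_*)=0$ then forces $x_*\to 1$ directly, without any appeal to monotonicity.
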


With some effort, the limit above could be complemented with the limit
$$
\lim_{\lambda\to\lambda_c^+} x_*(\lambda)=0,
$$	
see for instance Remark~\ref{rmk:uniqueness_x} below.
	
In the literature, the transcendental condition \eqref{eq: boutroux condition} goes by the name of the {\it Boutroux condition} \cite{bertola2011boutroux,bertola2009commuting,bertola_tovbis_2015,kuijlaars_tovbis_2015}. We remark that this condition does not depend on the choice of branch of the square root, as long as it varies analytically along the contour of integration. We are implicitly assuming this fact when we write \eqref{eq: boutroux condition}. In a moment we will fix a branch of this root that will be used throughout the rest of this introduction.
	
	Once the function $Q$ in \eqref{eq: spectral curve 1} is determined, if we knew $\supp\mu_*$, then we could simply use Plemelj's formula in \eqref{eq: spectral curve 1} to recover the density of $\mu_*$. Even if we did not know $\supp\mu_*$, this observation would still be useful, as it imposes a constraint on what $\supp\mu_*$ could potentially look like. Indeed, assuming that $\supp\mu_*$ is a union of arcs, using Plemelj's formula in \eqref{eq: spectral curve 1}, we see that $\mu_*$ should verify the relation $\pi i\; d\mu_*(s)=Q^{1/2}_+ds$, where $ds$ is the complex line element along the arcs of $\supp\mu_*$. In particular, this formal calculation indicates that $Q_+^{1/2}ds$ has to be purely imaginary along the support of $\mu_*$, therefore imposing a severe restriction on what types of arcs are good candidates for $\supp\mu_*$.
	
	In fact, our next two results transform these ideas into rigorous statements. We first provide the existence of the analytic arcs that will support the limiting zero distribution of $p_n^\lambda$. 
	
	\begin{theorem}\label{thm: support zero distribution}
		Take $Q$ and $z_*:=z_\lambda(x_*)$ as in \eqref{def: rational Q} and \eqref{def: z lambda}, where $x_*$ is defined as in Theorem~\ref{thm: boutroux condition}. Then there exist analytic arcs $\gamma_1$ and $\gamma_2$ with the following properties.
		\begin{enumerate}[(i)]
			\item The arc $\gamma_2$ is on the right half plane, starts at $z_*$ and ends at $1$, and it is the unique such arc that satisfies
			\begin{equation}\label{eq: pretrajectory1}
			\int_{z_*}^z \sqrt{Q(s)}ds\in i\R,\quad z\in \gamma_2.
			\end{equation}
			\item The arc $\gamma_1$ is obtained as the reflection of $\gamma_2$ over the imaginary axis and satisfies
			\begin{equation}\label{eq: pretrajectory2}
			\int_{-1}^z \sqrt{Q(s)}ds\in i\R,\quad z\in \gamma_1.
			\end{equation}
		\end{enumerate}
	\end{theorem}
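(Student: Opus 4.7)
The plan is to realize $\gamma_1$ and $\gamma_2$ as vertical (orthogonal) trajectories of the quadratic differential $Q(z)\,dz^2$, that is, as maximal arcs on which $\sqrt{Q(z)}\,dz \in i\R$, equivalently level curves of $\re F$ where $F(z) := \int_{z_*}^z \sqrt{Q(s)}\,ds$. For $Q$ as in \eqref{def: rational Q} with $x = x_*$, the critical points of $Q\,dz^2$ are the two simple zeros $z_*$ and $-\overline{z_*}$, the simple poles $\pm 1$, and a pole of order four at $\infty$ (since $Q(z) \to -\lambda^2/4$). Standard local theory of quadratic differentials tells us that three vertical trajectories leave each simple zero at equal angles $2\pi/3$, exactly one vertical trajectory approaches each simple pole along a prescribed tangent, and the trajectories near $\infty$ split into asymptotic sectors in which they escape.

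Once a branch of $\sqrt{Q}$ satisfying \eqref{eq: branch square root} is fixed (with cuts to be placed a posteriori along the arcs themselves), the Boutroux condition \eqref{eq: boutroux condition} reads $\re F(1) = 0$. Hence $z_*$ and $1$ belong to the same connected component of $\{\re F = 0\}$, and this component, away from critical points, is precisely a union of vertical trajectories. This already produces a piecewise trajectory arc joining $z_*$ to $1$.

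The core technical step, and the step where I expect the real work to concentrate, is to identify, among the three vertical trajectories leaving $z_*$, the unique one lying in the right half-plane and terminating at $1$. For this I would analyze the full critical graph of $Q\,dz^2$, leveraging the symmetry $Q(-\overline{z}) = \overline{Q(z)}$ under reflection across the imaginary axis. A direct sign analysis along $i\R$ identifies which critical trajectories sit on it, and then standard global facts about trajectories of meromorphic quadratic differentials on the sphere (notably that in our configuration no vertical trajectory closes, since a bounded region so enclosed would need to contain a pole of order at least two, and that exactly one vertical trajectory flows into each simple pole) force the right-half-plane trajectory from $z_*$ to end at $1$; the Boutroux condition then matches the required imaginary parameter length. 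Uniqueness of $\gamma_2$ follows because any arc satisfying \eqref{eq: pretrajectory1} is contained in the single level component of $\re F$ through $z_*$ and $1$, and trajectories are locally unique at smooth points.

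Part (ii) is immediate from the reflection symmetry. The involution $z \mapsto -\overline{z}$ sends vertical trajectories of $Q\,dz^2$ to vertical trajectories, and exchanges $z_*$ with $-\overline{z_*}$ and $1$ with $-1$; hence the reflection $\gamma_1$ of $\gamma_2$ is a vertical trajectory in the left half-plane from $-\overline{z_*}$ to $-1$, and \eqref{eq: pretrajectory2} follows from \eqref{eq: pretrajectory1} together with the Boutroux condition $\int_{z_*}^1 \sqrt{Q(s)}\,ds \in i\R$, transported through the involution.
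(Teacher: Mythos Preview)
Your framework matches the paper's: both realize $\gamma_2$ as a critical trajectory of $-Q\,dz^2$ (equivalently a vertical trajectory of $Q\,dz^2$), use the local structure at simple zeros and poles, invoke the Boutroux condition, and obtain $\gamma_1$ by reflection. The uniqueness argument and part~(ii) are essentially those of the paper.

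However, the crucial step is not adequately argued. The sentence ``Hence $z_*$ and $1$ belong to the same connected component of $\{\re F=0\}$'' does not follow: knowing $\re F(z_*)=\re F(1)=0$ says nothing about connectivity of the zero level set, and in any case $F$ is only well defined once branch cuts are fixed, which you propose to place along the very arcs you are trying to construct. Your subsequent appeal to ``standard global facts'' (no closed trajectories without an enclosed pole of order $\geq 2$, one trajectory into each simple pole) does not by itself pin down the critical graph. Several a~priori configurations remain consistent with these facts: for instance, two of the three trajectories from $z_*$ could cross $i\R$ and connect to $-\overline{z_*}$, or the unique trajectory from $1$ could go to $\infty$ rather than to $z_*$. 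The paper devotes four lemmas (Lemmas~\ref{lem: trajectories 1}--\ref{lem: trajectories 4}) to ruling out precisely these alternatives, each time by a contradiction argument based on Teichm\"uller's formula~\eqref{teichmuller_formula} or on the strip/ring decomposition of trajectory domains, and it is only at that stage that the Boutroux condition is used---not to assert connectivity of a level set, but to derive a numerical contradiction ($\re\Upsilon(1)=0$ versus $\re\Upsilon(1)\neq 0$, or $|F(z_*)|=1$ versus $|F(z_*)|>1$) in the hypothetical bad configurations. Your outline correctly identifies this as ``the step where the real work concentrates'', but the sketch you give does not close the argument.

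A minor point: a closed trajectory bounds a disk with $\sum\eta(p)=-2$, which could be achieved by two simple poles (and we have two, at $\pm 1$), so your parenthetical ``pole of order at least two'' is not quite right.
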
	
	
	In the same spirit as \eqref{eq: boutroux condition}, the conditions \eqref{eq: pretrajectory1}--\eqref{eq: pretrajectory2} do not depend on the choice of the branch of the root, as long as this choice varies analytically along the contour of integration. But now it is a good time to define, once and for all, the branch of the root that we will be dealing with for the rest of this section.
	
	To do so, we orient the arcs $\gamma_1$ and $\gamma_2$ from $-1$ to $-\overline z_*$ and from $z_*$ to $1$, respectively, and for convenience set $\gamma=\gamma_1\cup\gamma_2$. With the orientation inherited from $\gamma_1$ and $\gamma_2$, the arc $\gamma$ has natural $\pm$-sides. The rational function $Q$ has a well-defined analytic square root on $\C\setminus \gamma$, that we choose in such a way that the asymptotic expansion \eqref{eq: branch square root} holds true. For $z\in \gamma$, we denote by $Q^{1/2}_\pm(z)$ the boundary values of this square root of $Q$ when we approach $\gamma$ from its $\pm$-side.
	
	The next result assures the existence of a positive measure $\mu_*$, whose Cauchy transform solves \eqref{eq: spectral curve}. This measure, as formally stated in a moment, will turn out to be the limiting zero distribution of the kissing polynomials with varying weight.
	
	\begin{theorem}\label{thm: density zero distribution}
		Suppose $\lambda>\lambda_c$ and let $Q$ be defined as in \eqref{def: rational Q} with the choice of $x_*$ given by Theorem~\ref{thm: boutroux condition}. Define a complex-valued measure $\mu_*$ on $\gamma$ through its density w.r.t. the complex line element $ds$ as
		$$
		d\mu_*(s)=\frac{1}{\pi i}Q^{1/2}_+(s)ds,\quad s\in \gamma.
		$$
		Then $\mu_*$ is, in fact, a probability measure on $\gamma$, and its shifted Cauchy transform 
		$$
		\xi_1(z)=C^{\mu_*}(z)- \frac{i\lambda}{2},\quad C^{\mu_*}(z):=\int \frac{d\mu_*(s)}{s-z},\quad  z\in \C\setminus \gamma,
		$$
		solves \eqref{eq: spectral curve} for $z\in \C\setminus \gamma$,
	\end{theorem}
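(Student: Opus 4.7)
The plan is to verify in sequence three properties of $\mu_*$ --- real-valuedness, total mass equal to one, and positivity of the density --- and then to identify the shifted Cauchy transform $\xi_1$ with $Q^{1/2}$ via Liouville's theorem. The trajectory conditions from Theorem~\ref{thm: support zero distribution} and the prescribed asymptotic expansion~\eqref{eq: branch square root} will do almost all of the work.

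First, I would address the measure itself. Differentiating \eqref{eq: pretrajectory1}--\eqref{eq: pretrajectory2} along $\gamma$ shows that $\sqrt{Q(s)}\,ds$ is purely imaginary on each arc (for some continuous branch of the root), and since the boundary value $Q_+^{1/2}(s)$ differs from this continuous branch only by a sign, $\frac{1}{\pi i}Q_+^{1/2}(s)\,ds$ is a real signed measure on $\gamma$. To compute the total mass, I would use contour deformation: since $Q^{1/2}$ is analytic on $\C\setminus\gamma$ with $Q_+^{1/2}+Q_-^{1/2}=0$ on $\gamma$, shrinking a large counterclockwise circle $\{|z|=R\}$ onto $\gamma$ gives
\[
-2\int_\gamma Q_+^{1/2}(s)\,ds \;=\; \oint_{|z|=R} Q^{1/2}(z)\,dz \;=\; -2\pi i,
\]
where the last equality comes from reading off the coefficient of $1/z$ in~\eqref{eq: branch square root}. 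Hence $\int_\gamma d\mu_*=1$. For positivity, on the interior of each $\gamma_j$ the density is continuous and nonvanishing (as $Q$ only vanishes at $z_*,-\overline{z_*}$), so it has constant sign on each arc; the reflection symmetry $Q(-\bar z)=\overline{Q(z)}$ together with Theorem~\ref{thm: support zero distribution}(ii) forces the two signs to agree, and since the total mass $1$ is positive, both signs must be positive.

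For the spectral curve identity, I would set $h(z):=C^{\mu_*}(z)-Q^{1/2}(z)$, which is holomorphic on $\C\setminus\gamma$. By Plemelj's formula,
\[
C^{\mu_*}_+(z)-C^{\mu_*}_-(z) \;=\; 2\pi i\cdot\frac{1}{\pi i}Q_+^{1/2}(z) \;=\; 2Q_+^{1/2}(z) \;=\; Q_+^{1/2}(z)-Q_-^{1/2}(z),\qquad z\in\gamma,
\]
so the jumps cancel and $h$ extends analytically across $\gamma$. At each of the four branch points $\pm 1, z_*, -\overline{z_*}$, both $C^{\mu_*}$ and $Q^{1/2}$ have at worst an $O(|z-a|^{-1/2})$ behavior (from the simple poles of $Q$ at $\pm 1$ and the square-root vanishing of the density at the hard endpoints $z_*,-\overline{z_*}$), so each such singularity of $h$ is removable and $h$ is entire. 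The standard expansion $C^{\mu_*}(z)=-1/z+O(1/z^2)$ at infinity (which uses $\int d\mu_*=1$) subtracted from~\eqref{eq: branch square root} gives $h(z)=i\lambda/2+O(1/z^2)$ as $z\to\infty$, so Liouville forces $h\equiv i\lambda/2$. Rearranging yields $\xi_1(z)=Q^{1/2}(z)$ on $\C\setminus\gamma$, and squaring gives~\eqref{eq: spectral curve}.

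The main delicate step will be the positivity argument. The trajectory conditions only ensure that $d\mu_*$ is real, not positively oriented, and pinning down the sign forces one either to exploit the reflection symmetry of $Q$ together with the fact that the total mass equals $+1$ (as above), or to inspect the leading behavior of $Q_+^{1/2}$ directly against the chosen orientation of $\gamma$ near one of the soft endpoints $\pm 1$. Careful bookkeeping of the branch of $Q^{1/2}$ specified by~\eqref{eq: branch square root}, along with the $\pm$-side conventions on $\gamma$, will be essential throughout.
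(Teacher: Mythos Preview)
Your argument is correct and tracks the paper's proof closely for the measure part: the paper also uses the trajectory property to get reality, the nonvanishing of the density to get constant sign on each arc, the symmetry $Q^{1/2}(-\bar z)=-\overline{Q^{1/2}(z)}$ to match the two arcs, and a residue-at-infinity computation for the mass (there packaged as the half-mass identity $\int_{z_*}^1 Q_+^{1/2}\,ds=\pi i/2$, which together with symmetry gives $\mu_*(\gamma_1)=\mu_*(\gamma_2)=\tfrac12$ directly). The only genuine methodological difference is in the spectral-curve step: the paper writes $C^{\mu_*}(z)=\frac{1}{2\pi i}\oint_C \frac{Q^{1/2}(s)}{s-z}\,ds$ and deforms $C$ to infinity, picking up $\mathrm{Res}_{s=z}+\mathrm{Res}_{s=\infty}=Q^{1/2}(z)+i\lambda/2$ in one line, whereas you run a Liouville argument on $h=C^{\mu_*}-Q^{1/2}$. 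Both are standard; the paper's residue computation is shorter because it avoids the endpoint bookkeeping, while your Liouville route is more self-contained. (Minor terminology slip: in this setting $\pm 1$ are the hard edges and $z_*,-\overline{z_*}$ the soft edges, not the reverse---but this does not affect your argument.)
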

	
	With the measure $\mu_*$ in hand, we use it as an input for the asymptotic analysis of $p_n^\lambda$ for large $n$. This asymptotic analysis is based on the Riemann-Hilbert formulation for orthogonal polynomials which, combined with the Deift-Zhou nonlinear steepest descent method, provides not only the weak zero distribution for $p_n^{\lambda}$, but also strong asymptotic formulas for $p_n^\lambda$. 
	
	To discuss some of these asymptotic results, introduce the quantity
	\begin{equation}\label{def: constant kappa}
	\kappa=\kappa(\lambda):=-i\int_{-\overline z_*}^{z_*} Q^{1/2}(s)\,ds=\im\int_{-\overline z_*}^{z_*} Q^{1/2}(s)\,ds.
	\end{equation}
	where the path of integration is a straight line segment and the last identity follows from symmetry considerations. This real value $\kappa$ is a real-analytic function of $\lambda>\lambda_c$. As we will show in Section~\ref{sec: global parametrix}, for $n$ odd and a function $c=c(\lambda)$ which will be defined later with the help of \eqref{def: theta divisor}--\eqref{def: meromorphic differential n k}, the difference $2n\kappa(\lambda)-c(\lambda)$ takes values in $2\pi \Z$ only for a discrete set of values of $\lambda$  and $n$. With this in mind, for $\varepsilon>0$, we define the set $\Theta_\varepsilon^*$ to be the set of pairs $(n,\lambda)$ for which the quantity $2n\kappa(\lambda)-c(\lambda)$ is within a distance $\epsilon$ from $2\pi \Z$. More details are given in Section~\ref{section: analysis theta divisor}. Set
	\begin{equation}\label{eq: phi 4 eqn}
	\phi(z) = \int_{1}^{z} Q^{1/2}(s)\, ds + \frac{i\kappa}{2},\quad z\in \C\setminus (\gamma_1\cup\gamma_2).
	\end{equation}
	As we will verify later, the function $\phi$ is well defined modulo $\pi i$. Also, from the expansion \eqref{eq: branch square root} we see that
	\begin{equation}\label{eq: expansion phi}
	\phi(z) = - \frac{i\lambda z}{2} - \log z +\frac{i\kappa}{2} - l + \mathcal{O}\left(\frac{1}{z}\right), \qquad z \to \infty.
	\end{equation}
	for some complex constant $l$.
	
	\begin{theorem}\label{thm: strong asymptotics gamma}
		Fix $\varepsilon>0$ and suppose $\lambda>\lambda_c$. 
		For $n$ sufficiently large, and $(n,\lambda)\not\in \Theta^*_\varepsilon$ in case $n$ is odd, the kissing polynomial $p_n^\lambda$ in \eqref{eq: varying weight kissing polynomials} uniquely exists as a monic polynomial of degree exactly $n$, and the weak asymptotics of its zeros $z_1,\hdots,z_n$ is given by the weak limit
		\begin{equation}\label{eq: weak convergence zeros}
		\lim_{n\to \infty} \frac{1}{n}\sum_{k=1}^n \delta_{z_k}\stackrel{*}{=}\mu_*,
		\end{equation}
		where $\delta_z$ is the atomic measure with mass $1$ at $z$.
		
		Furthermore, as $n\to \infty$ the asymptotic formulas
		\begin{equation}\label{eq: asymptotics outside gamma}
		\begin{aligned}
		p^\lambda_{2n}(z) & =\Psi_{n,0}(z)e^{2n\left(\frac{i\kappa}{2}-\frac{i \lambda}{2}z-l-\phi(z)\right)}\left(1+\Boh(n^{-1})\right),\\
		p^\lambda_{2n+1}(z) & =e^{(2n+1)\left(\frac{i\kappa}{2}-\frac{i \lambda}{2}z-l-\phi(z)\right)}\left(\Psi_{n,1}(z)+\Boh(n^{-1})\right) 
		\end{aligned}
		\end{equation}
		hold true uniformly in compacts of $ \C\setminus (\gamma_1\cup \gamma_2)$ and $\C\setminus (\gamma_1\cup \widehat{ \gamma}\cup \gamma_2)$, respectively, where the functions $\Psi_{n,0}$ and $\Psi_{n,1}$ have the following properties.
		\begin{enumerate}[(i)]
			\item $\Psi_{n,0}$ is holomorphic on $\C\setminus (\gamma_1\cup\gamma_2)$, whereas $\Psi_{n,1}$ is holomorphic on $\C\setminus (\gamma_1\cup \widehat{ \gamma}\cup \gamma_2)$, where $\widehat \gamma$ is a contour connecting $-\overline z_*$ and $z_*$, and they remain bounded in compacts of their domains of definition as $n\to \infty$.
			\item $\Psi_{n,0}$ does not have zeros.
			\item The function $\Psi_{n,1}$ has a unique zero at a point $a_*=a_*(n,\lambda)$, which is simple and located on the imaginary axis.
		\end{enumerate}
	\end{theorem}
	
	We note that although $\Psi_{n,1}$ has a jump on a contour $\widehat \gamma$ which does not contain zeros of $p^{\lambda}_{2n+1}$, it actually turns out that $\Psi_{n,1}e^{-(2n+1)\phi}$ is analytic on $\C\setminus (\gamma_1\cup\gamma_2)$, so the leading term on the right-hand side of \eqref{eq: asymptotics outside gamma} is in fact analytic on $\C\setminus (\gamma_1\cup\gamma_2)$.
	
	The nature of the restriction on odd $n$ in Theorem~\ref{thm: strong asymptotics gamma} is due to the construction of the so-called global parametrix, whose existence can only be assured upon verifying the non-degeneracy conditions leading to the definition of $\Theta_\varepsilon^*$ above. Again, details are given in Section~\ref{sec: global parametrix}, and in particular Section~\ref{section: analysis theta divisor}. The functions $\Psi_{n,0}$ and $\Psi_{n,1}$ are specific entries in this global parametrix, and as such they are initially constructed with the help of meromorphic differentials on the Riemann surface associated to \eqref{eq: spectral curve}. For the benefit of the reader, in Section~\ref{sec: asymptotics} we translate this construction into complex-analytic terms, which leads to more explicit forms for $\Psi_{n,0}$ and $\Psi_{n,1}$.
	
	The appearance of a zero of $\Psi_{n,1}$ on the imaginary axis is natural in our situation, because for odd $n$ the polynomial $p_n^\lambda$ has, by symmetry, exactly one zero on the imaginary axis. When $\lambda<\lambda_c$, the support of the limiting zero distribution of $p_n^{\lambda}$ is connected and intersects the imaginary axis, so this zero on $i\R$ is always encoded in the limiting distribution. However, when $\lambda>\lambda_c$, the support of the limiting distribution $\mu_*$ no longer touches the imaginary axis. Therefore, the purely imaginary zero of $p_{2n+1}^\lambda$ remains an outlier, and can intuitively be thought of as a \say{spurious} zero in the language of rational approximation. 
	
	\begin{figure}[t]
		\centering
		\begin{subfigure}[b]{0.32\linewidth}
			\includegraphics[width=\linewidth]{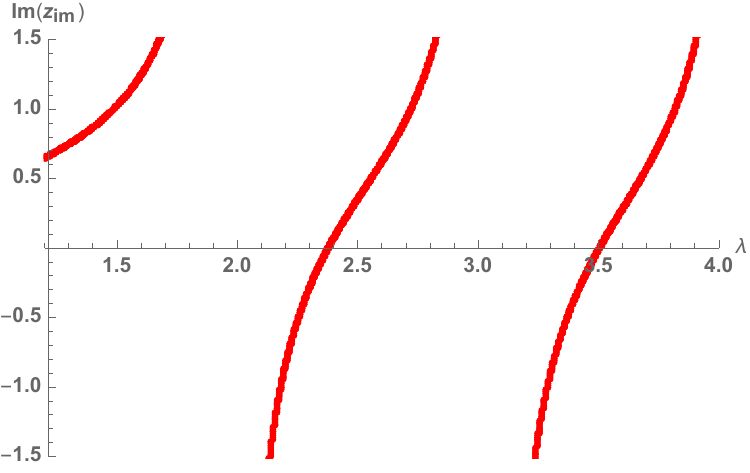}
			\caption*{$n= 1$}
			\label{fig: spur3}
		\end{subfigure}
		\begin{subfigure}[b]{0.32\linewidth}
			\includegraphics[width=\linewidth]{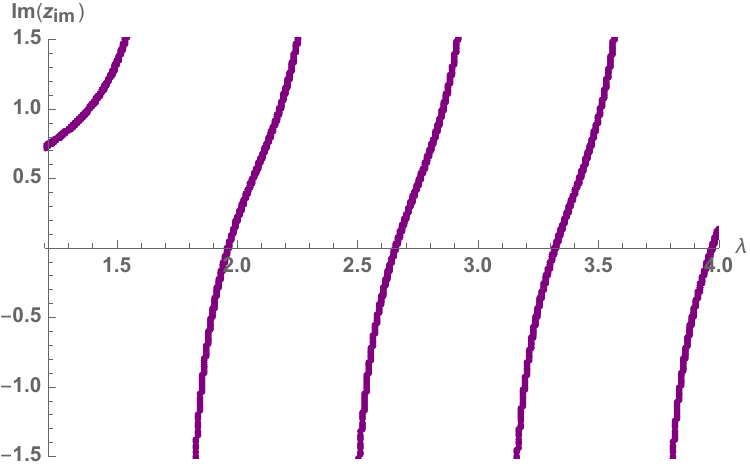}
			\caption*{$n = 2$}
			\label{fig: spur5}
		\end{subfigure}
		\begin{subfigure}[b]{0.32\linewidth}
			\includegraphics[width=\linewidth]{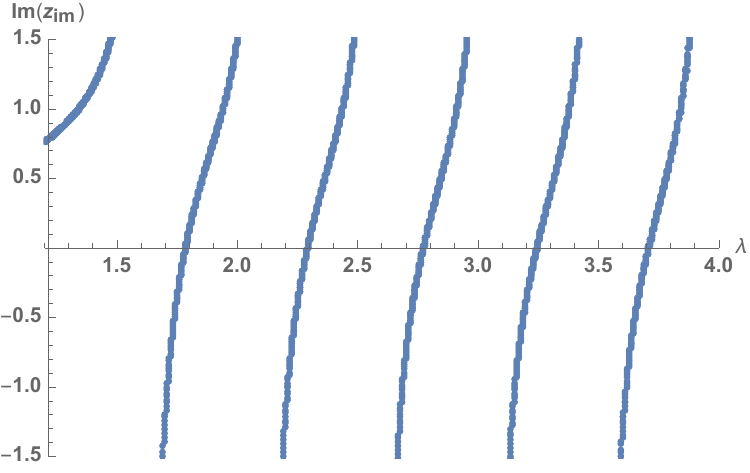}
			\caption*{$n = 3$}
			\label{fig: spur7}
		\end{subfigure}
		\caption{Plotting the imaginary part of the purely imaginary zero of $p_{2n+1}^\lambda$ as a function of $\lambda$, for $n=1,2,3$.}
		\label{fig: spur}
	\end{figure}
	
	As it was pointed out by Deaño, Huybrechs and Iserles \cite[Proposition~3]{deano2015kissing}, for any $n$ fixed, there is always a sequence $\omega=\omega_j\to \infty$ for which $p_{2n+1}^{\omega_j}$ never exists (as a polynomial of degree exactly $2n+1$). Having in mind the identification $\omega=n\lambda$, and leaving aside technicalities such as the uniformity of the error in \eqref{eq: asymptotics outside gamma} for large $\lambda$, it is therefore natural that $p_{2n+1}^\lambda$ need not exist for all values of $\lambda$. As such, the restriction on odd degrees in Theorem~\ref{thm: strong asymptotics gamma} was to be expected.

%--------------------------------------------------------------------------
% Overview/Acknowledgments
%--------------------------------------------------------------------------
	\subsection*{Overview of the Paper} The present work is structured as follows. 
	
	In Section \ref{sec: boutroux} we prove Theorem~\ref{thm: boutroux condition}. The proof is based on the symmetries of the rational function $Q$, as well as on some explicit conformal mapping calculations.
	
	Theorems~\ref{thm: support zero distribution} and \ref{thm: density zero distribution} are proven in Section~\ref{sec: trajectories}. These proofs are based on the theory of quadratic differentials, and are greatly inspired by ideas developed by Mart\'inez-Finkelshtein and the second-named author, together with collaborators \cite{martinez_finkelshtein_silva2016,martinez2016complexjacobi,martinez2014complexlaguerre,faouzi2016complexjacobi,bleher_silva}.
	
	The asymptotic analysis of $p_n^\lambda$ is performed via the Riemann-Hilbert formulation for the orthogonal polynomials combined with the Deift-Zhou nonlinear steepest descent method \cite{deift1999strong,deift1999uniform}. Despite the fact that we are dealing with complex orthogonality, much of the analysis follows in parallel with the case of real orthogonality \cite{kuijlaars2004riemann}. In Section \ref{sec: rhp}, we make a brief overview of this method, quickly describing the outcome of its first steps. Although this asymptotic analysis, at the algebraic level, does not differ much from previous analysis in the literature, it is worth mentioning that a crucial object in the first steps is the rational function $Q$ obtained through Theorem~\ref{thm: boutroux condition}, or equivalently, the corresponding measure $\mu_*$ in Theorem~\ref{thm: density zero distribution}.
	
 Different constructions of the so-called global parametrix are present in the literature, for instance in the weighted case on higher genera Riemann surfaces with real symmetry \cite{deift1999uniform}, in the unweighted case without symmetries \cite{aptekarevyattselev2015,yattselev2015} or yet in the logarithmically weighted case without symmetries as well \cite{yattselev2018}; all these works rely on constructions using theta functions. In our work, the main substantial difference in the asymptotic analysis appears in the construction of this parametrix. Due to the lack of real symmetry to explore, certain technical obstacles arise when trying to solve the associated model Riemann-Hilbert problem. This was already evidenced in the restriction on odd degrees in the statement of Theorem~\ref{thm: strong asymptotics gamma} and briefly discussed thereafter. At any rate, we follow ideas similar to the ones developed by Kuijlaars and Mo \cite{kuijlaars2011global}, but provide calculations in a more explicit manner so as to be able to construct this parametrix, rather than simply assure its existence. This construction is carried out in Section~\ref{sec: global parametrix}. The restriction $(n,\lambda)\not\in \Theta^*_\varepsilon$ in Theorem~\ref{thm: strong asymptotics gamma} also appears for the first time in this section, and is thoroughly discussed. 
	
	In Section \ref{sec: conclusion steepest descent} we briefly discuss the construction of local parametrix, which in our situation turns out to be constructed with the use of Bessel (for the endpoints $\pm 1$) and Airy (for the endpoints $z_*$ and $-\overline z_*$) functions. At the end of this section, we conclude the steepest descent analysis.
	
	Finally, in Section~\ref{sec: asymptotics} we wrap up the conclusions drawn from the steepest descent analysis, hence obtaining asymptotic formulas for $p_n^\lambda$, and in particular concluding the proof of the asymptotic results in Theorem~\ref{thm: strong asymptotics gamma}.
	
	\subsection*{Acknowledgments} 
	The authors thank Alfredo Deaño, Arieh Iserles and Andrei Martínez-Finkelshtein for very fruitful discussions. They are also grateful to Ahmad Barhoumi and Maxim Yattselev, who pointed out a mistake in a previous version of Theorem 2.4. This collaboration started when the authors attended the conference {\it Foundations of Computational Mathematics 2017} (FOCM 2017), and the authors are also grateful to the organizers of this conference for providing an excellent atmosphere for discussion. Most of this work was carried out while G.S. was a Postdoctoral Assistant Professor at the University of Michigan. He also acknowledges his current support by São Paulo Research Foundation under grant \#2019/16062-1.
	
%--------------------------------------------------------------------------
% Boutroux
%--------------------------------------------------------------------------
\section{The Boutroux Condition}\label{sec: boutroux}
	The Boutroux Condition, introduced in \cite{bertola2011boutroux,bertola2009commuting}, provides one approach to determining the asymptotics of orthogonal polynomials with respect to complex weights. 
	For a rational function $R$, the Boutroux condition asks that
	\begin{equation*}
	\oint_\alpha \xi \, ds \in i \mathbb{R},  
	\end{equation*}
	where $\alpha$ is any closed loop on the Riemann surface associated to the algebraic curve $\xi^2=R(z)$. 
	
	In the present setting, for any $x>0$, fix $\lambda>\lambda_c$ and consider
	\begin{equation}
		z_\lambda(x) = x+\frac{2i}{\lambda}, \qquad x>0,
	\end{equation} 
	and the associated rational function $Q=Q(\cdot;\lambda,x)$ as in \eqref{def: rational Q}. 
	Let $L$ be the union of the oriented line segments connecting $-1$ to $-\overline{z_\lambda(x)}$ and $z_\lambda(x)$ to $1$. In this section we use $L$ as a set of branch cuts for $Q^{1/2}$, so that we fix $Q^{1/2}$ to be the square root of $Q$ which is analytic on $\C\setminus L$ and that satisfies the asymptotics in \eqref{eq: branch square root}. Notice that the endpoints of $L$ (and, in loose terms, $L$ itself) vary continuously with the parameters $x\geq 0$ and $\lambda>0$.
	
	In very concrete terms, the Boutroux condition in our setting asks to find $x$ for which
	\begin{equation}\label{eq: periods}
	\re \int_{-1}^{-\overline{z_\lambda(x)}} Q_+^{1/2}(s)\, ds=\re  \int_{-\overline{z_\lambda(x)}}^{z_\lambda(x)} Q^{1/2}(s)\, ds =  \re\int_{z_\lambda(x)}^{1} Q_+^{1/2}(s)\, ds =0,
	\end{equation}
	where the integration takes place along straight line segments (that is, along subarcs of $L$), and we recall that the subscript $+$ denotes the limiting value of $Q^{1/2}\left(s\right)$ as we approach $s\in L$ from its left-hand side w.r.t. the orientation of $L$. We also point out to the reader that the last integral in \eqref{eq: periods} is exactly the same as \eqref{eq: boutroux condition}.
	
	\begin{prop}\label{prop: symmetry integrals}
		\begin{equation*}
		-\overline{\int_{z_\lambda(x)}^{1}Q_+^{1/2}(s)\; ds}=\int_{-1}^{-\overline z_\lambda(x)}Q_+^{1/2}(s)\; ds \quad \mbox{and} \quad \re \int_{-\overline{z_\lambda(x)}}^{z_\lambda(x)} Q_+^{1/2}(s)\, ds=0.
		\end{equation*}
	\end{prop}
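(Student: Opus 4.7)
Both identities rest on exploiting the reflection symmetry $\sigma\colon z\mapsto -\bar z$ across the imaginary axis, which preserves the configuration. The first step is to observe, directly from \eqref{def: rational Q} and the fact that $\lambda,x\in\R$, that
$$
Q(-\bar z)=\overline{Q(z)}, \qquad z\in\C.
$$
Next, the branch-cut set $L=[-1,-\overline{z_\lambda(x)}]\cup[z_\lambda(x),1]$ is manifestly $\sigma$-invariant: the line $\{\im z=2/\lambda\}$ is fixed pointwise by $\sigma$, and the two line segments are swapped. Hence the map $z\mapsto -\overline{Q^{1/2}(z)}$ is a well-defined analytic square root of $Q(-\bar z)$ on $\C\setminus L$. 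Comparing its asymptotic expansion at infinity with that of $Q^{1/2}(-\bar z)$ — both equal $-i\lambda/2+1/\bar z+\mathcal O(\bar z^{-2})$ by \eqref{eq: branch square root}, using that $\lambda$ is real — selects the sign and yields
$$
Q^{1/2}(-\bar z)=-\overline{Q^{1/2}(z)}, \qquad z\in\C\setminus L,
$$
together with its natural extension to the boundary values $Q^{1/2}_\pm$ on $L$.

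For the second identity, I would parametrize the middle segment by $s(t)=(2t-1)x+2i/\lambda$, $t\in[0,1]$, from $-\overline{z_\lambda(x)}$ to $z_\lambda(x)$. This lies entirely off $L$, so $Q^{1/2}$ is unambiguous along it. Two key features of this parametrization are that $s'(t)=2x$ is real and that $\sigma(s(t))=s(1-t)$. Writing $I:=\int_{-\overline{z_\lambda(x)}}^{z_\lambda(x)}Q^{1/2}(s)\,ds$, the substitution $t\mapsto 1-t$ combined with the identity displayed above gives
$$
I=\int_0^1 Q^{1/2}(s(t))\,2x\,dt=\int_0^1\!-\overline{Q^{1/2}(s(t))}\,2x\,dt=-\overline{I},
$$
so $I\in i\R$, which is the second claim.

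For the first identity, I would parametrize the arc from $z_\lambda(x)$ to $1$ by $s(t)=(1-t)z_\lambda(x)+t$ and the arc from $-1$ to $-\overline{z_\lambda(x)}$ by $w(t)=-(1-t)+t(-\overline{z_\lambda(x)})$, for $t\in[0,1]$. A direct check shows $-\overline{s(t)}=w(1-t)$ and $\overline{s'(t)}=w'(t)$, so $\sigma$ sends the first parametrization to a reverse parametrization of the second. Using the identity $Q^{1/2}(-\bar z)=-\overline{Q^{1/2}(z)}$ at the appropriate boundary values, together with the substitution $t\mapsto 1-t$ and the compensating sign from the reverse orientation, one obtains
$$
-\overline{\int_{z_\lambda(x)}^{1}Q^{1/2}_+(s)\,ds}=\int_{-1}^{-\overline{z_\lambda(x)}}Q^{1/2}_+(s)\,ds.
$$

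The main bookkeeping obstacle is to confirm that the $+$-side is matched correctly under the reflection. Since $\sigma$, viewed as a real map on $\C$, has Jacobian determinant $-1$ and so is orientation-reversing, it interchanges the left and right local half-planes along any arc; combined with the reversal of orientation on $L$ induced by $s\mapsto -\bar s$, the left boundary ($+$-side) of one arc corresponds to the left boundary ($+$-side) of the image arc, so that $Q^{1/2}_+$ really does map to $Q^{1/2}_+$ under the substitution. Once this sign check is verified, the identity follows from a one-line computation as above.
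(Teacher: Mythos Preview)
Your proof is correct and follows essentially the same route as the paper's: both rest on the reflection symmetry $Q^{1/2}(-\bar z)=-\overline{Q^{1/2}(z)}$ (equivalently, the paper's $Q^{1/2}(\bar z)=-\overline{Q^{1/2}(-z)}$), extended to the $+$-boundary values, and then a direct change of variables in the integrals; you simply supply the details that the paper delegates to its Appendix~A. One small slip to fix: the line $\{\im z=2/\lambda\}$ is \emph{not} fixed pointwise by $\sigma(z)=-\bar z$ (only the imaginary axis is), but this is irrelevant since all you need is that $L$ is $\sigma$-invariant as a set, which you correctly note via the swapping of the two segments.
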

	\begin{proof}
		Both identities follow from the symmetry $Q^{1/2}(\overline z)=-\overline {Q^{1/2}(-z)}$, which is valid for $z$ outside the contours of integration and extends to the cuts as $Q_+^{1/2}(\overline z)=-\overline {Q_+^{1/2}(-z)}$. See also Appendix~\ref{appendix: symmetries}.
	\end{proof}
	
	As a result of Proposition~\ref{prop: symmetry integrals}, to determine $x$ such that equations \eqref{eq: periods} are satisfied, all we have to do is to make sure that the last integral in \eqref{eq: periods} is purely imaginary.  To do this, we consider the function
	\begin{equation}\label{eq: psi lambda defn}
	\psi(x) =\re \int_{z_\lambda(x)}^1 Q_+^{1/2}(s)\, ds= \re \int_{x+\frac{2i}{\lambda}}^1 Q_+^{1/2}(s;\lambda, x)\, ds.
	\end{equation}
	We emphasize that for any fixed $x$, the contours of integration are still straight line segments. This assures that for fixed $\lambda$, the function $\psi$ is a continuous function of $x\in [0,1]$. Our next task is to show that $\psi$ changes sign on $[0,1]$. 
	\begin{prop}\label{prop: negative}
		For any $\lambda > \lambda_c$, we have that
		\begin{equation*}
		\psi(0) < 0.
		\end{equation*}
	\end{prop}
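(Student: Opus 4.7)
The plan is to evaluate $\psi(0)$ in closed form and to recognize the resulting expression as exactly half of the defining quantity of $\lambda_c$ from \eqref{eq: lambda crit defn}; monotonicity in $\lambda$ then gives strict negativity for $\lambda>\lambda_c$.

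First, I would exploit the fact that at $x=0$ the two candidate roots $z_\lambda(0)$ and $-\overline{z_\lambda(0)}$ coincide at $2i/\lambda$, so $Q(\cdot;\lambda,0)$ has a double zero there and $Q^{1/2}(\cdot;\lambda,0)$ is single-valued across $2i/\lambda$. Its only genuine branch points are $\pm 1$, so I can deform the cut $L$ away from the segment of integration and work with the branch of $\sqrt{z^2-1}$ analytic on $\C\setminus[-1,1]$ with $\sqrt{z^2-1}\sim z$ at infinity. The asymptotic normalization \eqref{eq: branch square root} then forces
$$
Q^{1/2}(z;\lambda,0) = -\frac{i\lambda}{2}\,\frac{z-2i/\lambda}{\sqrt{z^2-1}}.
$$

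Second, I would integrate explicitly via the elementary antiderivative
$$
\int \frac{s-2i/\lambda}{\sqrt{s^2-1}}\,ds \;=\; \sqrt{s^2-1} \;-\; \frac{2i}{\lambda}\log\!\left(s+\sqrt{s^2-1}\right) + C,
$$
evaluated between $s=2i/\lambda$ and $s=1$. At $s=1$ both terms vanish; at $s=2i/\lambda$ the selected branch gives $\sqrt{s^2-1} = i\sqrt{\lambda^2+4}/\lambda$, so $s+\sqrt{s^2-1} = i(2+\sqrt{\lambda^2+4})/\lambda$, whose logarithm splits into a real part $\log((2+\sqrt{\lambda^2+4})/\lambda)$ plus $i\pi/2$. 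Multiplying the resulting expression by the prefactor $-i\lambda/2$ and taking the real part yields, after routine simplification,
$$
2\psi(0) \;=\; 2\log\!\left(\frac{2+\sqrt{\lambda^2+4}}{\lambda}\right) - \sqrt{\lambda^2+4},
$$
which is precisely the left-hand side of \eqref{eq: lambda crit defn}.

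Third, viewing $f(\lambda):=2\psi(0)$ as a function on $(0,\infty)$, I would differentiate. Writing $\sigma=\sqrt{\lambda^2+4}$ and using $\sigma'=\lambda/\sigma$, a short calculation gives
$$
f'(\lambda) \;=\; -\frac{\lambda}{2+\sigma} - \frac{2}{\lambda},
$$
which is manifestly negative for all $\lambda>0$. Since $\lambda_c$ is by definition the (unique positive) zero of $f$, strict monotonicity yields $f(\lambda)<0$, hence $\psi(0)<0$, for every $\lambda>\lambda_c$.

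There is no substantive obstacle: the only point requiring care is the branch-bookkeeping at the kink $s=2i/\lambda$ of $L$, but because $Q^{1/2}$ is analytic across this kink at $x=0$ the branch choice reduces to a routine check, and the rest of the argument is elementary calculus.
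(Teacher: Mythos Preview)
Your proof is correct and follows essentially the same approach as the paper: both explicitly compute $Q^{1/2}(\,\cdot\,;\lambda,0)$ using the double zero at $2i/\lambda$, integrate via elementary antiderivatives to obtain $\psi(0)=\log\bigl((2+\sqrt{\lambda^2+4})/\lambda\bigr)-\tfrac{1}{2}\sqrt{\lambda^2+4}$, recognize this as (half of) the defining expression \eqref{eq: lambda crit defn} for $\lambda_c$, and conclude by showing the derivative in $\lambda$ is negative. Your derivative $f'(\lambda)=-\lambda/(2+\sigma)-2/\lambda$ simplifies (using $\sigma^2=\lambda^2+4$) to the paper's $-\sigma/\lambda$, so even this step coincides.
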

	\begin{proof}
		We have that 
		\begin{equation*}
		\psi\left(0\right) = \text{Re }\int_{\frac{2i}{\lambda}}^{1} Q_+^{1/2}\left(s;\lambda, 0\right)\,ds.
		\end{equation*}
		In this situation, we can write $Q^{1/2}$ explicitly as
		\begin{equation*}
		Q^{1/2}(s;\lambda,0) = \frac{-i\lambda}{2}\frac{(s-\frac{2i}{\lambda})}{\sqrt{s^2-1}},
		\end{equation*}
		where the branch of $\sqrt{s^2-1}$ is the principal branch, so that $\sqrt{s^2-1} \sim s$, as $s\to \infty$. Using that
		$$
		\frac{d}{ds}\left(\sqrt{s^2-1} \right)=\frac{s}{\sqrt{s^2-1}}\quad \mbox{and} \quad \frac{d}{ds}\left(\frac{1}{2}\log\left(\frac{\sqrt{s^2-1}+s}{\sqrt{s^2-1}-s}\right)\right)=\frac{1}{\sqrt{s^2-1}},
		$$
		a cumbersome, but straightforward calculation gives us that
		\begin{equation*}
		-\frac{i\lambda}{2} \int_{\frac{2i}{\lambda}}^1 \frac{(s-\frac{2i}{\lambda})}{\sqrt{s^2-1}}\, ds = 	-\frac{\sqrt{4+\lambda^2}}{2}+\log\left(\frac{i\left(2+\sqrt{4+\lambda^2}\right)}{\lambda}\right),
		\end{equation*}
		so that by taking real parts, we have that
		\begin{equation*}
		\psi(0) = \log\left(\frac{2+\sqrt{4+\lambda^2}}{\lambda}\right)-\frac{\sqrt{4+\lambda^2}}{2}.
		\end{equation*}
		Note that $\psi(0)=0$ when $\lambda=\lambda_c$, which follows from the definition of $\lambda_c$ as the only positive solution to \eqref{eq: lambda crit defn}. Furthermore,
		\begin{equation*}
		\frac{d}{d\lambda} \psi(0) = -\frac{\sqrt{4+\lambda^2}}{2\lambda} <0,
		\end{equation*}
		so $\psi(0) < 0$ for all $\lambda > \lambda_c$, as desired. 
	\end{proof}
	
	%-------------------------------------------------------------
	% Prop 4	
	%-------------------------------------------------------------
	
	\begin{prop}\label{prop: psi lambda 1 > 0}
		For all $\lambda >  0$, we have that
		\begin{equation*}
		\psi(1) > 0.
		\end{equation*}
	\end{prop}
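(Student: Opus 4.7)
The plan is to compute $\psi(1)$ explicitly, in the spirit of Proposition~\ref{prop: negative}, taking advantage of the fact that at $x=1$ the line segment from $z_\lambda(1)=1+2i/\lambda$ to $1$ is vertical and the rational function $Q(\cdot;\lambda,1)$ factors very cleanly. I would introduce the change of variable $s=1+\tfrac{2i}{\lambda}(1-u)$, $u\in[0,1]$, which sends $u=0$ to $s=z_\lambda(1)$ and $u=1$ to $s=1$, and note that $ds=-(2i/\lambda)\,du$. A direct algebraic computation gives
$$Q(s(u);\lambda,1)=\frac{\lambda^2}{4}\cdot\frac{u}{1-u}\cdot\frac{1-iu/\lambda}{1+i(1-u)/\lambda},$$
and taking real parts of the resulting integral yields
$$\psi(1)=\varepsilon\int_0^1 \sqrt{\frac{u}{1-u}}\,\im\sqrt{W(u)}\,du,\qquad W(u):=\frac{1-iu/\lambda}{1+i(1-u)/\lambda},$$
for a sign $\varepsilon\in\{\pm1\}$ determined by the branch of $Q_+^{1/2}$ fixed by the convention \eqref{eq: branch square root}.

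The next step is to analyze the sign of the integrand. For every $u\in(0,1)$ and every $\lambda>0$,
$$\arg W(u)=-\arctan(u/\lambda)-\arctan((1-u)/\lambda)\in(-\pi,0),$$
so $W(u)$ always lies in the open lower half plane. Consequently one of the two branches of $\sqrt{W(u)}$ has strictly positive imaginary part throughout $(0,1)$ and the other has strictly negative imaginary part. Combined with the positive factor $\sqrt{u/(1-u)}$, this alone guarantees $\psi(1)\neq0$ for every $\lambda>0$, with its sign depending only on the combination of $\varepsilon$ and the chosen branch of $\sqrt{W}$.

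The one genuinely subtle step, and the main obstacle of the proof, is to fix these signs consistently with the global convention. I would track the argument of $Q^{1/2}$ starting from a large real point $s=R$, where by \eqref{eq: branch square root} we have $Q^{1/2}(R)\sim-i\lambda/2$, along a path in the right half plane that does not cross the cut $L$ and terminates at a reference point on the $+$ side of the segment from $z_\lambda(1)$ to $1$; since $Q$ is non-vanishing off the cut, this analytic continuation is unambiguous and can be done explicitly. An independent cross-check is provided by the symmetry $Q_+^{1/2}(\overline z)=-\overline{Q_+^{1/2}(-z)}$ already used in Proposition~\ref{prop: symmetry integrals}, applied at conveniently chosen base points relating the two cuts. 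This bookkeeping singles out the sign combination making the integrand above strictly positive, and hence $\psi(1)>0$. As a sanity check, in the regime $\lambda\to\infty$ the computation reduces to the positive leading asymptotic $\psi(1)=\pi/(4\lambda)+\mathcal{O}(\lambda^{-2})$, consistent with the strict positivity claimed for all $\lambda>0$.
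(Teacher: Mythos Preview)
Your approach is essentially the same as the paper's: parametrize the segment, show the integrand has a definite sign up to a branch ambiguity, then resolve that ambiguity by analytic continuation from the normalization at infinity. The only difference is cosmetic: the paper first applies the affine map $s\mapsto i(s-1)$, which rotates the vertical segment onto $(-2/\lambda,0)\subset\R$, and then tracks $\arg R^{1/2}(s)$ along the real axis starting from $s=-\infty$ (where $R^{1/2}\to 1$), past the branch point $-2/\lambda$, showing $\re R_+^{1/2}<0$ on $(-2/\lambda,0)$. Your parametrization $s=1+\tfrac{2i}{\lambda}(1-u)$ leads to the same computation in different coordinates.

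The one place your write-up falls short of a proof is precisely the step you flag as ``genuinely subtle'': you assert that tracking $\arg Q^{1/2}$ from a large real point to the $+$-side of the cut ``can be done explicitly'' and ``singles out'' the correct sign, but you do not do it. This is exactly where the paper invests its effort, and it is not entirely mechanical --- one needs to check that the real and imaginary parts of the relevant function do not vanish along the continuation path, and to handle the $\pi/2$ jump in argument at the square-root branch point. Your $\lambda\to\infty$ sanity check is helpful but does not replace this; as written, your argument establishes $\psi(1)\neq 0$ for all $\lambda>0$ but leaves the sign undetermined. To complete the proof along your lines, you would need to carry out that continuation explicitly for at least one value of $\lambda$ (or in the limit), and then argue by continuity in $\lambda$ together with $\psi(1)\neq 0$.
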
	
	\begin{proof}
		Through the linear change of variables
		$$
		s\mapsto i(s-1)
		$$
		we see that
		\begin{equation}\label{eq: translated rotated psi 1}
		\psi\left(1\right) = -\frac{\lambda}{2}\int_{-\frac{2}{\lambda}}^0 \text{Re }R^{1/2}_+(s)\, ds,
		\end{equation}
		where
		\begin{equation*}
		R(s) = \frac{\left(2+s\lambda\right)\left(2+\lambda(2i+s)\right)}{\lambda^2 s \left(2i+s\right)}.
		\end{equation*}
		As we have fixed the branch of $Q^{1/2}$, the branch of $R^{1/2}$ in \eqref{eq: translated rotated psi 1} is the one that behaves like
		\begin{equation*}
		R^{1/2}(s) \to 1, \qquad s \to \infty.
		\end{equation*}
		Here, $R^{1/2}$ has branch cuts on the horizontal segments $\left(-\frac{2}{\lambda},0\right)$ and $\left(-\frac{2}{\lambda}-2i,-2i\right)$ and the integral \eqref{eq: translated rotated psi 1} is computed along the first of these branch cuts. The goal now is to show that
		\begin{equation*}
		\text{Re } R_+^{1/2}(s) < 0, \qquad s\in \left(-\frac{2}{\lambda},0\right),
		\end{equation*}
		which will immediately imply that $\psi(1)>0$ for $\lambda > 0$. To do this, first note that for $s\in \R$ we can split $R(s)$ into real and imaginary parts as 
		\begin{equation*}
		R(s) = U(s) + i V(s), \quad	U(s) = \frac{\left(2+s\lambda\right)\left(2s+\lambda\left(4+s^2\right)\right)}{\lambda^2 s\left(4+s^2\right)},
		\quad
		V(s) = -\frac{4\left(2+s\lambda\right)}{\lambda^2\left(s^3+4s\right)}.
		\end{equation*}
		As $s \to -\infty$, 
		\begin{equation*}
		R(s) = 1 + \frac{4}{\lambda s} + \frac{4-4i\lambda}{\lambda^2 s^2} + \mathcal{O}\left(\frac{1}{s^3}\right),
		\end{equation*}
		so that as $s$ moves from $-\infty$ towards $-2/\lambda$ on the negative real axis, the image $R(s)$ traces out a curve in the plane, starting at $z=1$ and initially dropping into the lower-right hand quadrant. As neither the real nor imaginary parts of $R(s)$ have real zeros or poles in the interval $\left(-\infty, -\frac{2}{\lambda}\right)$, we can conclude that $R(s)$ remains in this quadrant for these values of $s$, and consequently
		\begin{equation*}
		\arg R^{1/2}(s) \in \left(-\frac{\pi}{4},0\right), \qquad s\in \left(-\infty,-\frac{2}{\lambda}\right).
		\end{equation*}
		In particular, from the expansion
		\begin{equation*}
		R(s) =-\frac{\lambda^2}{1+\lambda^2}(\lambda-i)\left(s+\frac{2}{\lambda}\right) + \mathcal{O}\left(\left(s+\frac{2}{\lambda}\right)^2\right), \qquad s \to -\frac{2}{\lambda},
		\end{equation*}
		we obtain
		\begin{equation*}
		\lim\limits_{\substack{s\to -\frac{2}{\lambda}\\s<-\frac{2}{\lambda}}} \arg R^{1/2}(s)=\lim\limits_{\substack{s\to -\frac{2}{\lambda}\\s<-\frac{2}{\lambda}}} \frac{1}{2}\arctan\frac{V(s)}{U(s)} = - \frac{\arctan \left(\frac{1}{\lambda}\right)}{2}\in \left[-\frac{\pi}{4},0\right].
		\end{equation*}
		Because $R^{1/2}$ vanishes as a square root at $s=-2/\lambda$, a conformal mapping analysis then implies that
		\begin{equation*}
		\lim\limits_{\substack{s\to -\frac{2}{\lambda}\\s>-\frac{2}{\lambda}}} \arg R_+^{1/2}(s) =- \frac{\arctan 	\left(\frac{1}{\lambda}\right)}{2}-\frac{\pi}{2} \in \left(-\frac{3\pi}{4},-\frac{\pi}{2}\right),
		\end{equation*}
		so that as $s$ moves from $s=-\frac{2}{\lambda}$ towards $0$ on the negative real axis, the image of $R^{1/2}_+(s)$ traces out a curve that starts at $0$ and ventures into the lower left hand quadrant of the plane. Just as before, neither the real nor imaginary parts of $R(s)$ have zeros or poles in the interval $\left(-\frac{2}{\lambda},0\right)$, and therefore we obtain
		\begin{equation*}
		\re R^{1/2}_+(s) < 0, \qquad s\in \left(-\frac{2}{\lambda},0\right),
		\end{equation*}
		as desired. 
	\end{proof}
	
	We are ready to prove the main result of this section.
	
	\begin{proof}[Proof of Theorem~\ref{thm: boutroux condition}]
		The existence of $x_*\in (0,1)$ follows from Proposition~\ref{prop: negative} and \ref{prop: psi lambda 1 > 0} and the continuity of $\psi(x)$. With existence covered, we define $z_* := z_\lambda(x_*)$.
		
		The uniqueness of such $x_*$ will follow later, in a more indirect manner. As this is not an essential part in any of the coming, we only outline the proof of this statement in the following steps:
		\begin{enumerate}[$\bullet$]
			\item The whole asymptotic analysis to be done later relies only on the existence of $x_*$ as in Theorem~\ref{thm: boutroux condition}, not on its uniqueness.
			\item In particular, for any other $x_*$ as in Theorem~\ref{thm: boutroux condition}, say $\widehat x_*$, we will be able to construct an associated measure $\widehat \mu_*$ as in Theorem~\ref{thm: density zero distribution} and verify the convergence \eqref{eq: weak convergence zeros} with $\widehat \mu_*$ instead of $\mu_*$
			\item By uniqueness of the limiting zero distribution, we would have $\widehat \mu_*=\mu_*$, and as such their supports would have to agree. Hence, the endpoints of the supports agree as well, thus $x_*=\widehat x_*$.
		\end{enumerate}
		
		Finally, to show that $x_*\to 1$ as $\lambda \to \infty$, we start with a change of variables in \eqref{eq: psi lambda defn} to arrive at
		\begin{equation*}
		\psi\left( x\right) = \re \int_{-1}^0 \left(1-x-\frac{2i}{\lambda}\right) Q^{1/2}_+\left( \left(1-x-\frac{2i}{\lambda}\right)s+1\right)\,ds,
		\end{equation*}
		where we are integrating over the branch cut oriented on the real axis from $-1$ to $0$. Another cumbersome calculation shows that
		\begin{equation*}
		\left(1-x-\frac{2i}{\lambda}\right) Q^{1/2}_+\left( \left(1-x-\frac{2i}{\lambda}\right)s+1\right) = \lambda c_1 + c_0 + \mathcal{O}\left(\frac{1}{\lambda}\right), \qquad \lambda \to \infty, 
		\end{equation*}
		where
		\begin{equation*}
		c_1 = \frac{i\left(x-1\right)}{2}\left(\frac{\left(s+1\right)\left(-1-s-x+sx\right)}{s\left(-2-s+sx\right)}\right)^{1/2}_+,
		\end{equation*}
		and 
		\begin{equation*}
		c_0=\frac{3+4s+s^2+x-4sx-2s^2x+s^2x^2}{\left(-2-s+sx\right)\left(-1-s-x+sx\right)} \left(\frac{\left(s+1\right)\left(-1-s-x+sx\right)}{s\left(-2-s+sx\right)}\right)^{1/2}_+,
		\end{equation*}
		with a uniform error term for $x$ in compact subsets of $\R$.
		
		If now, in particular, we restrict to $x\in \left(0,1\right)$, we see that
		\begin{equation*}
		\left(\frac{\left(s+1\right)\left(-1-s-x+sx\right)}{s\left(-2-s+sx\right)}\right)^{1/2}_+ \in i \mathbb{R}
		\end{equation*}
		for $s \in \left(-1,0\right)$, so in this case
		\begin{equation*}
		\psi\left(x\right) = \frac{i\lambda\left(x-1\right)}{2} \int_{-1}^0 \left(\frac{\left(s+1\right)\left(-1-s-x+sx\right)}{s\left(-2-s+sx\right)}\right)^{1/2}_+\,ds + \mathcal{O}\left(\frac{1}{\lambda}\right), \qquad \lambda \to \infty,
		\end{equation*}
		with uniform error for $x\in (0,1)$.
		This means that the coefficient with $\lambda$ in the right-hand side above is nonzero if and only if $x\neq 1$. However, we must have $x_*\in (0,1)$ and $\psi(x_*)=0$ for every $\lambda>\lambda_c$, which, by virtue of the expansion above, can only happen if $x_*\to 1$ as $\lambda\to \infty$, as desired.
		%		\begin{equation*}
		%			x = 1 + \mathcal{O}\left(\frac{1}{\lambda^{1+\epsilon}}\right), \qquad \lambda \to \infty,
		%		\end{equation*}
		%		as desired.  
	\end{proof}
	
	Actually, we showed that
	\begin{equation}\label{eq: order xstar}
	x_* = 1 + o\left(\frac{1}{\lambda}\right), \qquad \lambda \to \infty,
	\end{equation}
	because the right-hand side of the last identity above, when evaluated at $x_*$, must be $0$ also in the limit $\lambda\to \infty$.
	
\begin{remark}	\label{rmk:uniqueness_x}
Along the same lines as we indicated on how to proceed for the proof of uniqueness above, we could in fact verify that $x_*\to 0$ as $\lambda\to \lambda_c^+$ as follows.

 First, one should observe that for $\lambda=\lambda_c$ the equation \eqref{eq: boutroux condition} is solved by $x_*=0$; this is a consequence of the genus zero study by Deaño \cite{deano2014large}. Following very closely the asymptotic analysis presented therein (with the only difference that a local parametrix at $z_*\in i\R$ would have to be considered as well), it does follow that an asymptotic formula as \eqref{eq: asymptotics outside gamma} takes place, say along the whole subsequence of polynomials $p_{2n}^{\lambda}$ of even degree. Consequently, this would imply the zero convergence of the whole sequence of counting measures for $p_{2n}^{\lambda}$ towards the measure determined by the conditions \eqref{eq: boutroux condition} and \eqref{eq: pretrajectory1}--\eqref{eq: pretrajectory2} with $x_*=0$, so in particular in this case the limiting support of zeros $\gamma_1\cup\gamma_2$ consists of one single arc, say $\gamma(x_*=0)$, that intersects the imaginary axis at $z_*$.

Back to the limit of $x_*(\lambda)$ as $\lambda \searrow \lambda_c$, observe that because $0\leq x_*\leq 1$ for any $\lambda>\lambda_c$ we can talk about accumulation points of $x_*=x_*(\lambda)$ as $\lambda\to\lambda_c$. Any such accumulation point, say $\widehat x_*$, would necessarily solve \eqref{eq: boutroux condition} with $\lambda=\lambda_c$, as this is a continuous condition on $x_*$. As such, for any such accumulation point we could perform the whole asymptotic analysis carried out in this paper, concluding at the end of the day the asymptotic formula \eqref{eq: asymptotics outside gamma} for the whole sequence $p_{2n}^\lambda$ with $\lambda=\lambda_c$ and this value $\widehat x_*$, obtaining also that the limiting support of zeros is $\gamma_1\cup\gamma_2=\gamma(\widehat x_*)$ obtained from $\widehat x_*$. 

Comparing the analysis just explained, we obtain that $\gamma(\widehat x_*)=\gamma(x_*=0)$, in particular $\gamma(\widehat x_*)$ is a single analytic arc, and then $\widehat x_*=0$ as wanted.

Despite the natural details needed to turn the sketch above into a rigorous statement, we should stress that the asymptotic analysis for $\lambda=\lambda_c$ is not yet rigorously completed in the literature. Such a proof would require, as mentioned above, an appropriate analysis of a local parametrix near the intersection point $z_*$ of $\gamma_1\cup\gamma_2$ that would occur in that case.  
\end{remark}	
	
	 To conclude this section, it is now useful to compute the first integral in \eqref{eq: periods}.
	
	\begin{prop} For $x_*$ given by Theorem~\ref{thm: boutroux condition}, we have that
		\begin{equation}\label{eq: half mass}
		\int_{z_*}^{1}Q^{1/2}_+(s)\; ds = \frac{\pi i}{2}.
		\end{equation}
	\end{prop}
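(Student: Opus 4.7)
The plan is to evaluate a contour integral of $Q^{1/2}$ around a large circle in two different ways, and then split the resulting identity using Proposition~\ref{prop: symmetry integrals} together with the Boutroux condition, which is already in hand since $x=x_*$ has just been produced in Theorem~\ref{thm: boutroux condition}.

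Concretely, I would begin by computing $\oint_{|z|=R} Q^{1/2}(z)\,dz$ in the limit $R\to\infty$. Using the expansion \eqref{eq: branch square root}, the constant term $-i\lambda/2$ contributes nothing to a closed contour integral, the $-1/z$ coefficient contributes $-2\pi i$, and the $\mathcal{O}(z^{-2})$ tail vanishes, so the integral tends to $-2\pi i$. On the other hand, since $Q^{1/2}$ is analytic on $\C\setminus L$, the same circle can be collapsed onto a contour hugging both components of $L$. The single-valuedness of $Q$ forces $Q^{1/2}_+ + Q^{1/2}_- = 0$ on $L$, so once the orientation is sorted out, this deformation produces $-2\int_L Q^{1/2}_+(s)\,ds$. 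Equating the two evaluations yields
$$\int_{-1}^{-\overline{z_*}} Q^{1/2}_+(s)\,ds + \int_{z_*}^{1} Q^{1/2}_+(s)\,ds = \pi i.$$

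To finish, I would apply Proposition~\ref{prop: symmetry integrals} to rewrite the first summand as $-\overline{\int_{z_*}^{1} Q^{1/2}_+(s)\,ds}$. By the Boutroux condition at $x=x_*$ (Theorem~\ref{thm: boutroux condition}), this last integral is purely imaginary, say equal to $iy$ with $y\in\R$; hence the previous display collapses to $2iy=\pi i$, giving $y=\pi/2$ and therefore \eqref{eq: half mass}.

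The only nontrivial bookkeeping will be making the orientation conventions consistent: one needs to confirm that the $+$-side of $L$ used throughout Section~\ref{sec: boutroux} is exactly the side inherited from the counterclockwise collapse of $|z|=R$, so that one indeed obtains the factor $-2$ rather than $+2$ in front of the boundary integral. Once that is fixed, no further difficulties arise; the result can be viewed, a posteriori, as saying that each of $\gamma_1$ and $\gamma_2$ carries exactly half of the total $\mu_*$-mass that will appear in Theorem~\ref{thm: density zero distribution}, which is precisely why the paper labels the identity \say{half mass}.
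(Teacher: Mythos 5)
Your proof is correct and follows essentially the same route as the paper: both rely on the expansion \eqref{eq: branch square root} to compute a residue at infinity, on the doubling identity $Q^{1/2}_+=-Q^{1/2}_-$ to collapse a large circle onto $L$, and on Proposition~\ref{prop: symmetry integrals} together with the Boutroux condition \eqref{eq: boutroux condition} to split the total into two equal purely imaginary halves. The only difference is the order of operations — you perform the contour deformation first and then invoke symmetry and Boutroux, while the paper does the reverse — which is a cosmetic reshuffling of the same argument.
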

	\begin{proof}
		Using \eqref{eq: boutroux condition} and Proposition~\ref{prop: symmetry integrals},
		$$
		2\int_{z_*}^1Q^{1/2}_+(s)\; ds=2i\im\int_{z_*}^1Q^{1/2}_+(s)\; ds=\int_{z_*}^1Q^{1/2}_+(s)\; ds+\int_{-1}^{-\overline z_*}Q^{1/2}_+(s)\; ds.
		$$
		Using deformation of contours on the right-hand side above, we get that
		$$
		2\int_{z_*}^1Q^{1/2}_+(s)\; ds=\frac{1}{2}\int_{\alpha}Q^{1/2}(s)\; ds,
		$$
		where $\alpha$ is a closed contour that encircles the whole branch cut $L$ in the clockwise direction. To compute the latter integral, we deform $\alpha$ to $\infty$ and use the expansion \eqref{eq: branch square root} to get
		$$
		\int_{z_*}^1Q^{1/2}_+(s)\; ds=\frac{1}{4}\;  2\pi i\res(Q^{1/2}(s),s=\infty)=\frac{\pi i}{2},
		$$
		as wanted.
	\end{proof}

%--------------------------------------------------------------------------
% Trajectories
%--------------------------------------------------------------------------
\section{The Associated Quadratic Differential and its Trajectories}\label{sec: trajectories}

	The final goal of this section is to prove Theorem~\ref{thm: support zero distribution}. To construct the arcs $\gamma_1$ and $\gamma_2$, our main tool will be the theory of trajectories of quadratic differentials. We will keep the discussion of the basic theory here to a minimum, and refer to \cite[Appendix~B]{martinez_finkelshtein_silva2016} for details. The general theory can be found in the books by Strebel \cite{strebel_book} or Pommerenke \cite[Chapter~8]{pommerenke_book}.
	
	We call an arc $\tau\subset \C$ an {\it arc of trajectory} of the quadratic differential $-Qdz^2$ if for a fixed point $p\in \tau$,
	$$
	\int_p^z Q^{1/2}(s)ds\in i\mathbb R,\quad z\in \tau.
	$$
	Note that although the integral above depends on the starting point $p$ and on the branch of the square root taken, the condition that it is purely imaginary is independent of these. Also, we remark that this condition matches with \eqref{eq: pretrajectory1}--\eqref{eq: pretrajectory2}.
	
	Similarly, an arc $\tau$ is called an {\it arc of orthogonal trajectory} if for a base point $p\in \tau$,
	$$
	\int_p^z Q^{1/2}(s)ds\in \mathbb R,\quad z\in \tau.
	$$
	Maximal arcs of (orthogonal) trajectories are simply called {\it (orthogonal) trajectories}. 
	
	If $Q(p)\neq 0,\infty$, then there exists exactly one arc of trajectory and one arc of orthogonal trajectory passing through $p$, and these arcs are orthogonal to each other at $p$. In particular, different critical trajectories can only intersect at critical points. 
	
	Trajectories emanating from zeros or simple poles of $-Qdz^2$ are called critical. From each simple zero $p=z_*,-\overline z_*$ of $Q$ emanates exactly three critical trajectories, and the angle between consecutive ones is $2\pi/3$. From each simple pole $p=\pm 1$ emanates one critical trajectory.
	
	Finally, from the expansion 
	$$
	Q(z)=-\frac{\lambda^2}{4}+\Boh(z^{-1}),\quad z\to \infty,
	$$
	we see that the point $z=\infty$ is a pole of order $4$ of $-Qdz^2$, and also that any trajectory extending to $\infty$ has to do so along angles $0$ or $\pi$, that is, horizontally. Furthermore, by the general theory there has to be at least two critical trajectories ending at $\infty$, one along each allowable direction.
	
	We will need two basic principles that follow from the general theory of trajectories of quadratic differentials. These principles are thoroughly discussed in \cite[Section~4.5.1]{martinez_finkelshtein_silva2016}.
	
	\begin{itemize}
		\item[\bf P1.] If a critical trajectory $\tau$ emerges from a zero contained in a simply connected domain $D\subset \C$ that does not contain poles of $Qdz^2$, then either $\tau$ connects to another zero inside $D$, or $\tau$ intersects $\partial D$. In a similar spirit, if $D\subset \C$ is simply connected and contains exactly one pole, then the trajectory emanating from this pole has to either end at a zero inside $D$ or hit the boundary of $D$.
		\item[\bf P2.] If $D\subset \C$ is a simply connected domain whose boundary is a union of critical trajectories and it does not contain poles in its interior, then it has to contain at least one pole on its boundary.
	\end{itemize}
	
	The next principles are specific to our particular situation, and follow immediately from the explicit form of $Q$ in \eqref{def: rational Q}.
	
	\begin{itemize}
		\item[\bf P3.] Because $Qdz^2$ has three distinct poles, any critical trajectory has to connect two critical points (possibly the same).
		\item[\bf P4.] If $\tau$ is an arc of trajectory, then its reflection $-\tau^*$ onto the imaginary axis is also an arc of trajectory.
	\end{itemize}
	
	For a zero $p$ denote its order by $\eta(p)$, and for a pole $p$ of order $m>0$ set $\eta(p)=-m$. Fix a simply connected domain $D\subset \overline \C$ whose boundary is a finite union of critical trajectories. Given a critical point $p\in \partial D$, we set
	$$
	\beta(p)=1-\theta(p)\frac{\eta(p)+2}{2\pi},
	$$
	where $\theta(p)\in [0,2\pi]$ is the inner angle of $\partial D$ at $p$. For instance, if $p$ is a zero on $\partial D$, the value $\beta(p)$ gives the number of trajectories emanating from $p$ along the interior of $D$. 
	
	The following formula, valid for any simply connected domain $D$ as above,  is known as the Teichm\"uller Lemma \cite[Theorem~14.1]{strebel_book},
	\begin{equation}\label{teichmuller_formula}
	\sum_{p\in \partial D} \beta(p) = 2+\sum_{p\in D} \eta(p).
	\end{equation}
	
	The global behavior of all critical trajectories of $Qdz^2$ is described by the following Theorem.
	
	\begin{theorem}\label{theorem: critical graph}
		The trajectories of $-Qdz^2$ are symmetric with respect to reflection over the imaginary axis. Furthermore, the critical trajectories in the right half plane are as follows.
		\begin{enumerate}[(i)]
			\item There is a critical trajectory connecting $1$ and $z_*$.
			\item There is a critical trajectory connecting $z_*$ and $-\overline z_*$. This is the only trajectory from $z_*$ that moves to the left-half plane.
			\item The remaining trajectory that emanates from $z_*$ ends at $\infty$. Furthermore, $\arg z\to 0$ as $z\to \infty$ along this trajectory. 
		\end{enumerate}
	\end{theorem}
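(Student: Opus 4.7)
The plan combines the reflection symmetry of $-Q\,dz^2$, the local trajectory data at each critical point, the Boutroux conditions of Section~\ref{sec: boutroux}, and the Teichm\"uller formula \eqref{teichmuller_formula} via principles \textbf{P1}--\textbf{P4}. The symmetry assertion is immediate from \textbf{P4}: since $\lambda$ and $x_*$ are real, one has $Q(-\bar z)=\overline{Q(z)}$, so the critical set $\{\pm 1,z_*,-\overline{z_*},\infty\}$ is invariant under $z\mapsto -\bar z$ and trajectories map to trajectories. It therefore suffices to determine the critical graph in the closed right half-plane.

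First I would catalogue the local data. From each simple zero $z_*,-\overline{z_*}$ emerge three critical trajectories at equal angles $2\pi/3$; from each simple pole $\pm 1$ exactly one; and since $Q(z)=-\lambda^2/4+\Boh(z^{-1})$, any trajectory extending to $\infty$ must do so horizontally, with $\arg z\to 0$ or $\arg z\to \pi$. By \textbf{P3} each of the eight ``ends'' at the finite critical points terminates either at another finite critical point or at $\infty$.

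To identify the connections of items (i)--(iii), I would follow the unique trajectory $\tau$ emanating from the simple pole $1$. A direct computation using \eqref{def: rational Q} shows $Q(iy)\in(-\infty,0)$ for all $y\in\R$, so at any imaginary-axis point the local trajectory direction is horizontal; consequently, by \textbf{P4}, a crossing of $\tau$ into the left half-plane would force $\tau$ to equal its own reflection and thus to terminate at $-1$. A Teichm\"uller computation \eqref{teichmuller_formula} applied to the two components of $\overline{\C}\setminus\tau$ (each of which must, by \textbf{P2}, contain at least one of the poles $\pm 1,\infty$) then produces an arithmetic contradiction with the inner-angle bookkeeping. Hence $\tau$ stays in the closed right half-plane, and its terminal critical point must be the only available one, namely $z_*$, giving (i); the mirror argument yields the analogous trajectory from $-1$ into $-\overline{z_*}$.

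Of the three trajectories from $z_*$, one is now assigned to $1$. By exhaustion of ends at the finite critical points, exactly one of the remaining two must cross the imaginary axis; by the symmetric version of the argument above, it terminates at $-\overline{z_*}$, giving (ii). The last trajectory from $z_*$ has no finite endpoint available (a loop back to $z_*$ is excluded by a second application of \eqref{teichmuller_formula} to the simply connected interior of the loop and the pole configuration inside it), so it extends to $\infty$; the horizontal-asymptotics constraint at $\infty$ together with symmetry (the reflected trajectory from $-\overline{z_*}$ occupies $\arg z\to\pi$) forces $\arg z\to 0$, proving (iii). \textbf{Main obstacle.} The bulk of the work lies in the combinatorial case analysis excluding competing critical-graph topologies: one must carefully enumerate the simply connected subdomains bounded by each hypothetical configuration, compute the inner angles $\theta(p)$ at every vertex, and apply Teichm\"uller's identity \eqref{teichmuller_formula} to derive a contradiction in every alternative, with special care required when several critical trajectories could in principle absorb the horizontal directions at $\infty$.
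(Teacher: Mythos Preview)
Your plan correctly identifies the ingredients (symmetry \textbf{P4}, local data at critical points, Teichm\"uller's formula), but the actual argument has a genuine gap: you never \emph{use} the Boutroux condition~\eqref{eq: boutroux condition}, and without it the conclusion is false. Purely topological bookkeeping cannot distinguish the true critical graph from the configuration in which the trajectory from $1$ runs directly to $\infty$ inside the right half-plane (and the three trajectories from $z_*$ are one to $-\overline{z_*}$ and two to $\infty$). This alternative is Teichm\"uller-consistent; in the paper it is excluded only in Lemma~\ref{lem: trajectories 4}, where the strip-domain conformal map $\Upsilon(z)=\int_{z_*}^z\sqrt{Q}$ would have to send $z_*$ and $1$ to distinct vertical lines, contradicting $\re\Upsilon(1)=0$, which \emph{is} the Boutroux condition. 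Your sentence ``its terminal critical point must be the only available one, namely $z_*$'' simply forgets that $\infty$ is available.

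Two further specific problems. First, if $\tau$ is a single arc from $1$ to $-1$, then $\overline{\C}\setminus\tau$ is connected, not two components, so the Teichm\"uller step you propose there cannot even be set up as written. Second, for (ii) the phrase ``by exhaustion of ends, exactly one of the remaining two must cross the imaginary axis'' hides two separate nontrivial facts: that at most one crosses (the paper's Lemma~\ref{lem: trajectories 2}, proved via a ring-domain argument that again uses $\re\int_{z_*}^1\sqrt{Q}=0$) and that at least one does (Lemma~\ref{lem: trajectories 3}). Finally, for (iii) symmetry alone does not force $\arg z\to 0$ for the unbounded trajectory from $z_*$: the reflected configuration with $\arg z\to\pi$ is equally symmetric, and the paper rules it out only after the main proof using the sign information in~\eqref{eq: half mass}.
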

	
	These trajectories are numerically computed in Figure~\ref{fig: trajectories} for various choices of $\lambda$.
	
	\begin{figure}[t]
		\centering
		\begin{subfigure}[b]{0.32\linewidth}
			\includegraphics[width=\linewidth]{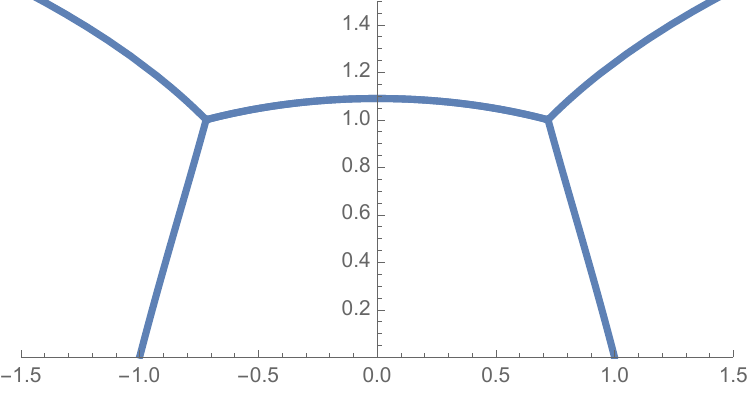}
			\caption*{$\lambda = 2$}
			\label{fig: subfig 1 trajectories}
		\end{subfigure}
		\begin{subfigure}[b]{0.32\linewidth}
			\includegraphics[width=\linewidth]{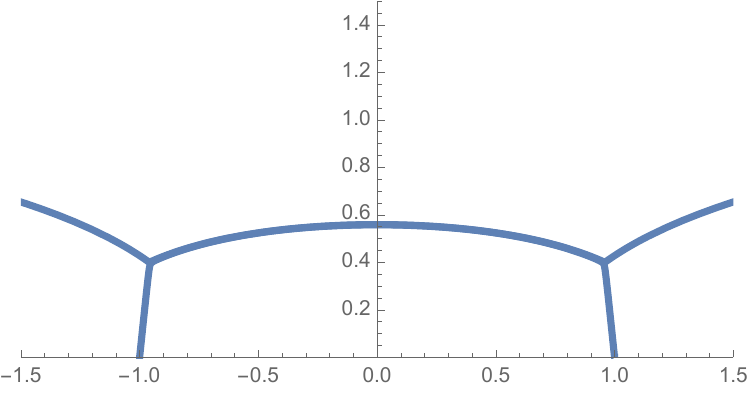}
			\caption*{$\lambda = 5$}
			\label{fig: subfig 2 trajectories}
		\end{subfigure}
		\begin{subfigure}[b]{0.32\linewidth}
			\includegraphics[width=\linewidth]{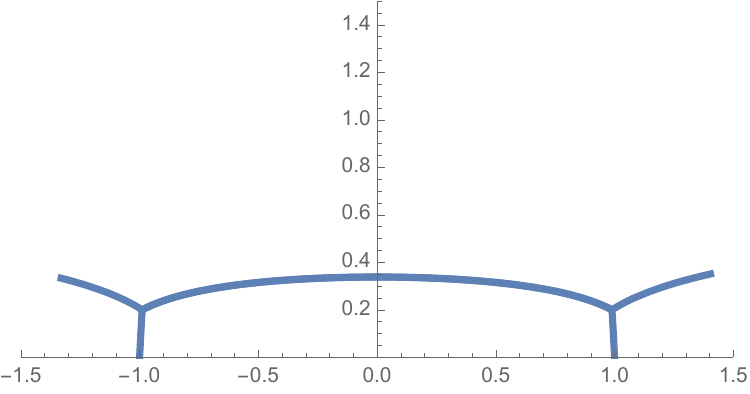}
			\caption*{$\lambda = 10$}
			\label{fig: subfig 3 trajectories}
		\end{subfigure}
		\caption{Trajectories of $-Q_\lambda(z)\, dz^2$.}
		\label{fig: trajectories}
	\end{figure}
	
	We split the proof of Theorem~\ref{theorem: critical graph} into several lemmas.
	
	\begin{lemma}\label{lem: trajectories 1}
		There is at least one trajectory emanating from $z_*$ with endpoint on $\{1,-\overline z_*\}$.
	\end{lemma}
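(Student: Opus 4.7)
My plan is to proceed by contradiction: assume none of the three critical trajectories emanating from $z_*$ terminates at $+1$ or $-\overline z_*$. Then by principle P3 each of the three trajectories must end at one of the remaining critical points, namely $-1$, $\infty$, or $z_*$ itself (the latter option meaning the trajectory is a self-loop at $z_*$).

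The first step is to eliminate most configurations by a ray-counting argument. The simple pole $-1$ admits a unique critical trajectory emanating from it, so at most one ray from $z_*$ can terminate at $-1$. Near $\infty$, which is a pole of order $4$ of $-Q\,dz^2$, exactly two critical trajectories approach, both along horizontal directions; by the symmetry P4 these two form a reflected pair (neither can be fixed by the reflection since $z_*\notin i\R$), so at most one of them originates from $z_*$. Writing the three rays at $z_*$ as $a$ ending at $-1$, $b$ ending at $\infty$, and $c$ self-loops (each consuming two rays), the relation $a+b+2c=3$ with $a,b\le 1$ forces $a+b=1$ and $c=1$: exactly one self-loop at $z_*$, plus a single ray running to $-1$ or to $\infty$. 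By P4 there is a mirror-image self-loop at $-\overline z_*$.

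I would then invoke P2 on the bounded Jordan domain $D$ enclosed by the self-loop at $z_*$: since $\partial D$ contains only the zero $z_*$ among critical points (and no pole), $D$ must trap one of the finite poles $\pm 1$ in its interior. Principle P1 applied to the unique critical trajectory emanating from the trapped pole then implies that this trajectory cannot cross the critical trajectory $\partial D$ away from $z_*$, so it terminates either at $z_*$ or at a zero inside $D$. Termination at $z_*$ contradicts either the standing assumption (when the trapped pole is $+1$) or the fact that the three rays at $z_*$ are already allocated (when the trapped pole is $-1$). The only remaining possibility is that the trapped-pole trajectory ends at the zero $-\overline z_*$, which must therefore lie inside $D$; reflecting via P4 then produces a self-loop at $-\overline z_*$ enclosing $z_*$, so the two disjoint reflected self-loops would each contain the other's base point in its interior, a topological impossibility.

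The main obstacle will be the careful book-keeping in this last step, in particular distinguishing the sub-case in which the non-loop ray at $z_*$ heads to $-1$ from the sub-case in which it heads to $\infty$; in the latter situation one must also account for the auxiliary trajectory connecting $+1$ and $-1$ (forced by the fact that both infinity-endings are now consumed by $z_*$ and $-\overline z_*$) and verify that it is incompatible with a self-loop at $z_*$ trapping a pole. The topological observation that two disjoint Jordan curves related by reflection across $i\R$ cannot be mutually enclosing, combined with P1 and P2, then closes the argument in every sub-case.
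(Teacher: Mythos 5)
There is a genuine gap in the ray-counting step. You assert that at most one of the three critical trajectories from $z_*$ can run to $\infty$, justified by the claim that \emph{exactly} two critical trajectories approach the order-$4$ pole at $\infty$. The general theory only guarantees \emph{at least} two (one along each of the two asymptotic directions); there is no upper bound, and several critical trajectories may approach $\infty$ along the same direction. With $a\le 1$ but $b$ unconstrained, the relation $a+b+2c=3$ no longer forces $c=1$: the configuration $a=0$, $b=3$, $c=0$ (all three rays from $z_*$ extending to $\infty$) remains open and is not addressed anywhere in your argument. This is in fact the hard case. The paper first shows that under the standing assumption every trajectory from $z_*$ stays in the right half plane (a trajectory crossing $i\R$ is, by P4, symmetric and hence must terminate at $-\overline{z_*}$) and that a self-loop is impossible (by P2 the loop would trap $z=1$, and the trajectory emanating from $1$ would then have to reach $z_*$, contradicting the assumption); this leaves exactly the three-rays-to-$\infty$ configuration, which is then excluded by Teichm\"uller's formula \eqref{teichmuller_formula} applied to a pole-free domain bounded by two of the rays and $\infty$, where $\sum_{p\in\partial D}\beta(p)=1$ while $2+\sum_{p\in D}\eta(p)\ge 2$. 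Your proof never reaches this computation, so the lemma is not established.

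A secondary weakness: your self-loop analysis invokes sub-cases that cannot actually occur, because a self-loop based at $z_*$ cannot cross $i\R$ (same symmetry reasoning as above), so the Jordan domain $D$ it bounds lies in the right half plane and can trap only the pole $+1$, not $-1$, and cannot contain $-\overline{z_*}$. Moreover, the argument you offer to dismiss the trapped-pole-$-1$ sub-case (``the three rays at $z_*$ are already allocated'') is not convincing as written: in the allocation $a=1$ a ray from $z_*$ to $-1$ is explicitly permitted, and that ray would \emph{be} the trajectory emanating from the trapped pole, so there is no conflict with the ray budget. Establishing the right-half-plane confinement first, as the paper does, collapses this case analysis and removes the need for the topological argument about mutually enclosing reflected loops.
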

	\begin{proof}
		To get to a contradiction, suppose that there is no trajectory as described. If a trajectory emanating from $z_*$ hits $i\R$, then using Principle P4 we conclude that this trajectory connects $z_*$ to $-\overline z_*$. But the latter cannot occur, so all three trajectories emanating from $z_*$ have to stay in the right half plane and none of them can end at $z=1$.
		
		Also, none of these trajectories from $z_*$ could be a closed loop on $\C$. Indeed, if this were the case, then Principle P2 applied to the bounded domain $D$ determined by this loop would guarantee that $z=1$ is inside the loop. But then the trajectory $\tau$ emerging from $z=1$, by Principle P1, would have to hit $\partial D$. As $\partial D$ is a trajectory and trajectories can only intersect at critical points, this means that $\tau$ would have to connect to the only critical point $z_*\in \partial D$, which we are assuming cannot occur.
		
		So this discussion and Principle P3 yield that all the trajectories from $z_*$ have to extend to $\infty$, and because they have to stay on the upper half plane they all have to do so along angle $0$. These three trajectories determine exactly two domains on the right half plane, whose angle at $\infty$ is exactly $0$. Certainly at least one of these domains is pole-free, and for this one domain
		$$
		\sum \eta(p) \geq 0 \quad \mbox{and} \quad \sum \beta(p)=1,
		$$
		in contradiction to \eqref{teichmuller_formula}. The proof is complete.
	\end{proof}
	
	\begin{lemma}\label{lem: trajectories 2}
		There cannot be two trajectories emanating from $z_*$ and intersecting $i\R$.
	\end{lemma}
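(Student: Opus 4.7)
Suppose for contradiction that two distinct critical trajectories $\tau_1,\tau_2$ emanating from $z_*$ both cross $i\R$. Since the only critical points of $-Q\,dz^2$ are $z_*,-\overline{z_*},\pm 1,\infty$, none of which lies on $i\R$ (recall $z_*=x_*+2i/\lambda$ with $x_*>0$), every such crossing occurs at a non-critical point. By Principle P4 the reflected arc $-\tau_j^*$ is also a trajectory, and it passes through the same non-critical intersection point as $\tau_j$; since distinct trajectories meet only at critical points, we get $\tau_j=-\tau_j^*$. Hence each $\tau_j$ is symmetric about $i\R$, and being a critical trajectory starting at $z_*$ and containing its own reflection, it must terminate at $-\overline{z_*}$.

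The Jordan curve $\tau_1\cup\tau_2$ separates $\C$ into a bounded component $B$ and an unbounded component $A$. Let $\tau_3$ denote the third critical trajectory from $z_*$, which makes angle $2\pi/3$ with $\tau_1,\tau_2$. The plan is to apply the Teichm\"uller formula \eqref{teichmuller_formula} on the Riemann sphere to $A$, using $\eta(z_*)=\eta(-\overline{z_*})=1$, $\eta(\pm 1)=-1$, $\eta(\infty)=-4$. By reflection symmetry the inner angle of $A$ at the two corners $z_*$ and $-\overline{z_*}$ coincide, and it equals either $2\pi/3$ or $4\pi/3$ according to whether $\tau_3$ lies in $B$ or in $A$.

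If $\tau_3\subset B$, the inner angle of $A$ at each corner is $2\pi/3$, so $\beta(z_*)=\beta(-\overline{z_*})=0$ and Teichm\"uller forces $\sum_{p\in A}\eta(p)=-2$; but $A\ni\infty$ already contributes $-4$ and the only remaining finite poles $\pm 1$ contribute non-positive values, so $-2$ is unreachable, a contradiction. If instead $\tau_3\subset A$, the inner angle of $A$ at each corner is $4\pi/3$, giving $\beta=-1$ at each corner and $\sum_{p\in A}\eta(p)=-4$, which is matched only by $A$ containing the single pole $\infty$; consequently $\pm 1\in B$. Now the unique critical trajectory $\sigma$ emerging from the simple pole $1\in B$ cannot stay in $B$ (its interior is zero-free), and by Principle P1 it must hit $\partial B=\tau_1\cup\tau_2$ at a critical point, hence at $z_*$ or $-\overline{z_*}$. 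In the first case $\sigma$ is a fourth trajectory emanating from $z_*$; in the second, Principle P4 produces from $\sigma$ a trajectory from $-1$ to $z_*$, again a fourth trajectory from $z_*$. Either way we contradict the fact that a simple zero of $Q$ emits exactly three critical trajectories, completing the proof.

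The main obstacle is the careful bookkeeping of inner angles at the two corners $z_*,-\overline{z_*}$ so that the Teichm\"uller formula can be applied in both configurations for $\tau_3$; once that is done, the pole/zero count eliminates one configuration outright, while Principle P1 combined with the reflection symmetry P4 eliminates the other by forcing too many trajectories to emanate from the simple zero $z_*$.
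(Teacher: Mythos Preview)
Your argument has a genuine gap in Case~2. After establishing via Teichm\"uller that $\pm 1$ lie in the bounded component $B$, you claim that the trajectory $\sigma$ emerging from the simple pole $1$ ``cannot stay in $B$ (its interior is zero-free)'' and invoke Principle~P1 to force $\sigma$ to hit $\partial B$. But Principle~P1 as stated applies only when the simply connected domain contains \emph{exactly one} pole, whereas $B$ here contains both $1$ and $-1$. Nothing in P1--P4 or in the Teichm\"uller formula prevents $\sigma$ from connecting $1$ directly to $-1$ inside $B$; such a configuration is topologically perfectly consistent, and it is precisely the one the paper isolates as the surviving possibility (see Figure~\ref{fig: lemma trajectories 2 b}).

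The paper rules out this last configuration by an argument that goes beyond trajectory combinatorics: removing the arc $\tau$ from $1$ to $-1$ out of the bounded region produces a ring domain, and the Boutroux condition \eqref{eq: boutroux condition} then forces the conformal modulus of that ring to degenerate (since $|F(z_*)|=1=|F(1)|$), which is impossible. Your proof never invokes the Boutroux condition, and without it the lemma is in fact false for generic $x\in(0,1)$ --- it is exactly the choice $x=x_*$ that makes the ring-domain argument go through. So the missing case is not a technicality but the heart of the proof.
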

	\begin{proof}
		Suppose that there are two such trajectories, say $\tau_1$ and $\tau_2$. By Principle P4, then they both have to connect to $-\overline z_*$, so their union is the boundary of a simply connected domain $D$. By Principles P2 and P4, the poles $z=\pm 1$ have to be in $D$.
		
		Consider now the remaining trajectory $\tau_3$ from $z_*$. If $\tau_3$ emerges inside $D$, then because we already know that $z=\pm 1\in D$ we would not have any critical trajectory extending to $z=\infty$, but this cannot occur. Hence, we have to have that $\tau_3$ extends to $\infty$. Using again Principle P3, we see that the trajectories from $z=z_*,-\overline z_*$ are fully determined as in Figure~\ref{fig: lemma trajectories 2 a}.
		
		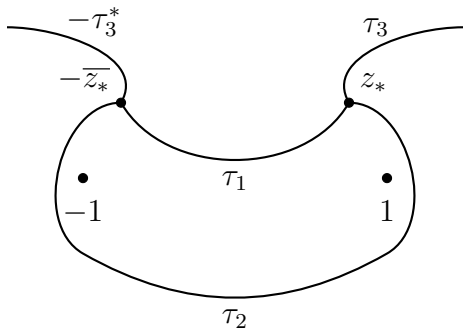
\begin{figure}[t]
			\centering
			\begin{tikzpicture}[scale=1]
			% \draw (-3,-3) to [grid with coordinates] (3,3);
			\draw [thick] (-1.5,1) to [out=60,in=0,edge node={node [pos=0.5,above] {$-\tau_3^*$}}] (-3,2);
			\draw [thick] (1.5,1) to [out=180-60,in=180,edge node={node [pos=0.5,above] {$\tau_3$}}] (3,2);
			\draw [thick] (-1.5,1) to [out=-60,in=180+60,edge node={node [pos=0.5,below] {$\tau_1$}}] (1.5,1);
			\draw [thick] (-1.5,1) to [out=180,in=150] (-2,-1)
			to [out=180+150,in=-150,edge node={node [pos=0.5,below] {$\tau_2$}}] (2,-1)
			to [out=-150+180,in=0] (1.5,1);		 						
			\draw [fill] (1.5,1) circle [radius=0.06];
			\draw [fill] (-1.5,1) circle [radius=0.06];
			\draw [fill] (2,0) circle [radius=0.06];
			\draw [fill] (-2,0) circle [radius=0.06];
			\node [below] at (-2,-0.15) {$-1$};
			\node [below] at (2,-0.15) {$1$};
			\node [above left] at (-1.5,1) {$-\overline{z_*}$};
			\node [above right] at (1.5,1) {$z_*$};
			\end{tikzpicture}
			\caption{The trajectories of $-Qdz^2$ as used in the proof of Lemma~\ref{lem: trajectories 2}.}\label{fig: lemma trajectories 2 a}
		\end{figure}
		
		We now look at the trajectory $\tau$ emanating from $z=1$. By Principle P3, we conclude that $\tau$ has to connect $z=1$ and $z=-1$, so the full critical graph of $-Qdz^2$ is now depicted in Figure~\ref{fig: lemma trajectories 2 b}.
		
		\begin{figure}[t]
			\centering
			\begin{tikzpicture}[scale=1]
			%\draw (-3,-3) to [grid with coordinates] (3,3);
			\draw [thick] (-1.5,1) to [out=60,in=0,edge node={node [pos=0.5,above] {$-\tau_3^*$}}] (-3,2);
			\draw [thick] (1.5,1) to [out=180-60,in=180,edge node={node [pos=0.5,above] {$\tau_3$}}] (3,2);
			\draw [thick] (-1.5,1) to [out=-60,in=180+60,edge node={node [pos=0.5,above] {$\tau_1$}}] (1.5,1);
			\draw [thick] (-1.5,1) to [out=180,in=150] (-2,-1)
			to [out=180+150,in=-150,edge node={node [pos=0.5,below] {$\tau_2$}}] (2,-1)
			to [out=-150+180,in=0] (1.5,1);		 						
			\draw [thick] (-2,0) to [bend right=20,edge node={node [pos=0.5,above] {$\tau$}}] (2,0);
			\draw [fill] (1.5,1) circle [radius=0.06];
			\draw [fill] (-1.5,1) circle [radius=0.06];
			\draw [fill] (2,0) circle [radius=0.06];
			\draw [fill] (-2,0) circle [radius=0.06];
			\node [below] at (-2,-0.15) {$-1$};
			\node [below] at (2,-0.15) {$1$};
			\node [above left] at (-1.5,1) {$-\overline{z_*}$};
			\node [above right] at (1.5,1) {$z_*$};
			\node at (0,-.9) {$\mathcal D$};
			\end{tikzpicture}
			\caption{The (hypothetical) critical graph of $-Qdz^2$ as used in the proof of Lemma~\ref{lem: trajectories 2}.}\label{fig: lemma trajectories 2 b}
		\end{figure}
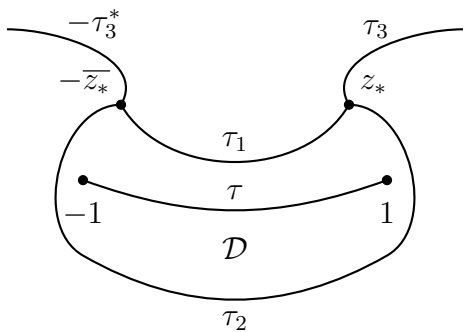 
		
		Consider the domain $\mathcal D$, which is obtained by removing $\tau$ from the domain bounded by $\tau_1\cup\tau_2$, see Figure~\ref{fig: lemma trajectories 2 b}. According to the canonical decomposition of the critical graph \cite[Theorem~B1]{martinez_finkelshtein_silva2016}, the domain $\mathcal D$ is a ring domain, which means that for the function
		$$
		\Upsilon(z)=\int_{1}^z \sqrt{Q(s)}ds
		$$
		and some nonzero real constant $c$, the map $F(z)=e^{c\Upsilon(z)}$ is a conformal map from $\mathcal D$ to an annulus of positive radii $r<R$, and with boundary correspondence $F(\tau)=\partial D_r(0)$ and $F(\tau_2\cup\tau_1)=\partial D_R(0)$. In particular, clearly $F(1)=1$ so $r=|F(1)|=1$ and hence $R=|F(z_*)|>1$. On the other hand, using \eqref{eq: boutroux condition} we get also that $|F(z_*)|=1$, a contradiction, thereby concluding the proof.
	\end{proof}

	\begin{lemma}\label{lem: trajectories 3}
		If there is a trajectory connecting $1$ and $z_*$, then there is a trajectory connecting $z_*$ and $-\overline z_*$.
	\end{lemma}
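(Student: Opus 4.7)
The argument goes by contradiction, paralleling the proof of Lemma~\ref{lem: trajectories 2}. Suppose no critical trajectory connects $z_*$ and $-\overline z_*$; let $\tau_0$ denote the given trajectory from $z_*$ to $1$, and let $\tau_a, \tau_b$ denote the two remaining critical trajectories emanating from $z_*$. One first observes that $\tau_a$ and $\tau_b$ must both remain in the closed right half-plane: if either met $i\R$ at a (necessarily regular) point $p = -\overline p$, then by P4 the reflected arc is a trajectory through the same point, forcing it to coincide with the original by uniqueness, and hence to connect $z_*$ to $-\overline z_*$, contrary to assumption. Moreover, a half-edge count at the simple zero $z_*$ forbids either $\tau_a$ or $\tau_b$ from being a loop, since the three half-edges available at $z_*$ must accommodate $\tau_0$ together with $\tau_a$ and $\tau_b$, and a loop would consume two of them. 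Finally, by P3 each of $\tau_a, \tau_b$ terminates at some critical point, and the alternatives $-1$ and $-\overline z_*$ are excluded by the preceding no-crossing statement while $1$ is excluded because $\tau_0$ is the unique trajectory emanating from the simple pole there. We conclude that both $\tau_a$ and $\tau_b$ extend to $\infty$, necessarily along the positive real direction.

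Together with the reflected arcs $-\tau_0^*, -\tau_a^*, -\tau_b^*$, the critical graph then consists of six arcs with five vertices ($z_*, -\overline z_*, \pm 1, \infty$), so Euler's formula on the sphere yields exactly three complementary domains: a slit disk $\mathcal{D}_1\subset\{\re z>0\}$ bounded by $\tau_a\cup\tau_b$ and carrying the internal slit $\tau_0\cup\{1\}$; its mirror image $\mathcal{D}_3$; and the remaining domain $\mathcal{D}_2$, which contains $i\R$ and whose topological boundary pinches at $\infty$. A local analysis in the chart $\zeta = 1/z$ shows that the four separatrices $\tau_a,\tau_b,-\tau_a^*,-\tau_b^*$ all approach $\zeta = 0$ tangent to the real $\zeta$-axis, with $\tau_a$ and $\tau_b$ attached from opposite sides of the positive half and $-\tau_a^*, -\tau_b^*$ attached from opposite sides of the negative half. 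Consequently $\mathcal{D}_2$ touches $\zeta = 0$ through two end sectors (the upper and lower $\zeta$-half-planes), each of inner angle $\pi$, while the zeros $z_*, -\overline z_*$ appear as corners of $\mathcal{D}_2$ with inner angle $2\pi/3$.

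The contradiction is then extracted from the Teichm\"uller formula \eqref{teichmuller_formula} applied to $\mathcal{D}_2$. The interior of $\mathcal{D}_2$ contains no critical points, so the right-hand side equals $2$; meanwhile the boundary corners $z_*, -\overline z_*$ contribute $\beta=0$ each, and $\infty$ contributes $\beta = 1 + \pi/\pi = 2$ for each of its two appearances, giving $\sum_{p \in \partial\mathcal{D}_2}\beta(p) = 0 + 0 + 2 + 2 = 4 \ne 2$. This impossibility completes the proof. The main technical delicacy is the correct identification of the complementary domain structure together with the computation of inner angles at the order-four pole $\infty$, for which one must pass to a local chart and carefully track the two-fold occurrence of $\infty$ on $\partial \mathcal{D}_2$; an alternative route, closer in spirit to Lemma~\ref{lem: trajectories 2}, would be to apply a ring-domain/conformal-mapping argument to $\mathcal{D}_2$ using \eqref{eq: boutroux condition} and \eqref{eq: half mass} to compute moduli and reach the same contradiction.
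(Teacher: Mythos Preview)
Your overall strategy---showing that $\tau_a,\tau_b$ must both escape to $\infty$ along the positive real direction, and then extracting a contradiction from the Teichm\"uller formula---is the same as the paper's, and the preliminary arguments (no crossing of $i\R$, no loops, exclusion of $\pm 1$ and $-\overline z_*$ as endpoints) are correct. In the configuration you describe, your choice of domain $\mathcal D_2$ is precisely the paper's choice, and your computation $\sum\beta=4\neq 2$ matches theirs.

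There is, however, a genuine gap. After establishing that $\tau_a,\tau_b$ both go to $\infty$, you assert that $\mathcal D_1$ (the finger bounded by $\tau_a\cup\tau_b$) carries the slit $\tau_0\cup\{1\}$. This is not forced: there is a second configuration in which $\tau_0$ and the pole $1$ lie \emph{outside} the finger, so that the slit sits inside $\mathcal D_2$ rather than $\mathcal D_1$. You have not ruled this out, and the paper treats both configurations explicitly (see the two panels of Figure~\ref{fig: lemma trajectories 3 a}). In the second configuration the paper applies Teichm\"uller to the finger $\mathcal D_1$ itself, obtaining $\sum\beta=1\neq 2$.

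The gap is easy to repair within your framework: in the second configuration your domain $\mathcal D_2$ acquires the additional slits $\tau_0$ and $-\tau_0^*$, which introduce extra boundary corners at $\pm 1$ (tips of slits, inner angle $2\pi$, $\beta=0$) and additional visits to $z_*,-\overline z_*$ (each with $\beta=0$). The two angle-$\pi$ occurrences of $\infty$ on $\partial\mathcal D_2$ are unchanged, so the sum is still $4$, and the contradiction persists. But as written your argument asserts a specific domain decomposition without justification, so you should either add this second case or observe at the outset that the Teichm\"uller count for $\mathcal D_2$ is insensitive to which side of the finger the slit lies on.
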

	\begin{proof}
		Let $\tau_1$ be the trajectory connecting $z_*$ and $z=1$, and $\tau_2$ and $\tau_3$ the remaining two trajectories emanating from $z_*$. The proof will again proceed by contradiction. If we assume that there is no trajectory connecting $z_*$ to $-z_*$, then $\tau_1$ and $\tau_2$ have to extend to $z=\infty$ horizontally on the right-half plane. Using the symmetry on Principle P4, there are only two possibilities left for the critical graph of $Qdz^2$, depending on whether $z=1$ belongs or not to the domain bounded by $\tau_2$ and $\tau_3$ on the right half plane. These two possibilities are displayed in Figure~\ref{fig: lemma trajectories 3 a}.
		
		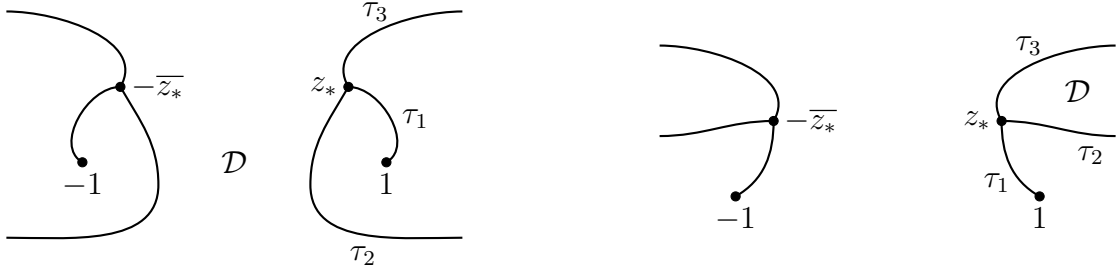
\begin{figure}[t]
			\begin{subfigure}[c]{0.5\textwidth}
				\centering
				\begin{tikzpicture}[scale=1]
				%\draw (-3,-3) to [grid with coordinates] (3,3);
				%
				\draw [thick] (-1.5,1) to [out=60,in=0] (-3,2);
				\draw [thick] (1.5,1) to [out=180-60,in=180,edge node={node [pos=0.5,above] {$\tau_3$}}] (3,2);
				\draw [thick] (-1.5,1) to [out=-60, in=90] (-1,-0.3)
				to [out=90+180,in=0] (-3,-1);
				\draw [thick] (1.5,1) to [out=180+60, in=180-90] (1,-0.3)
				to [out=-90,in=180,edge node={node [pos=0.5,below] {$\tau_2$}}] (3,-1);
				\draw [thick] (-1.5,1) to [out=180,in=150] (-2,0);	
				\draw [thick] (1.5,1) to [out=180-180,in=180-150,edge node={node [pos=0.5,right] {$\tau_1$}}] (2,0);	
				\draw [fill] (1.5,1) circle [radius=0.06];
				\draw [fill] (-1.5,1) circle [radius=0.06];
				\draw [fill] (2,0) circle [radius=0.06];
				\draw [fill] (-2,0) circle [radius=0.06];
				\node [below] at (-2,0) {$-1$};
				\node [below] at (2,0) {$1$};
				\node [right] at (-1.5,1) {$-\overline{z_*}$};
				\node [left] at (1.5,1) {$z_*$};
				\node at (0,0) {$\mathcal D$};
				\end{tikzpicture}
			\end{subfigure}%
			\begin{subfigure}[c]{0.5\textwidth}
				\centering
				\begin{tikzpicture}[scale=1]
				%\draw (-3,-3) to [grid with coordinates] (3,3);
				%
				\draw [thick] (-1.5,1) to [out=60,in=0] (-3,2);
				\draw [thick] (1.5,1) to [out=180-60,in=180,edge node={node [pos=0.5,above] {$\tau_3$}}] (3,2);
				\draw [thick] (-1.5,1) to [out=180+90, in=180-150] (-2,0);
				\draw [thick] (1.5,1) to [out=-90,in=150,edge node={node [pos=0.75,left] {$\tau_1$}}] (2,0);
				\draw [thick] (-1.5,1) to [out=180,in=0] (-3,0.8);	
				\draw [thick] (1.5,1) to [out=180-180,in=180,edge node={node [pos=0.8,below] {$\tau_2$}}] (3,0.8);	
				\draw [fill] (1.5,1) circle [radius=0.06];
				\draw [fill] (-1.5,1) circle [radius=0.06];
				\draw [fill] (2,0) circle [radius=0.06];
				\draw [fill] (-2,0) circle [radius=0.06];
				\node [below] at (-2,0) {$-1$};
				\node [below] at (2,0) {$1$};
				\node [right] at (-1.5,1) {$-\overline{z_*}$};
				\node [left] at (1.5,1) {$z_*$};
				\node at (2.5,1.4) {$\mathcal D$};
				\end{tikzpicture}
			\end{subfigure}
			\caption{The hypothetical possibilities for the critical graph of $-Qdz^2$, as derived in the proof of Lemma~\ref{lem: trajectories 3}.}\label{fig: lemma trajectories 3 a}
		\end{figure} 
		
		In either of the two situations, we consider the domain $\mathcal D$ on the critical graph uniquely determined by the condition that $\partial \mathcal D\cap \tau_1=\{z_*\}$. This domain is also represented in Figure~\ref{lem: trajectories 3}. Applying \eqref{teichmuller_formula} to $\mathcal D$, we see that in the situation represented in Figure~\ref{fig: lemma trajectories 3 a}-left, we have
		$$
		\sum_{p\in \partial D} \beta(p)=4,\quad \sum_{p\in \mathcal D}\eta(p)=0,
		$$
		whereas on the situation depicted in Figure~\ref{fig: lemma trajectories 3 a}-right,
		$$
		\sum_{p\in \partial D} \beta(p)=1,\quad \sum_{p\in \mathcal D}\eta(p)=0.
		$$
		Hence, neither of these situations can happen, concluding the contradiction argument and the proof.
	\end{proof}
	
	\begin{lemma}\label{lem: trajectories 4}
		There is a trajectory connecting $1$ and $z_*$.
	\end{lemma}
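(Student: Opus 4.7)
The plan is to argue by contradiction, in the spirit of the preceding lemmas. Suppose no trajectory from $z_*$ terminates at $1$. By Lemma~\ref{lem: trajectories 1} there is then a trajectory $\tau_1$ from $z_*$ to $-\overline z_*$, and by Lemma~\ref{lem: trajectories 2} this is the unique trajectory from $z_*$ intersecting $i\R$. Consequently the remaining two trajectories $\tau_2,\tau_3$ from $z_*$ stay in the closed right half-plane and, by our assumption, have no endpoint in $\{1,-\overline z_*\}$.

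Next I would rule out that either $\tau_2$ or $\tau_3$ is a closed loop based at $z_*$. If, say, $\tau_2$ were such a loop, its bounded interior $\mathcal D$ would be simply connected, and since $-1$ lies in the left half-plane, Principle P2 forces $1\in\mathcal D$. Principle P1 applied to the trajectory emanating from the pole $1$ inside $\mathcal D$ then forces it to end either at a zero inside $\mathcal D$ (none exists) or at a critical point of $\partial\mathcal D$; the only such point is $z_*$, producing a trajectory from $1$ to $z_*$ contrary to the assumption. Hence both $\tau_2,\tau_3$ extend to $\infty$, and from the expansion $Q(z)\sim -\lambda^2/4$ as $z\to\infty$ they must do so along the direction $\arg z = 0$.

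Now consider the trajectory $\sigma$ emanating from the pole $1$. By Principle P3 it terminates at another critical point. The endpoint $z_*$ is excluded by hypothesis, and the endpoint $-\overline z_*$ is excluded as well, since by Principle P4 the reflection $-\sigma^*$ would then be a trajectory from $-1$ to $z_*$, giving a second trajectory of $z_*$ crossing $i\R$ in violation of Lemma~\ref{lem: trajectories 2}. So $\sigma$ ends at $-1$ or at $\infty$. In either remaining subcase the full critical graph (together with its reflection across $i\R$) is topologically determined, and I would aim for a contradiction by applying the Teichm\"uller identity \eqref{teichmuller_formula} to a carefully selected simply connected component of $\overline\C$ minus this graph, along the lines of the proof of Lemma~\ref{lem: trajectories 3}.

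The hardest part will be the bookkeeping in this final step. Because $\infty$ is a pole of order $4$, several of the trajectories $\tau_2,\tau_3,\sigma$ (and, in the subcase $\sigma\to\infty$, also $-\sigma^*$) can approach $\infty$ along the same asymptotic direction $\arg z = 0$ with different limiting values of $\im z$; isolating the inner angle $\theta(\infty)$ of the chosen domain at $\infty$ thus requires a careful local analysis there. A cleaner alternative, following the strategy of Lemma~\ref{lem: trajectories 2}, is to identify a ring domain on the critical graph and derive a conformal-modulus contradiction using the Boutroux-type relation $\int_{z_*}^{1} Q^{1/2}(s)\,ds = \pi i/2$ from \eqref{eq: half mass}; for instance, in the subcase $\sigma:1\to -1$, the domain bounded on the inside by $\sigma$ and on the outside by $\tau_2\cup\tau_1\cup(-\tau_2^*)$ is such a ring, and \eqref{eq: half mass} forces its inner and outer conformal radii to coincide, yielding the contradiction.
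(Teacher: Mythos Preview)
Your opening moves match the paper's exactly: assume no trajectory $z_*\to 1$, use Lemma~\ref{lem: trajectories 1} to produce $\tau_1:z_*\to-\overline z_*$, use Lemma~\ref{lem: trajectories 2} to force the remaining $\tau_2,\tau_3$ to stay in the right half-plane, rule out loops, and send both to $\infty$ along angle $0$. From this point the paper takes a shorter path that sidesteps your case analysis on $\sigma$ entirely.

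The paper applies Teichm\"uller~\eqref{teichmuller_formula} not to a face of the \emph{full} critical graph but to the simply connected region $\mathcal H$ bounded by $\tau_2\cup\tau_3$ (the component not containing $\tau_1$). Here $\sum\beta=\beta(z_*)+\beta(\infty)=1$, so $\sum_{\mathcal H}\eta=-1$, forcing $1\in\mathcal H$. The domain $\mathcal H$ is then a strip domain: $\Upsilon(z)=\int_{z_*}^{z}\sqrt{Q(s)}\,ds$ maps it conformally onto a vertical strip $\{c_1<\re w<c_2\}$, sending $\tau_2\cup\tau_3$ and the trajectory through $1$ to the two distinct boundary lines. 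Since $\Upsilon(z_*)=0$ one needs $\re\Upsilon(1)\neq 0$, but the Boutroux condition~\eqref{eq: boutroux condition} gives $\re\Upsilon(1)=0$. Contradiction.

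Your route has a genuine gap. Once one knows $1\in\mathcal H$, which lies entirely in the right half-plane, the subcase $\sigma:1\to-1$ is \emph{vacuous}: $\sigma$ would have to leave $\mathcal H$ by crossing $\tau_2\cup\tau_3$, which trajectories cannot do except at critical points. So the only live subcase is $\sigma\to\infty$, which you leave untreated. Conversely, without first locating $1$ you have no grounds for the ring-domain claim: you assert the ``outside'' boundary is $\tau_2\cup\tau_1\cup(-\tau_2^*)$, but that presupposes $1$ and $-1$ sit in the reflection-symmetric face rather than in the lateral faces between $\tau_2$ and $\tau_3$---and Teichm\"uller in fact forces the latter. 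Even granting the topology, $\sigma$ is an arc with interior endpoints, and while the face minus $\sigma$ is a topological annulus, you have not checked that $\infty$ is absent from its boundary, which is what distinguishes a ring domain from a strip domain. The paper's strip-domain argument avoids all of this and needs only~\eqref{eq: boutroux condition}.
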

	\begin{proof}
		Again to get to a contradiction, suppose there is no such trajectory. By Lemma~\ref{lem: trajectories 1} there has to be a trajectory $\tau_1$ connecting $z_*$ and $-\overline z_*$, and then by Lemma~\ref{lem: trajectories 2} and Principle P3, the other two trajectories $\tau_2$ and $\tau_3$ stay on the right half plane and have to extend to $\infty$ with angle $0$. 
		
		These trajectories $\tau_2$ and $\tau_3$ determine a simply connected domain $\mathcal H$ on the right half plane, and this domain does not contain $\tau_1$. Hence, applying \eqref{teichmuller_formula} to this domain, we arrive at
		$$
		\sum_{p\in \partial D}\beta(p)=\beta(z_*)+\beta(\infty)=1,
		$$
		and thus $\mathcal H$ must contain the only pole $z=1$ on the right half plane. This means that the critical graph is as depicted in Figure~\ref{fig: lemma trajectories 4 a}.
		
		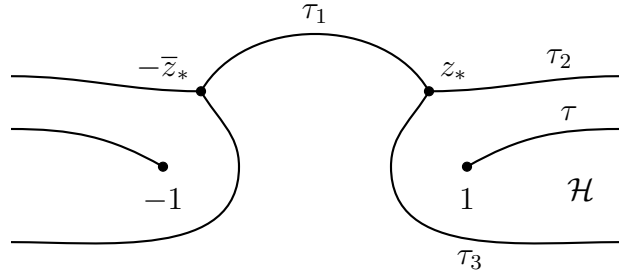
\begin{figure}[t]
			\centering
			\begin{tikzpicture}[scale=1]
			%\draw (-3,-3) to [grid with coordinates] (3,3);
			\draw [thick] (-1.5,1) to [out=60,in=180-60,edge node={node [pos=0.5,above] {$\tau_1$}}] (1.5,1);
			\draw [thick] (1.5,1) to [out=180+60,in=180-90] (1,0)
			to [out=180-270,in=180,edge node={node [pos=0.5,below] {$	\tau_3$}}] (4,-1);
			\draw [thick] (-1.5,1) to [out=-60,in=90] (-1,0)
			to [out=270,in=0] (-4,-1);
			\draw [thick] (1.5,1) to [out=0,in=180,edge node={node [pos=0.7,above] {$\tau_2$}}] (4,1.2);		 				
			\draw [thick] (-1.5,1) to [out=180,in=0] (-4,1.2);		
			\draw[thick] (2,0) to [out=30,in=180,edge node={node [pos=0.7,above] {$\tau$}}] (4,0.5);
			\draw[thick] (-2,0) to [out=180-30,in=0] (-4,0.5);
			\draw [fill] (1.5,1) circle [radius=0.06];
			\draw [fill] (-1.5,1) circle [radius=0.06];
			\draw [fill] (2,0) circle [radius=0.06];
			\draw [fill] (-2,0) circle [radius=0.06];
			\node [below] at (-2,-0.15) {$-1$};
			\node [below] at (2,-0.15) {$1$};
			\node [above left] at (-1.5,1) {$-\overline{z}_*$};
			\node [above right] at (1.5,1) {$z_*$};
			\node at (3.5,-0.3) {$\mathcal H$};
			\end{tikzpicture}
			\caption{The trajectories of $-Qdz^2$ as used in the proof of Lemma~\ref{lem: trajectories 4}.}\label{fig: lemma trajectories 4 a}
		\end{figure}
		
		We continue focusing on the domain $\mathcal H$. According to the canonical decomposition of the critical graph \cite[Theorem~B1]{martinez_finkelshtein_silva2016}, this domain $\mathcal H$ is a strip domain, which means that for some branch of the square root, the function
		$$
		\Upsilon(z)=\int_{z_*}^z\sqrt{Q(s)}ds
		$$
		is a conformal map from $\mathcal H$ to a strip of the form
		$$
		\mathcal S=\{z \in \C \; \mid \; c_1< \re z <c_2\},
		$$
		and $\Upsilon$ extends continuously to the boundary of $\mathcal H$, hence mapping $\tau_2\cup\tau_3$ and $\tau$ to distinct connected components of the strip $\mathcal S$.
		
		Clearly, $\Upsilon(z_*)=0$ and because of the boundary correspondence just explained we must then have $\re \Upsilon(1)\neq 0$. However, using \eqref{eq: boutroux condition} we actually see that $\re\Upsilon(1)=0$, a contradiction. The proof is complete.
	\end{proof}
	
	\begin{proof}[Proof of Theorem~\ref{theorem: critical graph}]
		The symmetry under reflection is nothing but Principle P4.
		
		A combination of Lemmas~\ref{lem: trajectories 3} and \ref{lem: trajectories 4} immediately yield (i) and (ii). From Lemma~\ref{lem: trajectories 2} we see that the remaining trajectory emanating from $z_*$, say $\tau_3$, has to stay on the right half plane, so it has to extend to $\infty$ with angle $0$. This concludes the proof of (iii).
	\end{proof}
	
	In principle, the trajectory in part (iii) of Theorem~\ref{theorem: critical graph} could extend to $\infty$ going below $z=-1$, which would result on the critical graph displayed in Figure~\ref{fig: trajectories almost final}, left (that we call case (i)), in contrast with Figure~\ref{fig: trajectories almost final}, right (that we call case (ii)). We now justify why case (ii) instead of (i) takes place, which corresponds exactly with the numerical outputs in Figure~\ref{fig: trajectories}.
	
	\begin{figure}[t]
		\begin{subfigure}[c]{0.5\textwidth}
			\centering
			\begin{tikzpicture}[scale=.8]
			%\draw (-3,-3) to [grid with coordinates] (3,3);
			\draw [thick] (-1.5,1) to [out=60,in=180-60,edge node={node [pos=0.5,above] {$\tau_1$}}] (1.5,1);
			\draw [thick] (1.5,1) to [out=180+60,in=180-90] (1,0)
			to [out=180-270,in=180,edge node={node [pos=0.5,below] {$	\tau_3$}}] (4,-1);
			\draw [thick] (-1.5,1) to [out=-60,in=90] (-1,0)
			to [out=270,in=0] (-4,-1);
			\draw [thick] (1.5,1) to [out=0,in=100,edge node={node [pos=0.5,right] {$\tau_2$}}] (2,0);		 				
			\draw [thick] (-1.5,1) to [out=180,in=80] (-2,0);		
			\draw [fill] (1.5,1) circle [radius=0.06];
			\draw [fill] (-1.5,1) circle [radius=0.06];
			\draw [fill] (2,0) circle [radius=0.06];
			\draw [fill] (-2,0) circle [radius=0.06];
			\node [below] at (-2,-0.15) {$-1$};
			\node [below] at (2,-0.15) {$1$};
			\node [above left] at (-1.5,1) {$-\overline{z}_*$};
			\node [above right] at (1.5,1) {$z_*$};
			\node at (3.5,-0.3) {$\mathcal H$};
			\end{tikzpicture}
		\end{subfigure}%
		\begin{subfigure}[c]{0.5\textwidth}
			\centering
			\begin{tikzpicture}[scale=.8]
			%\draw (-3,-3) to [grid with coordinates] (3,3);
			\draw [thick] (-1.5,1) to [out=10,in=170,edge node={node [pos=0.5,above] {$\tau_1$}}] (1.5,1);
			\draw [thick] (1.5,1) to [out=50,in=180,edge node={node [pos=0.5,above] {$\tau_3$}}] (4,1.7);
			\draw [thick] (-1.5,1) to [out=180-50,in=0] (-4,1.7);								   
			\draw [thick] (1.5,1) to [out=290,in=100,edge node={node [pos=0.5,left] {$\tau_2$}}] (2,-0.5);		 				
			\draw [thick] (-1.5,1) to [out=180-290,in=180-100] (-2,-0.5);		
			\draw [fill] (1.5,1) circle [radius=0.06];
			\draw [fill] (-1.5,1) circle [radius=0.06];
			\draw [fill] (2,-0.5) circle [radius=0.06];
			\draw [fill] (-2,-0.5) circle [radius=0.06];
			\node [below] at (-2,-0.5) {$-1$};
			\node [below] at (2,-0.5) {$1$};
			\node [left] at (-1.5,1) {$-\overline{z}_*$};
			\node [right] at (1.5,1) {$z_*$};
			\node at (3,-0.3) {$\mathcal H$};
			\end{tikzpicture}
		\end{subfigure}%
		\caption{The two possible configurations for the critical graph of $Qdz^2$ after the proof of Theorem~\ref{theorem: critical graph}. It turns out that the only correct configuration is actually the one on the right, as shown in the comments after the proof. We invite the reader to compare with the numerical output produced in Figure~\ref{fig: trajectories}. }\label{fig: trajectories almost final}
	\end{figure}
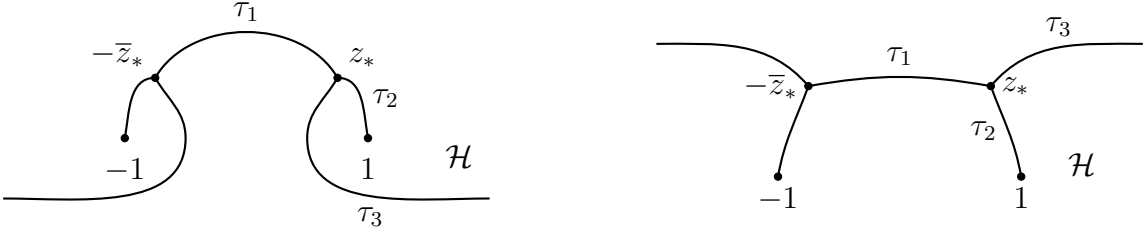

	In either case, let $Q^{1/2}$ be the branch of the square root defined by the asymptotics \eqref{eq: branch square root} and with branch cut being 
	$$
	\tau=\tau_1\cup\tau_2\cup \widehat \tau_2,
	$$
	where $\widehat \tau_2$ is the reflection of $\tau_2$ onto the imaginary axis, see Figure~\ref{fig: trajectories almost final}. We orient this branch cut from $-1$ to $1$. In either situation in Figure~\ref{fig: trajectories almost final}, consider the domain $\mathcal H$ bounded by $\tau\cup\tau_3\cup\widehat{\tau_3}$, with $\widehat \tau_3$ being the reflection of $\tau_3$ onto the imaginary axis. In other words, in case (i) $\mathcal H$ is the domain ``above'' the critical graph, and in case (ii) it is the domain ``below'' the critical graph, see Figure~\ref{fig: trajectories almost final}.
	
	From the general theory, we know that the function
	$$
	\Psi(z)=\int_{1}^zQ^{1/2}(s)\; ds ,\quad z\in \mathcal H,
	$$
	is a conformal map from $\mathcal H$ to either the left or the right half plane. By looking at the asymptotics \eqref{eq: branch square root}, we see that
	$$
	\Psi(z)=-\frac{i\lambda}{2}z\left(1+\boh(1)\right),
	$$
	so when $z\to \infty$ along $\mathcal H\cap i\R$ we must either have that $\re \Psi(z)\to +\infty$ (in case (i)) or $\re \Psi(z)\to -\infty$ (in case (ii)). This means that the image $\Psi(\mathcal H)$ is either the right or left half plane, corresponding to cases (i) or (ii), respectively.
	
	By boundary correspondence, we must then have $\Psi_+(\tau_2)\subset i\R_+$ in case (i) and $\Psi_+(\tau_2)\subset i\R_-$ in case (ii), which implies that $\im \Psi_+(z_*)> 0$ and $\im \Psi_+(z_*)< 0$, respectively. But from \eqref{eq: half mass} we know that $\Psi_+(z_*)=-\pi i/2$, so case (ii) has to be taking place.
	
	To conclude this section, we now prove Theorem~\ref{thm: support zero distribution} and the first part of Theorem~\ref{thm: density zero distribution}, concerning the construction of $\mu_*$.
	
	\begin{proof}[Proof of Theorem~\ref{thm: support zero distribution}]
		Set $\tau_2=\gamma_2$ as the trajectory as labeled in Figure~\ref{fig: trajectories almost final}, right, and $\gamma_1$ to be its reflection onto the imaginary axis. Then \eqref{eq: pretrajectory1}--\eqref{eq: pretrajectory2} hold by the definition of a critical trajectory. The uniqueness follows by the uniqueness of the critical graph of $Q^{1/2}dz$, and the fact that $\gamma_2$ is the only trajectory connecting $1$ and $z_*$.
	\end{proof}
	
	For the rest of the paper, we now use the branch cut structure along $\gamma=\gamma_1\cup\gamma_2$, as discussed right after the statement of Theorem~\ref{thm: support zero distribution}.
	
	\begin{proof}[Proof of Theorem~\ref{thm: density zero distribution}]
		By the definition of $\gamma_1$ and $\gamma_2$ as trajectories, we know that $\mu_*$ is a real measure. Also, because its density does not vanish on each of the arcs, we also know that $\mu_*$ cannot change sign in each of these arcs. From \eqref{eq: half mass},
		$$
		\mu_*(\gamma_2)=\frac{1}{\pi i}\int_{z_*}^1 Q^{1/2}_+(s)ds=\frac{1}{2},
		$$
		so $\mu_*$ has to be positive along $\gamma_2$. By symmetry $\mu_*(\gamma_1)=\mu_*(\gamma_2)$ (see for instance Proposition~\ref{prop: symmetry integrals}), so $\mu_*$ has to be a probability measure.
		
		To show that the shifted Cauchy transform satisfies the algebraic equation, we use that $Q^{1/2}_+=-Q^{1/2}_-$ along $\gamma_1\cup\gamma_2$ to write
		$$
		C^{\mu_*}(z)=\frac{1}{2 \pi i}\oint_C \frac{Q^{1/2}(s)}{s-z}ds,\quad z\in \C\setminus (\gamma_1\cup\gamma_2),
		$$
		where $C$ is a bounded contour that encircles $\gamma_1\cup\gamma_2$ in the clockwise direction and does not encircle $z$. Using \eqref{eq: branch square root} to compute residues,
		$$
		C^{\mu_*}(z) = \res_{s=z}\frac{Q^{1/2}(s)}{s-z} + \res_{s=\infty}\frac{Q^{1/2}(s)}{s-z}= Q^{1/2}(z) + \frac{i \lambda}{2},
		$$
		which is equivalent to \eqref{eq: spectral curve 1}. 
	\end{proof}

%--------------------------------------------------------------------------
% RH 1
%--------------------------------------------------------------------------
\section{Asymptotic Analysis - Part I}\label{sec: rhp}
	The goal of this section is to introduce the Fokas-Its-Kitaev Riemann-Hilbert formulation \cite{fokas1992isomonodromy} of the kissing polynomials, and to start the implementation of the Deift-Zhou nonlinear steepest descent analysis \cite{deift1999strong,deift1999uniform}, which will allow us to eventually provide uniform asymptotics of the kissing polynomials. 
	For convenience, we will use the notation
	$$
	E_{12}:=\begin{pmatrix}
	0 & 1 \\ 
	0 & 0
	\end{pmatrix}
	\quad
	\mbox{and}
	\quad 
	E_{21}:=\begin{pmatrix}
	0 & 0 \\
	1 & 0
	\end{pmatrix}.
	$$
	
	Define a contour $\Gamma_Y:= \gamma_1\cup\hat{\gamma}\cup\gamma_2$, oriented as in Figure~\ref{fig: Gamma Y}, where $\gamma_1$ and $\gamma_2$ are as in Theorem~\ref{thm: support zero distribution}. The contour $\widehat{\gamma}$ connects $-\overline{z_*}$ to $z_*$ and lies entirely within the sector determined by the trajectories of $Q\,dz^2$ which is above the trajectory $\tau_1$, see Figure~\ref{fig: Gamma Y}.
	\begin{figure}[t]
		\centering
		\begin{tikzpicture}[scale=1]
		%\draw (-3,-3) to [grid with coordinates] (3,3);
		\draw [dashed,thick] (-1.5,1) to [out=10,in=170,edge node={node [pos=0.5,below] {$\tau_1$}}] (1.5,1);
		\draw [dashed,thick] (1.5,1) to [out=50,in=180] (4,1.7);
		\draw [dashed,thick] (-1.5,1) to [out=180-50,in=0] (-4,1.7);								   
		\draw [ultra thick,postaction={mid arrow={black,scale=1.5}}] (1.5,1) to [out=290,in=100,edge node={node [pos=0.5,right] {$\gamma_2$}}] (2,-0.5);		 				
		\draw [ultra thick,postaction={rmid arrow={black,scale=1.5}}] (-1.5,1) to [out=180-290,in=180-100,edge node={node [pos=0.5,left] {$\gamma_1$}}] (-2,-0.5);	
		\draw [ultra thick,postaction={mid arrow={black,scale=1.5}}] (-1.5,1) to [bend left=60,edge node={node [pos=0.5,above] {$\widehat \gamma$}}] (1.5,1);
		\draw [fill] (1.5,1) circle [radius=0.06];
		\draw [fill] (-1.5,1) circle [radius=0.06];
		\draw [fill] (2,-0.5) circle [radius=0.06];
		\draw [fill] (-2,-0.5) circle [radius=0.06];
		\node [below] at (-2,-0.5) {$-1$};
		\node [below] at (2,-0.5) {$1$};
		\node [left] at (-1.5,1) {$-\overline{z}_*$};
		\node [right] at (1.5,1) {$z_*$};
		\end{tikzpicture}
		\caption{The contour $\Gamma_Y=\gamma_1\cup\widehat\gamma\cup\gamma_2$ in solid lines, and the trajectories of $-Q\,dz^2$ that are not in $\Gamma_Y$ in dashed lines, compare with Figure~\ref{fig: trajectories almost final}. }\label{fig: Gamma Y}
	\end{figure}
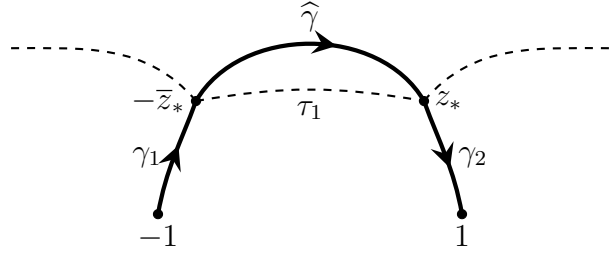	
	
	%	\begin{figure}[H]
	%		\centering
	%		\begin{tikzpicture}[scale=1.5]
	%			\draw [ultra thick,postaction={mid arrow=black}] (-2,0) to [bend left=12] (-1.75, 1.2);
	%			\draw [ultra thick,postaction={mid arrow=black}] (1.75,1.2) to [bend left=12] (2, 0);
	%			\draw [ultra thick, dashed,postaction={mid arrow=black}] (-1.75,1.2) to [bend left =50] (1.75,1.2);
	%			\draw [fill] (1.75,1.2) circle [radius=0.08];
	%			\draw [fill] (-1.75,1.2) circle [radius=0.08];
	%			\draw [fill] (2,0) circle [radius=0.08];
	%			\draw [fill] (-2,0) circle [radius=0.08];
	%			\node [below] at (-2,-0.15) {$-1$};
	%			\node [below] at (2,-0.15) {$1$};
	%			\node [above left] at (-1.9,1.2) {$-\overline{z_*}$};
	%			\node [above right] at (1.9,1.2) {$z_*$};
	%			\node [above] at (0,2.1) {$\hat{\gamma}$};
	%			\node [left] at (1.75,0.6) {$\gamma_2$};
	%			\node [right] at (-1.75,0.6) {$\gamma_1$};
	%		\end{tikzpicture}
	%		\caption{The contour $\Gamma_Y$.}
	%		\label{fig: Gamma Y}
	%	\end{figure}
	
	The Riemann-Hilbert problem for the kissing polynomials is formulated as follows. We seek a matrix function $Y:\C\setminus \Gamma_Y \to \mathbb{C}^{2\times 2}$ such that
	\begin{subequations}\label{eq: original RHP}
		\begin{alignat}{2}
		&Y(z) \text{ is analytic in } \mathbb{C}\setminus \Gamma_Y \\
		&Y_+(z) = Y_-(z)J_Y(z),\quad J_Y(z):= I +e^{-nV(z)}E_{12}, \qquad \qquad && z \in \Gamma_Y, \label{eq: Y jump}\\
		&Y(z) = \left(I + \mathcal{O}\left(\frac{1}{z}\right)\right) z^{n\sigma_3}, \qquad && z \to \infty.
		\end{alignat}
	\end{subequations}
	Here, $\sigma_3=\diag(1,-1)$ is the third Pauli matrix and $V(z)=-i\lambda z$ is as in \eqref{eq: varying weight kissing polynomials}.
	
	By the now standard theory, it follows that the solution to this problem, if it exists, is unique. The existence of $Y$ is equivalent to the existence of the kissing polynomial $p_n^\lambda$ and, furthermore, if $\kappa_{n-1}$ is finite and non-zero then $Y$ is explicitly given by
	\begin{equation}\label{eq: Y eqn}
	Y(z) = \begin{pmatrix}
	p_n^\lambda(z) & \left(\mathcal{C}p_n^\lambda e^{-nV}\right)(z) \\
	-2\pi i \kappa_{n-1}^2 p_{n-1}^\lambda(z) & -2\pi i \kappa_{n-1}^2 \left(\mathcal{C}p_{n-1}^\lambda e^{-nV}\right)(z)
	\end{pmatrix},
	\end{equation}
	where $\kappa_{n}$ is the normalizing constant for $p_n^\lambda$, obtained via
	$$
	\int_{-1}^{1}(p_n^\lambda(z))^2e^{-nV(z)}dz=\frac{1}{\kappa_n^2}.
	$$	
	In \eqref{eq: Y eqn}, $\mathcal{C}f$ denotes the Cauchy transform of the function $f$ along $\Gamma_Y$, i.e.
	\begin{equation*}
	\left(\mathcal{C}f\right)(z) = \frac{1}{2\pi i}\int_{\Gamma_Y} \frac{f(s)}{s-z}\, ds,
	\end{equation*}
	which is analytic in $\mathbb{C}\setminus \Gamma_Y$. 
	
	The Deift-Zhou steepest descent method consists of finding a sequence of explicit and invertible transformations
	$$
	Y\mapsto \cdots \mapsto R
	$$
	so that, at the end of the day, $R$ satisfies a new Riemann-Hilbert problem. This new Riemann-Hilbert problem should have jumps which decay to the identity as $n\to \infty$, and solutions to this RHP should also decay to the identity as $z\to\infty$.  In this case, $R$ can be obtained asymptotically as $n\to \infty$, with explicit control over error terms. By tracing back all the transformations, we can extract asymptotics for $Y$.
	
	There is now a vast literature on this asymptotic method, many of which are very similar to our present situation \cite{deift1999orthogonal,deift1999uniform,deano2014large}. In our case, the only nonstandard construction is in the so-called global parametrix, so we will only very briefly discuss the other steps, and provide more details in the construction of this global parametrix.

	%-------------------------------------------------------------------------------------------------
	% First Transformation
	%-------------------------------------------------------------------------------------------------
	\subsection{First Transformations}	
	From now on, we fix the branch of $Q^{1/2}(z)$ to be the one with cuts on $\gamma_1$ and $\gamma_2$, as explained after Theorem~\ref{thm: support zero distribution}, and define $\phi$ as in \eqref{eq: phi 4 eqn}, where the path of integration connects $1$ to $z$ without crossing $\left(-\infty, -1\right)\cup \gamma\cup\hat{\gamma}$. This function $\phi$ satisfies
	\begin{equation}\label{eq: properties phi 1}
	\begin{aligned}
	& \phi_+(z)+\phi_-(z)=(-1)^{j+1}i\kappa,& z\in \gamma_j, \\
	& \phi_+(z)-\phi_-(z)=-\pi i, & z\in \widehat \gamma, \\
	& \re \phi_+(z)>0, & z\in \widehat \gamma, 
	\end{aligned}
	\end{equation}
	and also the inequality
	\begin{equation}\label{eq: properties phi 2}
	\re\phi(z)<0 
	\end{equation}
is valid in the immediate vicinity of any subarc of $\gamma_1\cup\gamma_2$ that does not contain their endpoints.	
	
	Recall also the complex constant $l$ determined via the expansion \eqref{eq: expansion phi}. The first transformation, which aims to normalize $Y$ at $\infty$, reads
	\begin{equation}\label{eq: T eqn}
	T(z) = e^{n\left(l-\frac{i\kappa}{2}\right)\sigma_3 } \, Y(z) \, e^{n\left(\phi(z)-\frac{V(z)}{2}\right)\sigma_3}.
	\end{equation}
	With $\Gamma_T:=\Gamma_Y$, $T$ is analytic on $\C\setminus \Gamma_T$, verifies the asymptotics $T\approx I$ as $z\to \infty$ and satisfies the jump $T_+=T_-J_T$ along $\Gamma_T$, with a jump matrix $J_T$ that can be computed explicitly from $J_T=Y_-^{-1}J_YY_+$.

	The second transformation is referred to in the literature as the opening of the lenses. Define $\Gamma_S := \Gamma_T \cup \gamma_1^\pm \cup \gamma_2^\pm$, with $\gamma_k^\pm$ as in Figure~\ref{fig: opening lens}, making sure that $\Gamma_S$ remains symmetric with respect to $i\R$. Then we open lenses $T \mapsto S$ in the standard way,
	\begin{align}\label{eq: S defn}
	S(z) = \begin{cases}
	T(z)(I\mp e^{2n\phi(z)}E_{21}), & z \text{ in the $\pm$-part of the lenses}, \\
	T(z), & \text{otherwise}.
	\end{cases}
	\end{align}
	Then, $S$ is analytic on $\C\setminus \Gamma_S$, verifies $S(z)=I+\Boh(z^{-1})$ as $z\to \infty$, and satisfies the jump $S_+=S_-J_S$ along $\Gamma_S$, with
	\begin{align}\label{eq: j S defn}
	J_S(z) = \begin{cases}
	e^{i\kappa n}E_{12}-e^{-i\kappa n}E_{21}		 & z \in \gamma_1, \\
	e^{-i\kappa n}E_{12}-e^{i\kappa n}E_{21}		& z \in \gamma_2, \\
	(-1)^{n}I+(-1)^ne^{-2n\phi_+(z)}E_{12}	 & z \in \hat{\gamma},\\
	I+e^{2n\phi(z)}E_{21}& z \in \gamma_1^\pm \cup \gamma_2^\pm.
	\end{cases}
	\end{align}
	
	\begin{figure}[t]
		\centering
		\begin{tikzpicture}[scale=1.5]
		\draw [ultra thick,postaction={mid arrow=black}] (-2,0) to [bend left=12] (-1.75, 1.2);
		\draw [ultra thick,postaction={mid arrow=black}] (-2,0) to [bend left=89] (-1.75, 1.2);
		\draw [ultra thick,postaction={mid arrow=black}] (-2,0) to [bend right=60] (-1.75, 1.2);
		\node [left] at (-2.4, 0.7) {$\gamma_1^+$};
		\node [right] at (-1.5, 0.7) {$\gamma_1^-$};
		
		\draw [ultra thick,postaction={mid arrow=black}] (1.75,1.2) to [bend left=12] (2, 0);
		\draw [ultra thick,postaction={mid arrow=black}] (1.75,1.2) to [bend left=89] (2, 0);
		\draw [ultra thick,postaction={mid arrow=black}] (1.75,1.2) to [bend right=60] (2, 0);
		\node [right] at (2.4, 0.7) {$\gamma_2^+$};
		\node [left] at (1.5, 0.7) {$\gamma_2^-$};
		
		\draw [ultra thick, dashed,postaction={mid arrow=black}] (-1.75,1.2) to [bend left =50] (1.75,1.2);
		
		\draw [fill] (1.75,1.2) circle [radius=0.08];
		\draw [fill] (-1.75,1.2) circle [radius=0.08];
		\draw [fill] (2,0) circle [radius=0.08];
		\draw [fill] (-2,0) circle [radius=0.08];
		\node [below] at (-2,-0.15) {$-1$};
		\node [below] at (2,-0.15) {$1$};
		\node [above] at (0,2.1) {$\hat{\gamma}$};
		\end{tikzpicture}
		\caption{The contour $\Gamma_S$.}
		\label{fig: opening lens}
	\end{figure}
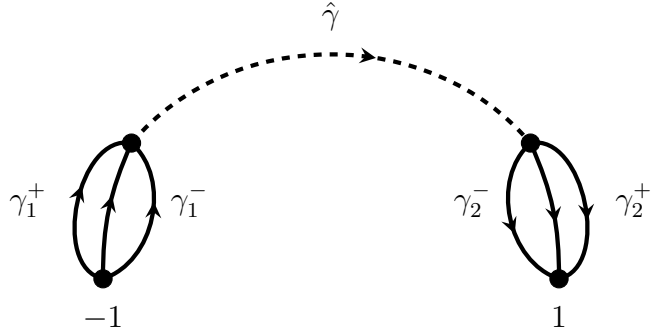
	
	From the inequalities in \eqref{eq: properties phi 1}--\eqref{eq: properties phi 2} we learn that many of these jumps are exponentially decaying. We expect that as $n\to \infty$, the matrix $S$ will converge to the solution of a model Riemann-Hilbert problem which is obtained by dropping these exponentially decaying terms of $J_S$. This idea is made rigorous in the next sections.

%--------------------------------------------------------------------------
% Global Parametrix
%--------------------------------------------------------------------------
\section{Construction of the Global Parametrix}\label{sec: global parametrix}
	When we drop the exponentially decaying terms in \eqref{eq: j S defn}, we arrive at the following model Riemann-Hilbert Problem:
	\begin{subequations}\label{eq:model RHP}
		\begin{alignat}{2}
		&M(z) \text{ is analytic in } \mathbb{C}\backslash\left(\gamma\cup\hat{\gamma}\right),\label{eq: M1a} \\
		&M_+(z) = M_-(z)J_M(z), \qquad\qquad  && z \in \gamma\cup\hat{\gamma}, \label{eq: M jump}\\
		&M(z)= I+\mathcal{O}\left(\frac{1}{z}\right), \qquad && z \to \infty. 
		\end{alignat}
	\end{subequations}
	
	In the Riemann-Hilbert problem above, we also require that $M$ has at worst fourth root singularities at the endpoints of $\gamma_1\cup\gamma_2$, and the jump matrix is explicitly given by
	\begin{equation}\label{def: jumps M}
	J_M(z)=
	\begin{cases}
	e^{i\kappa n}E_{12}-e^{-i\kappa n}E_{21}		 & z \in \gamma_1, \\
	e^{-i\kappa n}E_{12}-e^{i\kappa n}E_{21}		& z \in \gamma_2, \\
	(-1)^{n}I					 & z \in \hat{\gamma}.
	\end{cases}
	\end{equation}
	
	The model RHP above differs from the standard RHP that one usually encounters when solving problems related to the asymptotics of orthogonal polynomials in that the jumps across the different cuts are different and have entries with nonzero imaginary component. 
	
	We will address these issues in the remainder of this section, and for sake of clarity we split this construction into four steps. 
	
	The first step is standard, and consists of building a solution to the particular RHP obtained from $M$ by setting $n=0$. The resulting RHP is of the form one typically encounters when attempting to construct the global parametrix for orthogonal polynomials with positive weights on the real line, and yields a matrix function $N$. 
	
	In the second step, we start from an ansatz on how to modify the entries of $N$ so as to obtain the required $M$. This ansatz turns out to produce a system of scalar Riemann-Hilbert problems.
	
	In the third step, we construct the solutions to these scalar Riemann-Hilbert problems in a somewhat explicit way, with the help of meromorphic differentials on the associated Riemann surface. This can be accomplished for generic values of $n$ and $\lambda$ and is split into two sections. Our ideas are much inspired from \cite{kuijlaars2011global}.
	
	In the fourth and final step we analyze this non-degeneracy condition on $n$ and $\lambda$, showing that the construction is valid as long as either $n$ is even or $n$ is odd and $(n,\lambda)$ is not on the exceptional set $\Theta^*_\varepsilon$ mentioned in the Introduction.
	
	\subsection{Step One: Construction of Simplified Parametrix} 
	
	We first set $n=0$ in \eqref{def: jumps M} and so consider the following RHP:
	\begin{subequations}\label{eq:model RHP N1}
		\begin{alignat}{2}
		&N(z) \text{ is analytic in } \mathbb{C}\backslash\left(\gamma_1\cup \gamma_2\right),\label{eq:model RHP N1a} \\
		&N_+(z) = N_-(z)\begin{pmatrix}
		0 & 1\\
		-1 & 0
		\end{pmatrix}, \qquad && z \in \gamma_1\cup\gamma_2, \label{eq:model RHP N1b} \\
		&N(z)= I+\mathcal{O}\left(\frac{1}{z}\right), \qquad && z \to \infty. \label{eq: N infty}
		\end{alignat}
	\end{subequations}
	The solution to \eqref{eq:model RHP N1} is well known (see for instance \cite{bleher2011lectures}), and is given by
	\begin{equation}\label{eq: N eqn}
	N(z) = \begin{pmatrix}
	\frac{\eta(z)+\eta^{-1}(z)}{2} & \frac{\eta(z)-\eta^{-1}(z)}{-2i}\\
	\frac{\eta(z)-\eta^{-1}(z)}{2i} & \frac{\eta(z)+\eta^{-1}(z)}{2}
	\end{pmatrix},
	\end{equation}
	where 
	\begin{equation}\label{eq: eta eqn}
	\eta(z) = \left(\frac{\left(z+1\right)\left(z-z_*\right)}{\left(z+\overline{z_*}\right)\left(z-1\right)}\right)^{1/4}
	\end{equation}
	with branch cuts on $\gamma_1$ and $\gamma_2$ and the branch of the root chosen so that
	\begin{equation}\label{eq:normalization_eta}
	\lim\limits_{z\to \infty} \eta(z) = 1. 
	\end{equation}
	
	It is important to understand the location of the zeros of $N(z)$, as they will give us an extra degree of freedom when we later modify $N$. 
	
	\begin{prop}\label{prop: N zeros}
		Let
		\begin{equation}\label{def: zero N}
		y_*=\frac{2i}{\lambda(1-x)}
		\end{equation}
		Then
		\begin{equation*}
		\eta\left(y_*\right) - \eta^{-1}\left(y_*\right) = 0,
		\end{equation*}
		and this is the only finite zero of $\eta(z) \pm \eta^{-1}(z)$. In particular, the equation
		\begin{equation*}
		\eta(z) + \eta^{-1}(z) = 0,
		\end{equation*}
		has no finite solutions. 
	\end{prop}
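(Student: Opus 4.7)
My plan is to reduce the vanishing of $\eta \pm \eta^{-1}$ to the scalar equation $\eta^4(z)=1$, and then identify the correct fourth root of unity at its solution. Writing $f(z) := \eta^4(z) = \frac{(z+1)(z-z_*)}{(z+\overline{z_*})(z-1)}$, the condition $\eta(z) - \eta^{-1}(z)=0$ is equivalent to $\eta^2(z) = 1$ and $\eta(z) + \eta^{-1}(z) = 0$ is equivalent to $\eta^2(z) = -1$, and both force $f(z) = 1$. Cross-multiplication simplifies $f(z)=1$ to a linear equation in $z$; using $z_* + \overline{z_*} = 2x_*$ and $z_* - \overline{z_*} = 4i/\lambda$, its unique solution is precisely $z = y_*$. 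Moreover $\eta$ has only quarter-root type branch singularities at $\pm 1, z_*, -\overline{z_*}$, which produce poles rather than zeros of $\eta \pm \eta^{-1}$, so the only possible finite zero of either factor is $y_*$.

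The substantive task is then to decide whether $\eta^2(y_*) = +1$ or $-1$. The key geometric input is that $\im y_* = 2/(\lambda(1-x_*))$ strictly exceeds $\im z_* = 2/\lambda$ for $x_* \in (0,1)$, so $y_*$ lies on the positive imaginary axis strictly above both branch cuts $\gamma_1$ and $\gamma_2$. I therefore plan to compute $\eta^2(y_*)$ by analytically continuing $\sqrt{f}$ down the imaginary axis from $+i\infty$, starting from the normalization \eqref{eq:normalization_eta}.

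The concrete bookkeeping is elementary. Parameterizing $z=it$, a direct expansion yields
\[
(it+1)(it-z_*) = A(t) + iB(t), \qquad (it+\overline{z_*})(it-1) = A(t) - iB(t),
\]
with real $A(t) = -t^2 + 2t/\lambda - x_*$ and $B(t) = t(1-x_*) - 2/\lambda$, so that $f(it) = (A+iB)/(A-iB) \in S^1$ and $\arg f(it) = 2\arg(A(t)+iB(t))$. On the segment $t \in [t_*, \infty)$ with $t_* := 2/(\lambda(1-x_*))$, one has $B(t) \geq 0$, with equality only at the endpoint $t_*$, and a short comparison of $t_*$ against the roots $t_\pm = 1/\lambda \pm \sqrt{1/\lambda^2 - x_*}$ of $A$ (noting $t_\pm \leq 2/\lambda < t_*$ when real) shows that $A(t) < 0$ throughout the segment. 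Hence $A(t)+iB(t)$ traces a curve in the closed second quadrant without looping the origin, so its continuous argument returns to $\pi$ at $t=t_*$. This forces $\arg f(it_*) = 0 \pmod{2\pi}$ continuously from $\arg f(+i\infty) = 0$, and therefore $\sqrt{f}(y_*) = 1$.

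The conclusion is immediate: $\eta^2(y_*) = 1$, so $\eta(y_*) = \pm 1$, giving $\eta(y_*) - \eta^{-1}(y_*) = 0$ and $\eta(y_*) + \eta^{-1}(y_*) = \pm 2 \neq 0$. I anticipate that the sole technical nuisance is the argument-tracking in the third step — in particular verifying the sign of $A$ on $[t_*, \infty)$ in order to rule out a non-trivial winding of $A+iB$ about the origin — since an erroneous branch choice there would swap the roles of the two factors and contradict the stated proposition.
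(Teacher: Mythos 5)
Your proof is correct. The reduction to $\eta^4(z)=1$, the cross-multiplication showing $y_*$ is the unique finite solution, and the observation that branch points of $\eta$ give poles rather than zeros of $\eta\pm\eta^{-1}$ all match the paper's proof. Where you depart is the determination of the sign of $\eta^2(y_*)$: the paper argues by continuity in $\lambda$, noting via \eqref{eq: order xstar} that $y_*\to\infty$ as $\lambda\to\infty$, so $\eta^2(y_*)\to 1$ and hence $\eta^2(y_*)=1$ for all $\lambda>\lambda_c$. You instead track the branch of $\sqrt{f}$ explicitly along the imaginary axis from $+i\infty$ down to $y_*$, verifying $A(t)<0$ and $B(t)\geq 0$ on $[t_*,\infty)$ so that $A+iB$ stays in the closed second quadrant and does not wind. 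Your route is more elementary and self-contained in one respect: it does not invoke the large-$\lambda$ asymptotics of $x_*$, only $x_*\in(0,1)$ and the fact that $\gamma_1,\gamma_2$ stay off the imaginary axis, so the segment $[y_*,+i\infty)$ avoids the cuts. Both approaches are legitimate; the paper's is shorter given that \eqref{eq: order xstar} is already in hand, while yours makes the branch determination a direct computation independent of the Boutroux asymptotics.
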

	\begin{proof}
		First note that the solutions to $\eta(z) \pm \eta^{-1}(z)$ are the solutions to 
		\begin{equation*}
		\eta^4(z) = \frac{\left(z+1\right)\left(z-z_*\right)}{\left(z-1\right)\left(z+\overline{z_*}\right)} =1.
		\end{equation*}
		This equation has exactly one finite solution given by 
		\begin{equation*}
		z=y_*:=\frac{2i}{\lambda(1-x)}.
		\end{equation*}
		In other words, either $\eta^2(y_*) = 1$ or $\eta^2(y_*)=-1$, and we will now show that only the former takes place. From \eqref{eq: order xstar}, we have that
		\begin{equation*}
			y_* \to \infty, \qquad \lambda \to \infty. 
		\end{equation*}
		As we normalized the branch of $\eta$ at infinity as in \eqref{eq:normalization_eta}, we have that for $z$ large enough
		\begin{equation*}
			\eta^2(z) = 1 + \frac{1-x}{z} + \mathcal{O}\left(\frac{1}{z^2}\right). 
		\end{equation*}
		In words, the function $\eta^2$ is a continuous function of both $z$ and $\lambda$, and we know that if $\lambda$ is large enough, $y_*$ will be large enough, and in this situation we have that $\eta^2(y_*)$ is close to $1$. In particular, it cannot be close to $-1$, and as we know that $\eta^2(y_*)$ is either $\pm 1$, we can conclude that $\eta^2(y_*) = 1$ for $\lambda$ large enough. However, again using continuity in $\lambda$, we have that $\eta^2(y_*) = 1$ for all $\lambda > \lambda_c$, and not just for $\lambda$ large enough. Then, 
		\begin{equation*}
			\eta^2(y_*) = 1 \iff \eta(y_*) - \eta^{-1}(y_*) = 0,
		\end{equation*}
		concluding the proof. 
	\end{proof}
	
	\subsection{Step Two: Ansatz for the Global Parametrix and Related Scalar RHP's}
	
	We search $M$ in the form
	\begin{equation}\label{eq: M defn}
	M(z) = \begin{pmatrix}
	c_1^{-1} & 0 \\
	0 & c_2^{-1}
	\end{pmatrix}\begin{pmatrix}
	N_{11}(z)v_1^{(1)}(z) & N_{12}(z)v_2^{(1)}(z)\\
	N_{21}(z)v_1^{(2)}(z) & N_{22}(z) v_2^{(2)}(z)
	\end{pmatrix},
	\end{equation} 
	where $c_1$ and $c_2$ are nonzero constants, the functions $v_j^{(k)}$ are yet to be determined, and the notation $N_{ij}$ indicates the $(i,j)$-entry of the matrix $N$. 
	
	Comparing the RHPs for $M$ and $N$, we arrive at the following desired properties for $v_j^{(k)}$:
	
	\begin{enumerate}\label{v prop}
		\item If $j=k$, $v_j^{(k)}$ is analytic on $\mathbb{C}\backslash\left(\gamma\cup\widehat{\gamma}\right)$. 
		\item If $j\not=k$, $v_j^{(k)}$ is analytic on $\mathbb{C}\backslash\left(\gamma\cup\widehat{\gamma}\cup\{y_*\}\right)$, where the singularity at $y_*$ is a simple pole. 
		\item The $v_j^{(k)}$ have the following jumps over $\gamma$, 
		\begin{subequations}\label{eq: v jumps}
			\begin{alignat}{3} 
			&v_{1,\pm}^{(k)}(z) = v_{2,\mp}^{(k)}(z) e^{-in \kappa}, \qquad && z\in \gamma_1,\qquad && k=1,2, \\
			&v_{1,\pm}^{(k)}(z) = v_{2,\mp}^{(k)}(z) e^{in\kappa}, \qquad && z\in \gamma_2,\qquad && k = 1,2, \\
			&v^{(k)}_{j,+}(z) = (-1)^nv^{(k)}_{j,-}(z), \qquad &&z\in \hat{\gamma}, \qquad && k,j = 1,2.
			\end{alignat}
		\end{subequations}
		\item When $j=k$,
		\begin{equation}\label{eq: v infty}
		v_j^{(j)}(z) = c_j + \mathcal{O}\left(\frac{1}{z}\right), \quad z\to \infty, 
		\end{equation}
		for some nonzero constant $c_j$.
		\item If $j\not=k$, then 
		\begin{equation}	\label{eq: v infty nondiag}
		v^{(k)}_j(z)=\Boh(1),\quad z\to \infty. 
		\end{equation}
		\item The functions $v_j^{(k)}$ remain bounded at the endpoints of $\gamma_1\cup\gamma_2$.
	\end{enumerate}
	
	A more rigorous statement of this approach is established in the next result.
	
	\begin{prop}\label{prop: model rhp to scalar}
		If the functions $v_j^{(k)}$ satisfy the conditions (1)--(6) above, then the solution to the RHP \eqref{eq:model RHP} is given by the formula \eqref{eq: M defn}
	\end{prop}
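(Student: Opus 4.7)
The plan is to verify directly, entry by entry, that the matrix $M$ defined by \eqref{eq: M defn} satisfies each of the defining conditions of the model RHP \eqref{eq:model RHP}. The computation breaks into analyticity, jumps, behavior at infinity, and endpoint behavior, and in each case everything reduces to transparently combining the known properties of $N$ from Section 5.1 with the hypotheses (1)--(6) on the $v_j^{(k)}$.

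First, I would establish analyticity on $\C \setminus (\gamma \cup \hat\gamma)$. The only potentially problematic point away from the cuts is $z = y_*$: by hypothesis (2), the off-diagonal $v_j^{(i)}$ (with $j \neq i$) has a simple pole at $y_*$. Here is where Proposition~\ref{prop: N zeros} is essential: the off-diagonal entries of $N$, given by \eqref{eq: N eqn}, are proportional to $\eta(z) - \eta^{-1}(z)$, which has a \emph{simple} zero at exactly $z = y_*$ and nowhere else. Hence the product $N_{ij} v_j^{(i)}$ that appears in the off-diagonal entries of $M$ is analytic at $y_*$. The diagonal entries of $M$ are products of $N_{ii}$ with $v_i^{(i)}$, both analytic at $y_*$ by hypotheses (1), so analyticity of $M$ on $\C \setminus (\gamma \cup \hat\gamma)$ follows.

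Second, I would verify the jump relations \eqref{eq: M jump} with jump matrix $J_M$ as in \eqref{def: jumps M}. From \eqref{eq:model RHP N1b} one reads off $N_{i1,+} = -N_{i2,-}$ and $N_{i2,+} = N_{i1,-}$ on $\gamma_1 \cup \gamma_2$. Multiplying each entry of $M$ by the corresponding $v_j^{(i)}$ and using hypothesis (3), the identity $v_{1,+}^{(i)} = e^{\pm i\kappa n} v_{2,-}^{(i)}$ on $\gamma_{1,2}$ precisely converts the $N$-jump into the desired factor $e^{\pm i\kappa n} E_{12} - e^{\mp i\kappa n} E_{21}$; and on $\hat\gamma$, $N$ is continuous while each $v_j^{(i)}$ picks up the factor $(-1)^n$, producing the jump $(-1)^n I$. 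This matching is a direct, essentially algebraic, computation.

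Third, for the asymptotic condition at infinity, hypotheses (4)--(5) combined with $N(z) = I + \mathcal{O}(1/z)$ from \eqref{eq: N infty} give $(M)_{ii}(z) = c_i^{-1}(1 + \mathcal{O}(1/z))(c_i + \mathcal{O}(1/z)) = 1 + \mathcal{O}(1/z)$ and $(M)_{ij}(z) = c_i^{-1} \mathcal{O}(1/z) \cdot \mathcal{O}(1) = \mathcal{O}(1/z)$ for $i \neq j$, giving $M = I + \mathcal{O}(1/z)$. Fourth, near each endpoint of $\gamma_1 \cup \gamma_2$, the entries of $N$ have at most fourth-root singularities and the $v_j^{(k)}$ stay bounded by hypothesis (6), so $M$ inherits the required fourth-root endpoint behavior. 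The entire argument is routine modulo the single delicate point: without the zero/pole matching at $y_*$ provided by Proposition~\ref{prop: N zeros}, the ansatz \eqref{eq: M defn} would not produce an analytic matrix, and so the main conceptual content of the proof is packaged into that proposition rather than in the verification itself.
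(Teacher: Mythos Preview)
Your proposal is correct and follows essentially the same route as the paper's own proof: verify analyticity (with the key cancellation at $y_*$ coming from Proposition~\ref{prop: N zeros}), check the jumps entrywise using \eqref{eq:model RHP N1b} together with hypothesis~(3), and read off the normalization at infinity and the endpoint behavior from hypotheses~(4)--(6) combined with the corresponding properties of $N$. The emphasis you place on the zero/pole matching at $y_*$ as the only nontrivial point is exactly right.
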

	\begin{proof}
		The condition (1) for $v_j^{(j)}$, together with \eqref{eq:model RHP N1a} imply immediately that $M_{11}$ and $M_{22}$ are analytic on $\C\setminus \left(\gamma\cup\widehat{ \gamma}\right)$. Having in mind condition (2) and Proposition~\ref{prop: N zeros}, we see that for $j\neq k$ the simple pole of $v_{j}^{(k)}$ at $y_*$ cancels with the zero $y_*$ of $N_{kj}$, so the off-diagonal entries of $M$ are also analytic on $\C\setminus \left(\gamma\cup\widehat{ \gamma}\right)$. Condition \ref{eq: M1a} is thus verified.
		
		To verify the appropriate jump conditions, note that for $z \in \gamma_1$, we use the jumps for $v_j^{(k)}$ to get
		\begin{align*}
		M_+(z) &= \begin{pmatrix}
		c_1^{-1} & 0 \\
		0 & c_2^{-1}
		\end{pmatrix}\begin{pmatrix}
		-N_{12,-}(z)v_{2,-}^{(1)}(z) e^{-in\kappa}&  N_{11,-}(z)v_{1,-}^{(1)}(z)e^{in\kappa}\\
		-N_{22,-}(z) v_{2,-}^{(2)}(z) e^{-in\kappa}&	N_{21,-}(z)v_{1,-}^{(2)}(z)e^{in\kappa}
		\end{pmatrix}\\
		&= M_-(z)\begin{pmatrix}
		0 & e^{in\kappa} \\
		-e^{-in\kappa}& 0
		\end{pmatrix}.
		\end{align*}
		
		This is the same as \eqref{eq: M jump} along $\gamma_1$. The jumps over $\gamma_2$ and $\hat{\gamma}$ can be verified in the same manner. The normalization of $M$ at $\infty$ follows from \eqref{eq: v infty} and \eqref{eq: N infty}. Finally, because the $v_j^{(k)}$'s are bounded near the endpoints of $\gamma_1\cup\gamma_2$, the behavior of $M$ near the endpoints of $\gamma_1\cup\gamma_2$ is governed by that of $N$, and as such, $M$ has at worst fourth-root singularities.
	\end{proof}
	
	Therefore, we are left to construct the functions $v_j^{(k)}$, which is accomplished in the remainder of this section.

	\subsection{Construction of the Meromorphic Differentials} 
	
	Let $\mathcal{Q}$ be the genus $1$ Riemann surface associated to the algebraic equation \eqref{eq: spectral curve}. More concretely, this surface $\mathcal Q$ is obtained as the closure of the surface that we get when gluing the two copies
	$$
	\mathcal Q_j=\overline\C\setminus (\gamma_1\cup\gamma_2),\quad j=1,2,
	$$
	along $\gamma_1\cup\gamma_2$ in the usual crosswise manner. On $\mathcal Q$, the global meromorphic solution $\xi$ to the equation \eqref{eq: spectral curve} is well defined by $\restr{\xi}{\mathcal Q_j}=\xi_j$, where $\xi_j$ is the analytic branch of $Q^{1/2}$ on $\mathcal Q_j$ uniquely determined by the asymptotics
	\begin{equation*}
	\xi_j(z)= (-1)^{j}\frac{\lambda i}{2}+\Boh(z^{-1}), \quad z \to \infty, \quad j=1,2.
	\end{equation*}
	In other words, $\xi_1=Q^{1/2}$ is the branch as in \eqref{eq: branch square root} and $\xi_2=-\xi_1$. For notational convenience, we denote by $a^{(j)}$ the copy of the point $a\in \C$ which is on the sheet $\mathcal Q_j$. This is well defined as long as $a$ does not belong to any of the branch cuts $\gamma_1$ and $\gamma_2$, which is enough for our purposes.  
	
	In addition, the canonical homology basis $\{A,B\}$ on $\mathcal{Q}$ is chosen as depicted in Figure~\ref{fig: homology basis}. 
	
	\begin{figure}[t]
		\centering
		\begin{tikzpicture}[scale=1.5]
		\draw [thick] (-2,0) to [bend left=12] (-1.75, 1.2);
		\draw [thick] (1.75,1.2) to [bend left=12] (2, 0);
		\node [right] at (1.9, 0.7) {$\gamma_2$};
		\node [left] at (-1.9, 0.7) {$\gamma_1$};
		
		\draw [ultra thick,postaction={mid arrow=black}] (-2.1,-0.5) to [bend right=90] (-1.75, 1.6);
		\draw [ultra thick,postaction={mid arrow=black}] (-1.75, 1.6) to [bend right=90] (-2.1,-0.5);
		\node [left] at (-2.6, 0.1) {$B$};
		
		\draw [ultra thick,postaction={mid arrow=black}] (-1.9,0.85) to [bend left=120] (1.9, 0.85);
		\draw [dashed, ultra thick,postaction={mid arrow=black}]  (1.9, 0.85) to [bend left=20] (-1.9,0.85);
		\node [right] at (0, 2.05) {$A$};
		
		\draw [fill] (1.75,1.2) circle [radius=0.06];
		\draw [fill] (-1.75,1.2) circle [radius=0.06];
		\draw [fill] (2,0) circle [radius=0.06];
		\draw [fill] (-2,0) circle [radius=0.06];
		
		\end{tikzpicture}
		\caption{The homology basis on $\mathcal{Q }$. The bold contours are on the top sheet of $\mathcal{Q}$, and the dashed contours are on the second sheet of $\mathcal{Q}$. In particular, $B$ is a cycle on the first sheet of $\mathcal{Q}$ that encircles $\gamma_1$ once in the counter-clockwise direction without crossing the imaginary axis. The cycle $A$ starts on the first sheet of $\mathcal{Q}$ and passes through $\gamma_1$ and $\gamma_2$. We also impose that the cycle $A$ is symmetric with respect to the imaginary axis. }
		\label{fig: homology basis}
	\end{figure}
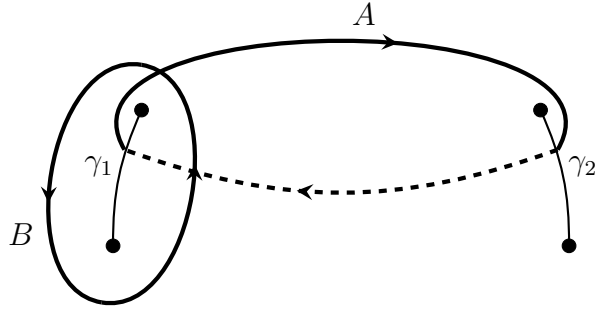

	To construct the functions $v^{(j)}_k$ from the previous section, we will use meromorphic differentials on the Riemann surface $\mathcal Q$. As mentioned in the Introduction, this construction is inspired by a similar construction by Kuijlaars and Mo \cite{kuijlaars2011global}, but here we exploit the symmetry with respect to the imaginary axis in a very explicit way, which also leads to more explicit formulas when compared to \cite{kuijlaars2011global}.
	
	The differential
	$$
	\Lambda_0 = \frac{1}{\xi(z)(z^2-1)}dz
	$$
	linearly generates the space of holomorphic differentials on $\mathcal Q$. For later convenience, for a positive integer $j$ we denote
	\begin{equation}\label{eq: moments differential}
	m^{(j)}_{A}=\frac{1}{2}\oint_A z^j \Lambda_0 = \int_{-\overline{z}_*}^{z_*}\frac{s^j}{\xi(s)(s^2-1)}ds,\quad
	m^{(j)}_{B}=-\frac{1}{2}\oint_B z^j \Lambda_0 = \int_{\gamma_1}\frac{s^j}{\xi_+(s)(s^2-1)}ds.
	\end{equation}
	Because $\Lambda_0$ is, up to a multiplicative constant, the only holomorphic differential on $\mathcal Q$, the Riemann bilinear relations imply that $m_A^{(0)}$ and $m_B^{(0)}$ are nonzero. Also, symmetry (see for instance the Appendix~\ref{appendix: symmetries}) gives 
	\begin{equation}\label{eq: periods holomorphic}
	\re m_A^{(2j)}=\im m_B^{(2j)}=0, \quad \im m_A^{(2j+1)}=\re m_B^{(2j+1)}=0, \qquad j\geq 0.
	\end{equation}
	
	For $a\in i\R$ and $\nu=1,2$, define a meromorphic differential $\Lambda_a^{(\nu)}$ by the formula
	$$
	\Lambda_a^{(\nu)}=\frac{1}{2}\frac{1}{z-a}\left(1+\frac{\xi(a^{(\nu)})(a^2-1)}{\xi(z) (z^2-1)}\right)dz
	$$
	Then the only poles of $\Lambda_a^{(\nu)}$ are $a^{(\nu)}$, $\infty^{(1)}$ and $\infty^{(2)}$, which are all simple, and
	$$
	\res_{\infty^{(1,2)}}\Lambda_a^{(\nu)}=-\frac{1}{2},\quad \res_{a^{(\nu)}}\Lambda_a^{(\nu)}=1.
	$$
	Also, a calculation using symmetry and residues shows that
	\begin{equation}\label{eq: symmetries lambda a}
	\re \oint_{A}\Lambda_a^{(\nu)}=0,\quad \im \oint_B \Lambda_a^{(\nu)}=((-1)^\nu-1)\frac{\pi}{2} .
	\end{equation}
	
	Finally, for $a\in i\R$ and $b\in \R$, $\nu,\varsigma\in \{1,2\}$, set
	\begin{equation}\label{def: meromorphic differential Omega}
	\Omega(a,b)=\Omega(a,b;\nu,\varsigma)=\Lambda_{a}^{(\nu)}-\Lambda^{(\varsigma)}_{y_*}+b\Lambda_0,
	\end{equation}
	where $y_*$ is as in Proposition \ref{prop: N zeros}. 
	
	\begin{prop}\label{prop: symmetry residues periods}
		If $a=y_*$ and $\nu=\varsigma$, then $\Omega(a,b)$ is holomorphic.
		Otherwise, the differential $\Omega(a,b)$ has simple poles at $a^{(\nu)}$ and $y_*^{(\varsigma)}$ with residues $+1$ and $-1$, respectively, and no other poles. Furthermore, in either case
		$$
		\re \oint_A \Omega(a,b)=0,\quad \im \oint_B \Omega(a,b)=((-1)^\nu-(-1)^\varsigma)\frac{\pi }{2}.
		$$
	\end{prop}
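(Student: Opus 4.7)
The plan is to prove both parts by exploiting linearity together with the properties of the building blocks $\Lambda_a^{(\nu)}$, $\Lambda_{y_*}^{(\varsigma)}$, and $\Lambda_0$ that have already been established in the excerpt.

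For the pole structure, I would first observe that $\Lambda_0$ is holomorphic on $\mathcal{Q}$, so it contributes no poles to $\Omega(a,b)$. The differentials $\Lambda_a^{(\nu)}$ and $\Lambda_{y_*}^{(\varsigma)}$ each have simple poles at $\infty^{(1)}$ and $\infty^{(2)}$ with the same residue $-1/2$, so these cancel exactly in the difference $\Lambda_a^{(\nu)} - \Lambda_{y_*}^{(\varsigma)}$. The only surviving poles are therefore the simple pole of $\Lambda_a^{(\nu)}$ at $a^{(\nu)}$ with residue $+1$, and that of $-\Lambda_{y_*}^{(\varsigma)}$ at $y_*^{(\varsigma)}$ with residue $-1$. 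In the special case $a=y_*$ and $\nu=\varsigma$, the two differentials $\Lambda_a^{(\nu)}$ and $\Lambda_{y_*}^{(\varsigma)}$ coincide by their defining formula, so $\Omega(a,b) = b\Lambda_0$ is holomorphic.

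For the periods, I would invoke linearity of the integrals over $A$ and $B$ together with \eqref{eq: symmetries lambda a}, which gives
$$
\re\oint_A \Lambda_a^{(\nu)} = \re\oint_A \Lambda_{y_*}^{(\varsigma)} = 0, \qquad
\im\oint_B \Lambda_a^{(\nu)} = ((-1)^\nu - 1)\tfrac{\pi}{2}, \qquad \im\oint_B \Lambda_{y_*}^{(\varsigma)} = ((-1)^\varsigma - 1)\tfrac{\pi}{2}.
$$
The remaining piece $b\Lambda_0$ is handled using \eqref{eq: moments differential} and the symmetry relations \eqref{eq: periods holomorphic}: since $\oint_A \Lambda_0 = 2 m_A^{(0)}$ and $\oint_B \Lambda_0 = -2 m_B^{(0)}$, the identities $\re m_A^{(0)} = 0$ and $\im m_B^{(0)} = 0$ imply that $\re\oint_A \Lambda_0 = 0$ and $\im\oint_B \Lambda_0 = 0$. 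Since $b\in\mathbb{R}$, multiplying by $b$ preserves these vanishings. Combining linearly yields
$$
\re\oint_A \Omega(a,b) = 0, \qquad \im\oint_B \Omega(a,b) = ((-1)^\nu - (-1)^\varsigma)\tfrac{\pi}{2},
$$
as claimed.

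There is no genuine obstacle here: the proposition is essentially a bookkeeping consequence of the construction of $\Lambda_a^{(\nu)}$ and of the symmetries already recorded. The one place that deserves a line of care is ensuring that $b$ is real, which is built into the definition \eqref{def: meromorphic differential Omega}, so that the real/imaginary parts of the $A$- and $B$-periods of $b\Lambda_0$ vanish as noted.
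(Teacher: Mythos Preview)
Your proposal is correct and follows exactly the route the paper has in mind: the paper's proof is the single sentence ``Follow immediately from the properties of $\Lambda_0$ and $\Lambda_a^{(\nu)}$,'' and you have simply unpacked those properties (cancellation of the poles at $\infty^{(1,2)}$, the residue data, and the period identities \eqref{eq: periods holomorphic}--\eqref{eq: symmetries lambda a} together with $b\in\R$) in the expected way.
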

	\begin{proof}
		Follow immediately from the properties of $\Lambda_0$ and $\Lambda_a^{(\nu)}$.
	\end{proof}
	
	Moving forward, define $\Psi_B:\R^2\to \R$ by
	$$
	\Psi_B(\tau,b)=\re \oint_B \Omega(i\tau,b)=\re \oint_{B}\left(\Lambda^{(\nu)}_{i\tau}-\Lambda^{(\varsigma)}_{y_*}\right) +b \oint_B \Lambda_0=\re \oint_{B}\left(\Lambda^{(\nu)}_{i\tau}-\Lambda^{(\varsigma)}_{y_*}\right) -2bm_B^{(0)}.
	$$
	Clearly $\Psi_B$ is a linear function of $b$. Furthermore, as the $B$-cycle can be chosen to not intersect $i\R$, $\Psi_B$ is actually a real analytic function of $\tau\in \R$ as well. Thus, the level set determined by $\Psi_B(\tau,b)=0$ is actually the graph of a real analytic function $\tau \mapsto \Psi_B(\tau,b(\tau))$, with
	$$
	b(\tau)=\frac{1}{2m_B^{(0)}}\re \oint_{B}\left(\Lambda^{(\nu)}_{i\tau}-\Lambda^{(\varsigma)}_{y_*}\right).
	$$
	
	Having in mind Proposition~\ref{prop: symmetry residues periods}, we next consider the function $\Psi_A:\R\to \R\backslash \Z$ given by
	\begin{equation}\label{def: psi A}
	\Psi_A(\tau)=\frac{1}{2\pi i}\oint_A \Omega(i\tau,b(\tau)).
	\end{equation}
	For the integration above, we consider the cycle $A$ to be given by straight line segments on $\mathcal Q_1$ and $\mathcal Q_2$ with endpoints $z_*$ and $-\overline z_*$. This way, in principle, $\Psi_A$ is well defined for $\tau \neq \im z_*$ and also for $\tau \neq \im y_*$ in case $\nu=\varsigma$. However, because the residue of $\Omega(i\tau,b)$ at $a^{(\nu)}$ is $1$ we actually get that $\Psi_A$ remains well defined and continuous, as a function with values on $\R\setminus \Z$, when $\tau\to \im z_*$. Also, when $\nu=\varsigma$, we immediately see that $b(\tau)\to 0$ as $\tau\to\im y_*$, and a simple calculation shows that $\Psi_A$ remains continuous with $\Psi_A(\im y_*)=0$, and $\Omega(i\tau,b(\tau))$ reduces to the null meromorphic differential.
	
	In conclusion, $\Psi_A$ is a continuous function from $\R$ to $\R\setminus \Z$.
	
	\begin{prop}\label{prop: injective}
		The function $\Psi_A$ is injective.
	\end{prop}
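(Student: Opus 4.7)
The plan is to rewrite $\Psi_A$ as a real multiple of the imaginary part of an Abel--Jacobi-type integral on $\mathcal Q$, after which the claim becomes transparent. Let $\omega := \Lambda_0/(2m_A^{(0)})$ be the holomorphic differential normalized by $\oint_A\omega = 1$, and write $m_A^{(0)} = i\alpha$ with $\alpha\in\R$, using \eqref{eq: periods holomorphic}. Decompose $\Omega(i\tau,b(\tau)) = \widetilde{\Omega}(\tau) + c(\tau)\Lambda_0$, where $\widetilde{\Omega}(\tau)$ is the $A$-normalized differential of the third kind with simple poles of residues $+1$ and $-1$ at $(i\tau)^{(\nu)}$ and $y_*^{(\varsigma)}$ respectively (the difference $\Lambda^{(\nu)}_{i\tau}-\Lambda^{(\varsigma)}_{y_*}$ already has zero residue at $\infty^{(1,2)}$). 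The Riemann bilinear relation yields $\oint_B \widetilde{\Omega}(\tau) = 2\pi i\,u(\tau)$ with $u(\tau) := \int_{y_*^{(\varsigma)}}^{(i\tau)^{(\nu)}}\omega$; combined with $m_B^{(0)}\in\R$, the defining condition $\re\oint_B\Omega = 0$ forces $c(\tau)$ to be real and produces
$$
\Psi_A(\tau) \;=\; \frac{c(\tau)\,m_A^{(0)}}{\pi i} \;=\; -\frac{\alpha}{m_B^{(0)}}\,\im u(\tau).
$$
Hence injectivity of $\Psi_A$ reduces to showing that $\im u(\tau)$ is strictly monotonic on $\R$ with total variation equal to $|m_B^{(0)}/\alpha|$.

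For the monotonicity, I would observe that $\omega$ is purely imaginary along the imaginary axis on either sheet. The explicit calculation
$$
Q(i\tau) \;=\; -\frac{\lambda^2}{4}\cdot\frac{(\tau - 2/\lambda)^2 + x_*^2}{\tau^2+1}\;<\;0
$$
forces $\xi(i\tau)\in i\R\setminus\{0\}$ with \emph{constant} sign on each sheet, since $\gamma_1\cup\gamma_2$ is disjoint from $i\R$ (the arcs lie strictly in the two open half planes). Combined with $m_A^{(0)}\in i\R$ and $dz/d\tau = i$, substitution into $\omega = \tfrac{1}{2m_A^{(0)}}\tfrac{dz}{\xi(z)(z^2-1)}$ gives $du/d\tau\in i\R\setminus\{0\}$ with constant sign, so $\im u(\tau)$ is strictly monotonic. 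Geometrically, the image of the imaginary axis on sheet $\nu$ under the Abel--Jacobi map is a \emph{vertical} line in the Jacobian $\C/(\Z + \tau'\Z)$, where $\tau' = im_B^{(0)}/\alpha\in i\R$.

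To finish, as $\tau$ runs from $-\infty$ to $+\infty$, the point $(i\tau)^{(\nu)}$ traces out a simple closed loop $L_\nu\subset\mathcal Q$ passing through $\infty^{(\nu)}$. Because the imaginary axis separates the two branch cuts, $L_\nu$ bounds a region on sheet $\nu$ containing exactly one of $\gamma_1,\gamma_2$, and so it is homologous to $\pm B$ in $H_1(\mathcal Q,\Z)$; consequently $\oint_{L_\nu}\omega = \pm\tau'$, whence $\im u$ has total variation exactly $|\im\tau'| = |m_B^{(0)}/\alpha|$ on $\R$. Rescaling by $-\alpha/m_B^{(0)}$, the total variation of $\Psi_A$ equals $1$, and combined with strict monotonicity this exhibits $\Psi_A$ as a continuous bijection from $\R$ onto an open interval of length $1$, hence injective. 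The main technical point will be the decomposition $\Omega = \widetilde{\Omega} + c\,\Lambda_0$ together with the verification, via Proposition~\ref{prop: symmetry residues periods} and \eqref{eq: periods holomorphic}, that $c(\tau)$ is in fact real; once that is in place, the remainder is a direct consequence of the symmetry $\overline{\xi(\bar z)} = -\xi(-z)$ and elementary homology on the genus-one surface $\mathcal Q$.
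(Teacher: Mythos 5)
Your proposal is correct in outline but takes a genuinely different route from the paper's. The paper disposes of the claim with a short indirect argument: if $\Psi_A(\tau_1)=\Psi_A(\tau_2)$ with $\tau_1\neq\tau_2$, the difference $\widehat\Omega=\Omega(i\tau_1,b(\tau_1))-\Omega(i\tau_2,b(\tau_2))$ is a third-kind differential with residues $\pm 1$ at $i\tau_1^{(\nu)}$ and $i\tau_2^{(\nu)}$, with vanishing $A$- and $B$-periods (modulo $2\pi i\Z$), so $P\mapsto\exp(\int_{P_0}^P\widehat\Omega)$ is a single-valued meromorphic function on the genus-one surface $\mathcal Q$ with exactly one simple pole, which is impossible. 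Your approach instead decomposes $\Omega(i\tau,b(\tau))=\widetilde\Omega(\tau)+c(\tau)\Lambda_0$, invokes the third-kind reciprocity law to identify $\oint_B\widetilde\Omega$ with $2\pi i\int_{y_*^{(\varsigma)}}^{(i\tau)^{(\nu)}}\omega$, deduces $\Psi_A(\tau)=-\tfrac{\alpha}{m_B^{(0)}}\im u(\tau)$, and proves strict monotonicity by the explicit sign computation $Q(i\tau)<0$. This is considerably more work than the paper's three-line contradiction, but it buys more: you obtain not only injectivity but also strict monotonicity and the exact total variation of $\Psi_A$, which the paper establishes separately (via the asymptotic expansion of $\oint_A\Omega(i\tau,b(\tau))$ as $\tau\to\infty$) in the discussion after the proposition. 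Two small points worth tightening: the reality of $c(\tau)$ follows most directly from the $A$-period symmetry $\re\oint_A\Omega=0$ in Proposition~\ref{prop: symmetry residues periods} together with $m_A^{(0)}\in i\R$ (the $B$-period condition alone only pins down $\re c$), and since $\Psi_A$ is valued modulo $\Z$, your monotonicity-plus-total-variation-one argument is exactly what is needed to rule out wrap-around and conclude injectivity into $\R/\Z$, so that step should be stated explicitly.
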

	\begin{proof}
		Take two values $\tau_1$ and $\tau_2$ for which $\Psi_A(\tau_1)=\Psi_A(\tau_2)$. This means that the $A$-period of the difference $\widehat \Omega = \Omega(i\tau_1,b(\tau_1))-\Omega(i\tau_2,b(\tau_2))$ vanishes. Also, having in mind the definition of $b(\tau)$ and Proposition~\ref{prop: symmetry residues periods}, we get that its $B$-period vanishes as well.
		
		To get to a contradiction, assume that $\tau_1\neq \tau_2$. Then $\widehat \Omega$ has residues $+1$ and $-1$ at $i\tau_1^{(\nu)}$ and $i\tau_2^{(\nu)}$. Hence, for a fixed base point $P_0\in \mathcal Q$, the function 
		$$
		P\mapsto \exp\left(\int_{P_0}^{P}\widehat\Omega \right)
		$$ 
		is a well-defined meromorphic function on $\mathcal Q$ with only one pole, namely at $i\tau_2^{(\nu)}$, and nonzero residue. But since the residues of any meromorphic function have to add to $0$, this is not possible.
	\end{proof}
	
	We also need to compute the limit of $\Psi_A$ as $\tau\to\infty$. To do so, first notice that $\Lambda_{y_*}^{(\varsigma)}$ and $\Lambda_0$ do not depend on $\tau$. To analyze $\Lambda_{i\tau}^{(\nu)}$, we use \eqref{eq: branch square root} to derive the asymptotics
	$$
	\frac{1}{2}\frac{1}{z-a}\frac{\xi(a^{(\nu)})}{\xi(z)}\frac{a^2-1}{z^2-1}=(-1)^{\nu-1}\left(\frac{i a \lambda}{4}+\frac{i\lambda z}{4}+\frac{1}{2}\right)\frac{1}{\xi(z)(z^2-1)}+\Boh(a^{-1}),\quad a\to \infty,
	$$
	which is valid with uniform error term for $z$ in compacts. With $a=i\tau$ this, in turn, implies that
	\begin{multline*}
	\oint_{A,B} \Lambda_{i\tau}^{(\nu)}= \frac{1}{2}\oint_{A,B}\frac{1}{z-a}\frac{\xi(a^{(\nu)})}{\xi(z)}\frac{a^2-1}{z^2-1} dz \\ =\pm (-1)^{\nu-1}\left( \frac{\tau \lambda}{2} m_{A,B}^{(0)}+m_{A,B}^{(0)}+\frac{i \lambda}{2} m_{A,B}^{(1)}\right)+\Boh(\tau^{-1}),\quad \tau\to\infty,
	\end{multline*}
	with the $+$-sign for $A$ and $-$-sign for $B$. Thus,
	$$
	b(\tau)=-\frac{1}{2m_B^{(0)}}\re\oint_B \Lambda_{y_*}^{(\varsigma)} +\frac{(-1)^{\nu}}{4}\left(\tau \lambda+2\right)+\frac{(-1)^{\nu}}{4} \frac{i\lambda m_B^{(1)}}{m_B^{(0)}}+\Boh(\tau^{-1}),\quad \tau \to \infty, 
	$$
	and then
	$$
	\oint_A \Omega(i\tau,b(\tau)) = -\oint_A \Lambda_{y_*}^{(\varsigma)}-\frac{m_A^{(0)}}{m_B^{(0)}}\re \oint_B \Lambda_{y_*}^{(\varsigma)} +(-1)^\nu \frac{i\lambda}{2}\left(\frac{m_A^{(0)}m_B^{(1)}}{m_B^{(0)}}-m_A^{(1)}\right)+\Boh(\tau^{-1}).
	$$
	From the symmetries \eqref{eq: periods holomorphic}--\eqref{eq: symmetries lambda a}, the right-hand side of the above formula is purely imaginary, so defining $c=c(\lambda;\nu,\varsigma)$ through
	\begin{equation*}
	\lim_{\tau\to \infty}\oint_A \Omega_{i\tau}^{(\nu)}=2\pi i\lim_{\tau\to \infty} \Psi_A(\tau)=i c(\lambda;\nu,\varsigma),
	\end{equation*}
	this constant $c$ is purely real and explicitly given by
	\begin{equation}\label{def: theta divisor}
	c=c(\lambda;\nu,\varsigma):=\im\left(-\oint_A \Lambda_{y_*}^{(\varsigma)}-\frac{m_A^{(0)}}{m_B^{(0)}}\re \oint_B \Lambda_{y_*}^{(\varsigma)} +(-1)^\nu \frac{i\lambda}{2}\left(\frac{m_A^{(0)}m_B^{(1)}}{m_B^{(0)}}-m_A^{(1)}\right)\right).
	\end{equation}
	We emphasize that $c$ depends on $y_*$, $\varsigma$, $\nu$ and obviously $\lambda$, but not on $a$.
	
	As a consequence of Proposition \ref{prop: injective}, we see that $\Psi_A$ is a bijection between $\R\cup \{\infty\}$ and an interval of the form $[q,q+1]$. As such, this means that for any $\alpha\in \R\setminus \{ c\}$ we can always choose $\tau$ so that the $A$-period of $\Omega(i\tau,b(\tau))$ equals $2\pi i\alpha$. We summarize the findings of this section as the next result.
	
	\begin{prop}\label{prop: periods injective}
		For $\mu,\varsigma\in \{1,2\}$, $\lambda>\lambda_c$, and $a\in i\R$, $b\in \R$, with $a\neq y_*$ in case $\nu=\varsigma$, the meromorphic differential $\Omega(a,b)$ in \eqref{def: meromorphic differential Omega} has simple poles at $a^{(\nu)}$ and $y_*^{(\varsigma)}$ with residues $+1$ and $-1$, respectively, and is elsewhere holomorphic.
		
		Furthermore, if $\alpha \in \R$ is such that $\alpha-c\in \R\setminus 2\pi\Z$, then there exist unique $a\in i\R$ and $b\in \R$ for which
		$$
		\oint_{A} \Omega(a,b)= \alpha i ,\quad \oint_B \Omega(a,b)=((-1)^\nu-(-1)^\varsigma)\frac{\pi i}{2},
		$$
		where these identities are understood modulo $2\pi i \Z$.
	\end{prop}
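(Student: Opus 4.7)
The first statement---on pole locations and residues---is essentially a corollary of Proposition~\ref{prop: symmetry residues periods}, since the term $b\Lambda_0$ is holomorphic and, when $a = y_*$ with $\nu = \varsigma$, the poles of $\Lambda_a^{(\nu)}$ and $\Lambda_{y_*}^{(\varsigma)}$ coincide and their residues cancel. For the substantive existence and uniqueness claim, my plan is to parametrize candidates by $\tau \in \R$ via $a = i\tau$. The map $b \mapsto \oint_B \Omega(i\tau, b)$ is affine with slope $\oint_B \Lambda_0 = -2 m_B^{(0)} \neq 0$, so there is a unique $b = b(\tau) \in \R$ making $\re \oint_B \Omega(i\tau, b(\tau)) = 0$. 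By the second identity in Proposition~\ref{prop: symmetry residues periods}, the $B$-period then equals $((-1)^\nu - (-1)^\varsigma)\pi i/2$ exactly, so the $B$-period condition is satisfied (indeed, modulo $2\pi i\Z$).

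With $b(\tau)$ fixed, it remains to choose $\tau$ so that $\oint_A \Omega(i\tau, b(\tau)) \equiv \alpha i \pmod{2\pi i \Z}$, which, since $\re \oint_A \Omega = 0$, amounts to $\Psi_A(\tau) \equiv \alpha/(2\pi) \pmod{\Z}$. Viewed as a map $\R \to \R/\Z$, the function $\Psi_A$ is continuous, since its only discontinuities occur when the pole $(i\tau)^{(\nu)}$ crosses the $A$-cycle, producing a jump of $\pm 1$ coming from the residue $+1$ of $\Omega$ at $(i\tau)^{(\nu)}$. By Proposition~\ref{prop: injective} this map is injective. From the asymptotic computation already performed---in which the $\tau$-linear contributions in $\oint_A \Lambda_{i\tau}^{(\nu)}$ and $2 b(\tau) m_A^{(0)}$ cancel out---we have $\Psi_A(\tau) \to c/(2\pi) \pmod{\Z}$ as $\tau \to \pm\infty$. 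Therefore $\Psi_A$ extends continuously to the one-point compactification $\R \cup \{\infty\}$, which is homeomorphic to a circle.

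The final step is the topological observation that a continuous injection from a circle into a circle is automatically a homeomorphism onto its image, which by compactness and connectedness must be all of $\R/\Z$. Consequently, for every $\alpha$ with $\alpha - c \notin 2\pi\Z$ (which is precisely what guarantees the preimage is not $\infty$), there exists a unique $\tau \in \R$ solving $\Psi_A(\tau) \equiv \alpha/(2\pi) \pmod{\Z}$, producing the unique pair $(a,b) = (i\tau, b(\tau))$ required. The main subtlety throughout is handling the $\Z$-indeterminacy of $\Psi_A$ correctly: the injectivity from Proposition~\ref{prop: injective}, the continuity across the $A$-cycle, and the limit at $\pm\infty$ all live naturally on the quotient $\R/\Z$ rather than $\R$, and this modular viewpoint is exactly what makes meaningful---and tractable---the statement that the period conditions can be solved modulo $2\pi i \Z$.
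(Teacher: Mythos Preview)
Your proposal is correct and follows essentially the same approach as the paper: fix $b=b(\tau)$ to make the real part of the $B$-period vanish, invoke Proposition~\ref{prop: symmetry residues periods} for the imaginary part, then use continuity, injectivity (Proposition~\ref{prop: injective}), and the limit $\Psi_A(\tau)\to c/(2\pi)$ to conclude that $\Psi_A$ is a bijection from $\R\cup\{\infty\}$ onto $\R/\Z$. Your explicit framing via the circle-to-circle homeomorphism is a slightly cleaner articulation of what the paper phrases as ``$\Psi_A$ is a bijection between $\R\cup\{\infty\}$ and an interval of the form $[q,q+1]$,'' but the substance is identical.
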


	\subsection{Step Three: Solving the Scalar RHP's}
	When the conditions
	\begin{equation}\label{eq: degeneration n kappa}
	n \in 2\Z ,\quad \kappa\in \Z,
	\end{equation}
	take place, the matrix $N$ becomes a solution to the model RHP \eqref{eq:model RHP}, so during this section we can assume the degeneration \eqref{eq: degeneration n kappa} does not occur.
	
	We will now use the meromorphic differential $\Omega(a,b)=\Omega(a,b;\nu,\varsigma_n(\nu))$ as described in the previous section for a special choice of periods, whose existence will be guaranteed by Proposition~\ref{prop: periods injective}. To do so, for $k=1,2$ and $n\in \Z_+$ define numbers $\nu,\varsigma\in \{1,2\}$ by
	\begin{equation}\label{def: varsigma}
	\nu=\nu_{n,k}=n+k+1 \mod 2,\quad \mbox{and} \quad
	\varsigma=\varsigma_k= k+1 \mod 2.
	\end{equation}
	The definition of $\nu$ and $\varsigma$ as above is chosen so as to place the poles of 
	\begin{equation}\label{def: meromorphic differential n k}
	\Omega_n^{(k)}:=\Omega(a,b;\nu_{n,k},\varsigma_k)
	\end{equation}
	in very specific positions. When $n$ is even, the poles $a^{(\nu)}$ and $y_*^{(\nu)}$ are on the same sheet $\mathcal Q_{k+1}$, whereas when $n$ is odd the pole $a^{(\nu)}$ is on the sheet $\mathcal Q_k$ and the pole $y_*^{(\varsigma)}$ is on the other sheet $\mathcal Q_{k+1}$. Obviously, in these comments we identified $\mathcal Q_3=\mathcal Q_1$.
	
	We now choose $a=i\tau$ and $b$ in \eqref{def: meromorphic differential n k} in a very specific way which is formally introduced as the next result.
	
	\begin{corollary}
		Fix $k\in \{1,2\}$ and $n\in \Z_+$. Let $\nu=\nu_{n,k}$ and $\varsigma=\varsigma_{k}$ be as above and $c=c(n,k)$ and $\kappa$ be as in \eqref{def: constant kappa} and \eqref{def: theta divisor}, respectively. Suppose that $2n\kappa-c\in \R\setminus 2\pi\Z$. Then there exists a meromorphic differential $\Omega_n^{(k)}$ with the following properties.
		\begin{enumerate}[(i)]
			\item The only poles of $\Omega_n^{(k)}$, which are simple, are at $y_*^{(\varsigma)}$ as in \eqref{def: zero N} and at another point $a_*^{(\nu)}\in i\R$, with
			$$
			\res_{a_*^{(\nu)}}\Omega_n^{(k)}=1,\qquad \res_{y^{(\varsigma)}}\Omega_n^{(k)}=-1.
			$$
			\item The periods of $\Omega_n^{(k)}$ are
			\begin{equation}\label{eq: periods meromorphic}
			\oint_A \Omega_n^{(k)}= 2n\kappa i,\quad \oint_B \Omega_n^{(k)}=n\pi i
			\end{equation}
			where these identities are understood modulo $2\pi i \Z$.

		\end{enumerate}
	\end{corollary}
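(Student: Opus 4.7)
The plan is to deduce the corollary as an almost immediate consequence of Proposition~\ref{prop: periods injective}, the only non-trivial point being a short parity bookkeeping needed to rewrite the prescribed $B$-period $n\pi i$ (mod $2\pi i\Z$) in the form $((-1)^\nu-(-1)^\varsigma)\pi i/2$ that Proposition~\ref{prop: periods injective} actually delivers. First, set $\alpha := 2\kappa$ and observe that the hypothesis $2\kappa-c\in\R\setminus 2\pi\Z$ is exactly the admissibility condition $\alpha-c\notin 2\pi\Z$ of that proposition applied with the index pair $(\nu,\varsigma)=(\nu_{n,k},\varsigma_k)$. This yields a unique pair $a\in i\R$ and $b\in\R$ such that the differential
\[
\Omega_n^{(k)} := \Omega\bigl(a,b;\nu_{n,k},\varsigma_k\bigr)
\]
has $A$-period congruent to $2\kappa i$ and $B$-period congruent to $((-1)^\nu-(-1)^\varsigma)\pi i/2$, both modulo $2\pi i\Z$. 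Declaring $a_*:=a$, property (i) is then nothing but Proposition~\ref{prop: symmetry residues periods}, which records that $\Omega(a,b;\nu,\varsigma)$ has only simple poles at $a^{(\nu)}$ and $y_*^{(\varsigma)}$, with residues $+1$ and $-1$ respectively, and by construction $a_*\in i\R$.

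Next I would verify that the $B$-period actually produced, $((-1)^\nu-(-1)^\varsigma)\pi i/2$, agrees with the desired $n\pi i$ modulo $2\pi i\Z$. Writing $\delta := ((-1)^\nu-(-1)^\varsigma)/2\in\{-1,0,+1\}$, the claim reduces to $\delta\equiv n\pmod 2$. From the definitions in \eqref{def: varsigma} one has $\nu-\varsigma\equiv n\pmod 2$, so $\nu\equiv\varsigma\pmod 2$ precisely when $n$ is even, in which case $\delta=0\equiv n\pmod 2$; when $n$ is odd, $\delta=\pm 1$ is itself odd and again $\delta\equiv n\pmod 2$. This short parity computation is really the only bookkeeping step in the argument, and it automatically turns the $B$-period congruence from Proposition~\ref{prop: periods injective} into the desired $n\pi i\pmod{2\pi i\Z}$.

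Finally, for the parity-only dependence, I would simply observe that the inputs that feed into the construction, namely the index pair $(\nu_{n,k},\varsigma_k)$ and the prescribed periods $2\kappa i$ and $n\pi i$ taken modulo $2\pi i\Z$, all depend on $n$ only through $n\bmod 2$ (with $k$ and $\lambda$ fixed). The uniqueness clause of Proposition~\ref{prop: periods injective} then forces $\Omega_n^{(k)}$ to be determined by the parity of $n$ alone. I do not expect a genuine obstacle: the injectivity analysis of the period map $\Psi_A$ and the explicit construction of $\Omega(a,b)$ have already been carried out in the preceding subsection, so the present corollary amounts essentially to choosing the correct period data and invoking the earlier proposition; the only place that requires any thought is the elementary parity identity $\delta\equiv n\pmod 2$.
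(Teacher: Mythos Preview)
Your proposal is correct and follows essentially the same route as the paper: both invoke Proposition~\ref{prop: symmetry residues periods} for the pole/residue structure and Proposition~\ref{prop: periods injective} with $\alpha=2\kappa$ for the existence and uniqueness of $(a_*,b_*)$, and your parity computation $\delta\equiv n\pmod 2$ makes explicit a step the paper leaves to the reader. The one small point you omit is the paper's remark that the degeneration $a_*=y_*$ with $\nu=\varsigma$ (which would collapse $\Omega_n^{(k)}$ to a holomorphic differential) is ruled out by the standing assumption that \eqref{eq: degeneration n kappa} does not occur; this is worth mentioning since Proposition~\ref{prop: periods injective} expressly excludes that case.
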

	\begin{proof}
		It is a consequence of Propositions~\ref{prop: symmetry residues periods} and \ref{prop: periods injective}. We should also remark that because we are assuming \eqref{eq: degeneration n kappa} does not occur, the poles $y_*^{(\varsigma)}$ and $a_*^{(\nu)}$ never coincide, that is, the degeneration $a_*=y_*$ and $\nu=\varsigma$ in Propositions~\ref{prop: symmetry residues periods} and \ref{prop: periods injective} never takes place.
	\end{proof}
	
	Set $\Gamma := \left(-\infty, -1\right)\cup\gamma\cup\widehat{ \gamma}\cup\left(1, \infty\right)$. For $z\in \C\setminus (\Gamma\cup \{a_*,y_*\})$ define
	\begin{equation}\label{def: u1}
	u^{(k)}_1(z) = \int_1^z \Omega_n^{(k)},\quad k=1,2.
	\end{equation}
	The path of integration always stays on the first sheet $\mathcal Q_1$ and is defined as follows. For $z$ lying above $\Gamma$, the path of integration connects $1$ to $z$ without crossing $\Gamma$, except of course at the initial point $1$. For $z$ lying below $\Gamma$, the path of integration emanates upwards from $1$ and moves to the region below $\Gamma$ crossing the interval $\left(-\infty, -1\right)$. The path then remains below $\Gamma$ until reaching $z$. 
	
	Next, for $z\in \mathcal \C\setminus(\Gamma\cup \{a_*,y_*\})$ we define
	\begin{equation}\label{def: u2}
	u_2^{(k)}(z) = \int_1^z \Omega_n^{(k)} - in\kappa,\quad k=1,2.
	\end{equation}
	The path of integration for $u_2^{(k)}$ is entirely in $\mathcal Q_2$ and specified as follows. For $z$ lying below $\Gamma$, the path of integration connects $1$ to $z$ on the second sheet without crossing $\Gamma$, except at $1$. For $z$ lying above $\Gamma$, the path of integration emanates downwards from $1$ and moves to the region above $\Gamma$ across the interval $\left(-\infty, -1\right)$. The path then remains above $\Gamma$ until meeting $z$. 
	
	Intuitively, one can think of the path of integration for $u_2^{(k)}$ as the mirror image on the other sheet of the path used for $u_1^{(k)}$. Also, because the residues of $\Omega_n^{(k)}$ are $\pm 1$, these functions are well-defined analytic functions modulo $2\pi i$. 
	
	The main properties of $u_j^{(k)}$ are collected in the next result, where all equalities are understood modulo $2\pi i$.
	
	\begin{prop}\label{prop: u jumps}
		\begin{enumerate}[(a)]
			\item 	The functions $u_j^{(k)}$, $j,k = 1,2$, verify the following jumps for $z \in \Gamma$.
			\begin{enumerate}[(i)]
				\item For $x \in \left(-\infty, -1\right)\cup\left(1, \infty\right)$, 
				\begin{equation}\label{eq: u on real line}
				u_{j,+}^{(k)}(x)= u_{j,-}^{(k)}(x).
				\end{equation}
				\item For $z\in \gamma_j$, $j=1,2$, 
				\begin{equation}
				u_{1,\pm}^{(k)}(z) = u_{2,\mp}^{(k)}(z)+(-1)^{j}in\kappa. \label{eq: u on g1}
				\end{equation}
				\item For $z\in\hat{\gamma}$, 
				\begin{equation*}
				u_{j,+}^{(k)}(z) = u_{j,-}^{(k)}(z)+i\pi n.\label{eq: u on gap}
				\end{equation*}
			\end{enumerate}
			
			\item 	For some constants $k_{j,k} \in \mathbb{C}$ the asymptotic behavior
			\begin{equation}\label{eq: u infinity}
			u_j^{(k)}(z) = k_{j,k} + \mathcal{O}\left(\frac{1}{z}\right), \qquad z \to \infty, 
			\end{equation}	
			holds. 
			
			\item	Furthermore, the function $u_j^{(j)}$ is analytic near $y_*$, whereas when $j\not=k$
			\begin{equation}\label{eq: u2 near ys}
			u_j^{(k)}(z) = -\log\left(z-y_*\right) + \mathcal{O}\left(1\right), \qquad z \to y_*.
			\end{equation}
			
			\item	Finally, the behavior near $a_*$ is as follows. 
			\begin{enumerate}[$\bullet$]
				\item For $n$ even, the function $u_{j}^{(j)}$ is analytic near $a_*$, whereas for $j\neq k$
				\begin{equation}\label{eq: u near a}
				u_j^{(k)}(z) = \log\left(z-a\right) + \mathcal{O}\left(1\right), \qquad z \to a.
				\end{equation}	
				
				\item For $n$ odd, when $j\neq k$ the function $u_{j}^{(k)}$ is analytic near $a_*$, whereas
				\begin{equation}\label{eq: u near a n odd}
				u_j^{(j)}(z) = \log\left(z-a\right) + \mathcal{O}\left(1\right), \qquad z \to a.
				\end{equation}
			\end{enumerate}
		\end{enumerate}
	\end{prop}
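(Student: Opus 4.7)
The plan is to treat each $u_j^{(k)}$ as the line integral on the sheet $\mathcal Q_j$ of a meromorphic differential whose residues and period data were fixed in the preceding subsection, and then read off the four statements (a)--(d) using the residue theorem and the homology of the genus-one surface $\mathcal Q$. All equalities throughout are understood modulo $2\pi i$, in line with the statement.

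For part (a), the intervals $(-\infty,-1)\cup(1,\infty)$ are not branch cuts of $\mathcal Q$; $\Omega_n^{(k)}$ is analytic across them and the two admissible paths defining $u_{j,\pm}^{(k)}$ can be deformed into one another without crossing any singularity, giving (i). For (ii), the $+$-side of $\gamma_j$ on sheet $\mathcal Q_1$ is identified with the $-$-side of $\gamma_j$ on sheet $\mathcal Q_2$, so $u_{1,+}^{(k)}(z)$ and $u_{2,-}^{(k)}(z)+in\kappa$ (with the shift $+in\kappa$ absorbing the constant from \eqref{def: u2}) are both integrals of $\Omega_n^{(k)}$ ending at the same point of $\mathcal Q$; their difference is the period of $\Omega_n^{(k)}$ over a closed cycle on $\mathcal Q$ that crosses $\gamma_j$ exactly once and is based at the branch point $1$. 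Expressing this cycle in the homology basis $\{A,B\}$ (modulo small loops around the two poles of $\Omega_n^{(k)}$, each of which contributes a multiple of $2\pi i$ and hence vanishes in our reduction), the jump becomes a $\mathbb Z$-linear combination of $\oint_A\Omega_n^{(k)}=2i\kappa$ and $\oint_B\Omega_n^{(k)}=in\pi$, and a direct computation yields the value $(-1)^j in\kappa$, with the sign $(-1)^j$ tracking the opposite orientations of $\gamma_1$ and $\gamma_2$ relative to the $A$-cycle. For (iii), $\hat\gamma$ is not a branch cut; the paths on a single sheet that pass on its two sides differ by a closed loop encircling $\gamma_1$, i.e.\ a $B$-cycle, so the jump equals $\oint_B\Omega_n^{(k)}=in\pi$.

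Part (b) follows from a Laurent expansion at $\infty^{(1,2)}$. In $\Omega(a,b)=\Lambda_a^{(\nu)}-\Lambda_{y_*}^{(\varsigma)}+b\Lambda_0$, the simple-pole contributions of the two $\Lambda$-terms at infinity cancel, so $\Omega_n^{(k)}=O(z^{-2})\,dz$ as $z\to\infty$ on each sheet, and the integrals $\int_1^z \Omega_n^{(k)}$ converge to finite constants $k_{j,k}$ with $O(1/z)$ remainder. Parts (c) and (d) are obtained by reading off the sheet on which each simple pole of $\Omega_n^{(k)}$ sits, using \eqref{def: varsigma}, together with the observation that $u_j^{(k)}$ only feels those poles that happen to lie on $\mathcal Q_j$. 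The pole $y_*^{(\varsigma_k)}$ lies on the sheet opposite $\mathcal Q_k$, giving analyticity of $u_j^{(j)}$ near $y_*$ and, for $j\neq k$, a logarithmic singularity with coefficient dictated by the residue $-1$. For the pole $a_*^{(\nu_{n,k})}$ the analysis splits by parity of $n$: when $n$ is even we have $\nu_{n,k}=\varsigma_k$, so both poles sit on the same sheet and only the off-diagonal $u_j^{(k)}$ ($j\neq k$) acquires a logarithm with coefficient $+1$; when $n$ is odd the poles sit on opposite sheets, exchanging the diagonal and off-diagonal roles, which is exactly what the second bullet of (d) asserts.

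The principal bookkeeping obstacle is part (a)(ii): identifying the precise homology class of the loop based at the branch point $1$ and extracting the sign $(-1)^j$ from the orientations of $\gamma_1,\gamma_2$ inherited from Theorem~\ref{thm: support zero distribution}. Once that calculation is carried through, (iii) is a simpler version of the same idea (involving only the $B$-period), and (b)--(d) reduce to transparent residue and expansion calculations using the explicit form of the building blocks $\Lambda_0$ and $\Lambda_a^{(\nu)}$.
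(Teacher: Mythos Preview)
Your approach is essentially the paper's: express each jump as the period of $\Omega_n^{(k)}$ over a closed cycle on $\mathcal Q$ and evaluate using the prescribed $A$- and $B$-periods \eqref{eq: periods meromorphic}, then read off (b)--(d) from the explicit residue data at $\infty^{(1,2)}$, $y_*^{(\varsigma)}$, and $a_*^{(\nu)}$.

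One correction is needed in your treatment of (a)(i). For $x\in(1,\infty)$ the two admissible paths do \emph{not} deform into each other without crossing singularities. With the path convention following \eqref{def: u1} (the minus-side path leaves $1$ upward, crosses $(-\infty,-1)$, and returns from below $\Gamma$), the difference $u_{1,+}^{(k)}(x)-u_{1,-}^{(k)}(x)$ is the integral of $\Omega_n^{(k)}$ over a clockwise loop on $\mathcal Q_1$ enclosing both $\gamma_1$ and $\gamma_2$, and this loop may well contain a pole of $\Omega_n^{(k)}$. The paper handles this by deforming the loop to $\infty$ and observing that any residue picked up is $\pm 1$, hence contributes only a multiple of $2\pi i$ and vanishes in the mod-$2\pi i$ reduction you already invoked. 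With that repair your argument and the paper's coincide; the paper is also slightly more explicit in (a)(ii), identifying the relevant closed cycle directly as $-A$ rather than leaving it as an unspecified $\Z$-combination of $A$, $B$, and residue loops.
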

	\begin{proof}
		First let $x \in (1, \infty)$. Then
		\begin{equation*}
		u^{(k)}_{1,+}(x)-u^{(k)}_{1,-}(x) = \oint_C \Omega_n^{(k)},
		\end{equation*}
		where $C$ is a clockwise loop on the first sheet of $\mathcal{Q}$ surrounding both $\gamma_1$ and $\gamma_2$. By transferring this loop to infinity, we have that
		\begin{equation*}
		u^{(\nu)}_{1,+}(x) = u^{(\nu)}_{1,-}(x), \qquad x \in \left(1,\infty\right).
		\end{equation*}
		The deformation of $C$ to infinity may pick up residue contributions from the poles $a$ and $y_*$ depending on their locations, but as all residues are $\pm 1$, this contribution will only contribute a factor of $2\pi i$. 
		
		Now let $x\in \left(-\infty, 1\right)$. Then, 
		\begin{equation*}
		u^{(k)}_{1,+}(x) - u^{(k)}_{1,-}(x) = \oint_C \Omega_n^{(k)},\qquad x \in \left(-\infty,1\right).
		\end{equation*}
		where now $C$ is contractible within $\mathcal{Q}$, so that \eqref{eq: u on real line} holds for $j=1$. In a similar fashion we compute \eqref{eq: u on real line} for $j=2$. 
		
		Next, take $z \in \gamma_1$. In this case,
		\begin{equation*}
		u^{(k)}_{1,+}(z)-u^{(k)}_{2,-}(z) = \oint_C \Omega_n^{(k)} +i\kappa n, 
		\end{equation*}
		where $C$ can be deformed to the cycle $-A$, so this integral can be computed using \eqref{eq: periods meromorphic} and yields \ref{eq: u on g1} for $j=1$. The case $j=2$, as well as $z\in \gamma_2$, follow analogously.
		
		For the final jump, take $z \in \hat{\gamma}$. Then 
		\begin{equation*}
		u^{(k)}_{1,+}(z) -u^{(k)}_{1,-}(z)= \oint_{-B} \Omega_{a,b}, 
		\end{equation*}
		and this integral is computed using \eqref{eq: periods meromorphic}.
		
		The asymptotic behavior \eqref{eq: u infinity} follows from the fact that $\Omega^{(k)}$ is regular near infinity on the both sheets of $\mathcal{Q}$.

		Next, as $\Omega^{(k)}$ has a simple pole of residue $-1$ at $y_*^{(j)}$ with $k\neq j$, we know that
		\begin{equation*}
		\Omega^{(k)} = \left(-\frac{1}{z-y_*} + \mathcal{O}\left(1\right)\right)\, dz, \qquad z \to y_*^{(j)}, 
		\end{equation*}
		so upon integration, we have \eqref{eq: u2 near ys}. A similar argument also provides provides \eqref{eq: u near a}--\eqref{eq: u near a n odd}.
	\end{proof}

	Finally, we define
	\begin{equation}\label{eq: vj defn}
	v^{(k)}_j(z) = \exp\left(u_j(z)\right),\quad k,j=1,2.
	\end{equation}
	and prove our main result of this section.
	
	\begin{theorem}\label{thm: global parametrix existence}
		For $c$ and $\kappa$ as in \eqref{def: theta divisor} and \eqref{def: constant kappa}, suppose that $2n\kappa-c\in \R\setminus 2\pi\Z$. Then the model Riemann-Hilbert problem \eqref{eq:model RHP} has a unique solution $M=(M_{jk})$. Its entries satisfy
		\begin{enumerate}[(i)]
			\item For $n$ even, $M_{11}$ and $M_{22}$ are never zero, whereas $M_{12}$ and $M_{21}$ have a unique zero at $a_*\in i\R$. 
			\item For $n$ odd, $M_{12}$ and $M_{21}$ are never zero, whereas $M_{11}$ and $M_{22}$ have a unique zero at $a_*\in i\R$.
		\end{enumerate}
	
	\end{theorem}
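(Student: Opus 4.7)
The plan is to directly verify that the functions $v_j^{(k)}:=\exp(u_j^{(k)})$ from \eqref{eq: vj defn} satisfy the six conditions listed immediately after \eqref{eq: M defn}, so that existence of a solution $M$ of \eqref{eq:model RHP} in the form \eqref{eq: M defn} is delivered at once by Proposition~\ref{prop: model rhp to scalar}. The conceptual point I would emphasize first is that although each $u_j^{(k)}$ is only defined modulo $2\pi i$, this ambiguity disappears under exponentiation, so $v_j^{(k)}$ is a genuine single-valued function on its domain. From there, the list (1)--(6) is essentially a transliteration of Proposition~\ref{prop: u jumps}: the additive jumps $\pm i n\kappa$ and $in\pi$ across $\gamma_j$ and $\widehat\gamma$ exponentiate to the required $e^{\pm i n\kappa}$ and $(-1)^n$; the logarithmic singularity of $u_j^{(k)}$ with $j\neq k$ at $y_*$ with coefficient $-1$ produces a simple pole of $v_j^{(k)}$ which is canceled by the simple zero of $N_{kj}$ from Proposition~\ref{prop: N zeros}; the limiting constants $k_{j,k}$ at infinity yield $c_j=e^{k_{j,j}}\neq 0$; and boundedness near the endpoints of $\gamma_1\cup\gamma_2$ is automatic because these points are regular on $\mathcal{Q}$ and hence regular for $\Omega_n^{(k)}$.

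Next I would dispatch uniqueness in the usual way. A short computation shows that $\det J_M\equiv 1$ on $\gamma\cup\widehat{\gamma}$, so $\det M$ extends analytically across the jump contours with at worst fourth-root singularities at the four branch points, and since $\det M\to 1$ at infinity, Liouville's theorem forces $\det M\equiv 1$. The ratio of any two candidate solutions is then entire with limit $I$ at infinity, so equals $I$ identically.

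The final step is to read off the zero structure of $M$ from the interplay between Proposition~\ref{prop: N zeros} and part (d) of Proposition~\ref{prop: u jumps}. For $n$ even, $v_j^{(j)}$ is analytic and nonvanishing near $a_*$ while $v_j^{(k)}$ with $j\neq k$ acquires a simple zero there; combined with the non-vanishing of $N_{11},N_{22}$ and the fact that the only zeros of $N_{12},N_{21}$, located at $y_*$, are absorbed by the pole of the corresponding $v_j^{(k)}$, this yields case (i). For $n$ odd the diagonal and off-diagonal roles of $v$ swap, producing case (ii). The parity-only dependence of $M_{11}$ is inherited from $v_1^{(1)}$ via the parity-only dependence of $\Omega_n^{(k)}$ noted just after \eqref{def: meromorphic differential n k}, together with the $n$-independence of $N_{11}$. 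The main obstacle is not the verification itself, which is largely bookkeeping, but rather confirming that the parity-dependent case split in Proposition~\ref{prop: u jumps}(d) lines up precisely with the zero pattern dictated by (i)--(ii); this is where the carefully crafted choice of $\nu_{n,k}$ and $\varsigma_k$ in \eqref{def: varsigma} earns its keep, with the non-degeneracy condition $2\kappa-c\notin 2\pi\Z$ being exactly what Proposition~\ref{prop: periods injective} was engineered to accommodate.
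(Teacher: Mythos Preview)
Your proposal is correct and follows essentially the same approach as the paper's own proof: verify via Proposition~\ref{prop: u jumps} that the $v_j^{(k)}$ satisfy conditions (1)--(6), invoke Proposition~\ref{prop: model rhp to scalar} for existence, appeal to the standard $\det J_M\equiv 1$ Liouville argument for uniqueness, and read off the zero pattern from part~(d). The paper's version is more terse, but you have filled in precisely the details it omits without deviating from its strategy.
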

	
	\begin{proof}
		Uniqueness of $M$ follows in the standard way for Riemann-Hilbert problems, see for instance \cite{deift1999orthogonal}.
		
		To prove existence, we use $v_j^{(k)}$ as in \eqref{eq: vj defn} and set $M$ as in \eqref{eq: M defn}. By Proposition~\ref{prop: model rhp to scalar} it is enough to verify that $v_j^{(k)}$ are solutions to the RHP (1)--(6) in \eqref{v prop} {\it et seq.}. In turn, these scalar RHP conditions follow immediately from Proposition~\ref{prop: u jumps} (a)--(c), and we skip the details.
		
		Finally, the properties of the zeros of $M_{j,k}$ follow from Proposition~\ref{prop: u jumps}--(d).
	\end{proof}
	
	\subsection{Step Four: Analysis of $2n\kappa-c$}\label{section: analysis theta divisor}
	
	The whole construction of the global parametrix that ended up with Theorem~\ref{thm: global parametrix existence} relies on the assumption that $2n\kappa -c \in \R\setminus 2\pi\Z$. It turns out that we can actually remove this restriction, provided that $n$ is even.
	
	Indeed, recall that the meromorphic differential $\Omega^{(k)}_n$ as in \eqref{def: meromorphic differential n k} has a pole at $a^{(n+k+1)}$. Assuming that $2n\kappa-c\in 2\pi\Z$, when we try to match the periods of $\Omega_n^{(k)}$ with the help of the injectivity of $\Psi_A$ in \eqref{def: psi A}, what happens is that the pole at $a=i\tau $ moves to $\infty^{(n+k+1)}$. Nevertheless, the meromorphic differential $\Omega_n^{(k)}$ still has a limit, as one can see performing an asymptotic analysis very similar to the ones that led to \eqref{def: theta divisor}. In such a case, $\Omega_n^{(k)}$ becomes the unique differential  whose only singularities are simple poles at $\infty^{(n+k+1)}$ and $y_*^{(k+1)}$, with residues $+1$ and $-1$, respectively. 
	
	In this way, we can still define $u^{(k)}_j$ as in \eqref{def: u1}--\eqref{def: u2} with a path of integration on the sheet $j$. What will happen now is that when $n$ is even and
	$j\neq k$, as $z\to \infty$ the path of integration extends to the pole at $\infty^{(k+1)}$, and as such $u_j^{(k+1)}\approx -\log z$ as $z\to \infty$. This means that now $v_j^{(k)}(z)\to 0$ as $z\to \infty$, which is no problem at all as the behavior \eqref{eq: v infty nondiag} is still satisfied.
	
	In contrast, when $n$ is odd the path of integration for $u_k^{(k)}$ is now the one that extends to the pole at $\infty^{(k)}$, and consequently $v_j^{(j)}$ vanishes as $z\to \infty$. In such a case, the condition \eqref{eq: v infty} is longer satisfied for a nonzero constant $c_j$.
	
	We can also verify that the condition $2n\kappa-c \in 2\pi \Z$ occurs only for countably many pairs $(n,\lambda)$, if any. Indeed, first notice that $2n\kappa -c$ depends real-analytically on $\lambda>\lambda_c$. Likewise, $c$ does not depend on $k,n$ but only (real-analytically) on $\lambda>\lambda_c$, and we write $2n\kappa-c=2n\kappa(\lambda)-c(\lambda)$. Thus, for any $\ell\in \Z$ and fixed $n\in \N$, the identity
	$$
	2n\kappa(\lambda)-c(\lambda)=2\pi\ell
	$$
	has at most countably many solutions $\lambda$ in $\R$, provided $2n\kappa-c$ is not the constant function. For fixed $n$, a straightforward (but long, so for the sake of simplicity, omitted) calculation shows that
	$$
	2n\kappa(\lambda)-c(\lambda)=g(\lambda)(\alpha+\boh(1)),
	$$
	where $g(\lambda)=\max\{\lambda,\lambda^\delta \log\lambda\}$, $\delta$ as in \eqref{eq: order xstar}, and a constant $\alpha\neq 0$, so that $2n\kappa-c$ is indeed not constant. Now, we fix $\varepsilon >0$ and define the set $\Theta^*_\varepsilon$ as
	\begin{equation}
		\Theta_\varepsilon^* := \left\{(n,\lambda): \text{dist}\left(2n\kappa(\lambda)-c(\lambda),2\pi \Z \right)< \varepsilon\right\}.
	\end{equation}

	By virtue of this discussion, Theorem~\ref{thm: global parametrix existence} is improved to the following form.
	
	\begin{theorem}\label{thm: global parametrix existence_2}
		Fix $\varepsilon>0$. For $n$ even with $\lambda>\lambda_c$ or $n$ odd with $\lambda>\lambda_c$ and $(n,\lambda)\not\in \Theta^*_\varepsilon$, the model Riemann-Hilbert problem \eqref{eq:model RHP} has a unique solution $M=(M_{jk})$. Its diagonal entries satisfy
		\begin{enumerate}[(i)]
			\item For $n$ even, $M_{11}$ and $M_{22}$ are never zero.
			\item For $n$ odd, $M_{11}$ and $M_{22}$ have a unique zero at $a_*\in i\R$ which is simple.
		\end{enumerate}
		Furthermore, the entries of $M$ remain bounded on compacts as $n\to\infty$ with $n$ even or $n$ odd with $(n,\lambda)\notin \Theta_\varepsilon$.
	\end{theorem}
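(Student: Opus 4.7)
The plan is to deduce this theorem from Theorem~\ref{thm: global parametrix existence} by carefully analyzing what happens at the possibly degenerate values $2\kappa(\lambda)-c(\lambda)\in 2\pi\Z$, and then showing that these values form a discrete set. Uniqueness of $M$ follows from the standard argument that $\det M\equiv 1$ and any ratio of two solutions is entire and tends to $I$ at $\infty$, so I would not dwell on it.

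My first step would be to establish discreteness of $\Theta^*$. Both $\kappa(\lambda)$ and $c(\lambda)$ are real-analytic on $(\lambda_c,\infty)$: $\kappa$ because the spectral curve moves real-analytically with $\lambda$ as a consequence of Theorem~\ref{thm: boutroux condition} and an implicit-function argument applied to the Boutroux equation, and $c$ via its explicit expression \eqref{def: theta divisor} in terms of the moments \eqref{eq: moments differential} and the periods of $\Lambda_{y_*}^{(\varsigma)}$, all real-analytic in $\lambda$. Hence $F(\lambda):=2\kappa(\lambda)-c(\lambda)$ is real-analytic, and either $F^{-1}(2\pi\Z)$ is discrete or $F$ is constant. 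To rule out constancy, I would exploit \eqref{eq: order xstar} to track $y_*\to\infty$ and $z_*\to 1$ as $\lambda\to\infty$, asymptotically expand the integrals in \eqref{def: constant kappa} and \eqref{def: theta divisor}, and extract the leading behaviour $F(\lambda)=g(\lambda)(\alpha+\boh(1))$ with $g$ unbounded and $\alpha\neq 0$, exactly as sketched in the text after \eqref{eq: order xstar}. This asymptotic computation is the most involved piece, but only the leading coefficient needs to be identified.

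Next, when $2\kappa - c \in \R\setminus 2\pi\Z$, Theorem~\ref{thm: global parametrix existence} directly delivers existence and the claimed zero structure, so nothing further is needed. When $2\kappa-c\in 2\pi\Z$, I would invoke the fact that Proposition~\ref{prop: periods injective} arises from surjectivity of $\Psi_A$ from $\R\cup\{\infty\}$ onto an interval of length one; the excluded value $\tau=\infty$ corresponds to the unique meromorphic differential on $\mathcal Q$ whose only singularities are simple poles at $\infty^{(\nu)}$ and $y_*^{(\varsigma)}$ with residues $+1$ and $-1$. Using this limiting $\Omega_n^{(k)}$, I would again define $u_j^{(k)}$ as in \eqref{def: u1}--\eqref{def: u2} and $v_j^{(k)}=\exp u_j^{(k)}$. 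The jump relations and behaviour near $y_*$ of Proposition~\ref{prop: u jumps}(a)--(c) remain valid verbatim; only the behaviour at $\infty$ changes.

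For $n$ even the pole at $\infty^{(\nu)}$ lies on the opposite sheet from the path of $u_j^{(j)}$, so $v_j^{(j)}$ is unaffected and \eqref{eq: v infty} still holds with a nonzero constant $c_j$, while the path defining $u_j^{(k)}$ with $j\neq k$ does reach this pole, forcing $v_j^{(k)}(z)=\Boh(1/z)$, which is still compatible with \eqref{eq: v infty nondiag}. Thus Proposition~\ref{prop: model rhp to scalar} still yields a valid $M$, and since no logarithmic singularity appears in any $u_j^{(j)}$ at finite points, the diagonal entries $M_{11},M_{22}$ have no zeros, giving (i). For $n$ odd the parity labels swap, so the pole at $\infty^{(\nu)}$ lies on the same sheet as the path of $u_j^{(j)}$ and forces $v_j^{(j)}(z)\to 0$ as $z\to\infty$, in violation of \eqref{eq: v infty}. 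Consequently no $M$ of the form \eqref{eq: M defn} can be produced, and such $\lambda$ must be excluded; this is exactly the defining property of $\Theta^*$. Finally, statement (ii) on the zeros of $M_{11},M_{22}$ follows directly from Proposition~\ref{prop: u jumps}(d), which places a logarithmic singularity of $u_j^{(j)}$ at $a_*$ precisely when $n$ is odd. The principal obstacle is thus the non-constancy argument for $F$ in the first step.
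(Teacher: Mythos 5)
Your proposal matches the paper's own argument essentially step for step: you establish discreteness of $\Theta^*$ via real-analyticity and an asymptotic non-constancy estimate, dispose of the non-degenerate case by invoking Theorem~\ref{thm: global parametrix existence}, and handle the degenerate case $2\kappa-c\in 2\pi\Z$ by letting the pole of $\Omega_n^{(k)}$ migrate to $\infty^{(\nu)}$, observing that this spoils the normalization \eqref{eq: v infty} only when $n$ is odd. The one acknowledged gap in both — the asymptotic identification of $g(\lambda)$ and the leading constant $\alpha\ne 0$ — is deferred in the paper as well, so your outline is faithful to the intended proof.
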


%--------------------------------------------------------------------------
% RH2
%--------------------------------------------------------------------------
\section{Asymptotic Analysis - Part II}\label{sec: conclusion steepest descent}

	%-------------------------------------------------------------------------------------------------
	% Local Parametrices
	%-------------------------------------------------------------------------------------------------
	
	In this section we conclude the nonlinear steepest descent analysis.
	
	\subsection{Construction of the Local Parametrices}
	In our analysis, we need to construct four local parametrices, one for each of the endpoints of $\gamma_1\cup\gamma_2$. At the endpoints $z_*$ and $-\overline z_*$ (soft edges) we construct Airy parametrices, whereas at the endpoints $+1$ and $-1$, we construct Bessel parametrices. The construction is now very standard, and for completeness we only give the final form of the solutions near $z=1,z_*$. To construct the other parametrices, we could either proceed analogously or explore the symmetry with respect to the imaginary axis.
	
	\subsubsection{Hard Edges}
	Let $D_4:=D_\delta\left(1\right)$ be a disc centered at $1$ of small radius $\delta>0$. We seek a local parametrix, $P^{(4)}(z)$, defined on $D_4$, which is the solution to the following Riemann-Hilbert problem:
	\begin{subequations}\label{eq: local RHP hard edge}
		\begin{alignat}{2}
		&P^{(4)}(z) \text{ is analytic in } D_1 \backslash \Gamma_S, \\
		&P^{(4)}_+(z)= P^{(4)}_-(z)J_S(z), \qquad && z \in D_4 \cap \Gamma_S, \\
		&P^{(4)}(z) = \left(I + \mathcal{O}\left(\frac{1}{n}\right)\right)M(z), \qquad && \text{uniformly on } \partial D_4 \text{ as } n \to \infty. \label{eq: matching condition hard edge}
		\end{alignat}
	\end{subequations}
	We will also require that $P^{(4)}(z)$ has a continuous extension to $\overline{D_4} \backslash \Gamma_S$ and remains bounded as $z \to 1$. For $\phi$ as in \eqref{eq: phi 4 eqn}, we seek $P^{(4)}(z)$ in the form
	\begin{equation}\label{eq: P 4 defn in terms of U}
	P^{(4)}(z) = U^{(4)}(z) e^{n\phi(z)\sigma_3},
	\end{equation}
	where $U^{(4)}(z)$ solves a Riemann-Hilbert problem which can be explicitly solved using Bessel functions as in \cite{deano2014large,kuijlaars2004riemann}. We describe this solution next.
	
	Set
	\begin{subequations}
		\begin{equation}
		b_1(\zeta) = H_0^{(1)}\left(2\left(-\zeta\right)^{1/2}\right), \qquad b_2(\zeta) = H_0^{(2)}\left(2\left(-\zeta\right)^{1/2}\right),
		\end{equation}
		\begin{equation}
		b_3(\zeta) = I_0\left(2\zeta^{1/2}\right), \qquad b_4(\zeta) = K_0\left(2\zeta^{1/2}\right),
		\end{equation}
	\end{subequations}
	where $I_0$ is the modified Bessel function of the first kind, $K_0$ is the modified Bessel function of the second kind, and $H_0^{(1)}$ and $H_0^{(2)}$ are Hankel functions of the first and second kind, respectively. 
	The Bessel parametrix is defined by
	\begin{align}\label{eq: Bessel parametrix}
	B(\zeta) = \begin{cases}
	\begin{pmatrix}
	\frac{1}{2} b_2\left(\zeta\right) & -\frac{1}{2}b_1\left(\zeta\right) \\
	-\pi \zeta^{1/2} b_2'\left(\zeta\right) & \pi \zeta^{1/2} b_1'\left(\zeta\right)
	\end{pmatrix}, \qquad & -\pi < \arg \zeta < -\frac{2\pi}{3}, \\
	\begin{pmatrix}
	b_3(\zeta) & \frac{i}{\pi}b_4(\zeta) \\
	2\pi i \zeta^{1/2} b_3'(\zeta) & -2\zeta^{1/2} b_4'(\zeta)
	\end{pmatrix}, \qquad & \left|\arg \zeta\right| < \frac{2\pi}{3}, \\
	\begin{pmatrix}
	\frac{1}{2} b_1\left(\zeta\right) & \frac{1}{2}b_2\left(\zeta\right) \\
	\pi \zeta^{1/2} b_1'\left(\zeta\right) & \pi \zeta^{1/2} b_2'\left(\zeta\right)
	\end{pmatrix}, \qquad & \frac{2\pi}{3} < \arg \zeta < \pi.
	\end{cases}
	\end{align}
	Let $f_{n,B}$ being the conformal map
	\begin{equation}\label{eq: Bessel conformal map}
	f_{n,B}(z) = n^2 f_B(z), \qquad \text{where} \qquad f_B(z) = \frac{1}{4}\left(-\phi(z)+\frac{i\kappa}{2}\right)^2.
	\end{equation}
	Recall from \eqref{eq: properties phi 1} that
	\begin{equation}
		\phi_+(z) + \phi_-(z) = i\kappa, \qquad z \in \gamma_1, 
	\end{equation}
	so that 
	\begin{equation}
		\left(-\phi(z) +\frac{i\kappa}{2}\right)_+ = -\left(-\phi(z)+\frac{i\kappa}{2}\right)_-, \qquad z\in\gamma_1,
	\end{equation}
	so that $\left(-\phi +\frac{i\kappa}{2}\right)^2$ is analytic in $D_4\backslash\{1\}$. However, as $\phi(1)=0$, we see that the singularity is removable, so that $f_B$ is actually analytic in all of $D_4$. We see from the definition of $\phi$ in \eqref{eq: phi 4 eqn}, 
	\begin{equation}
		-\phi(z) + \frac{i\kappa}{2} = -\sqrt{\frac{4+\lambda\left(4i+\lambda\left(x^2-1\right)\right)}{2}}\sqrt{z-1} + \mathcal{O}\left(\left(z-1\right)^{3/2}\right), \qquad z\to 1,
	\end{equation}
	so that
	\begin{equation}
		f_B(z) = \frac{4+\lambda\left(4i+\lambda\left(x^2-1\right)\right)}{8}\left(z-1\right) + \mathcal{O}\left(\left(z-1\right)^2\right), \qquad z\to 1. 
	\end{equation}
	As $f'_{n,B}(1)\not=0$, $f_{n,B}$ is indeed a conformal mapping on a neighborhood of 1. With $f_{n,B}$ defined as above, the matrix $U^{(4)}$ is
	\begin{equation}\label{eq: U4 eqn}
	U^{(4)}(z) = E^{(4)}(z) B(f_{n,B}(z)),
	\end{equation}
	where
	\begin{equation}
	E^{(4)}(z) = M(z) e^{-\frac{in\kappa}{2}\sigma_3}L^{(4)}(z)^{-1}, 
	\quad			
	L^{(4)}(z) := \frac{1}{\sqrt{2}} \left(2\pi n\right)^{-\sigma_3/2} f_B(z)^{-\sigma_3/4}
	\begin{pmatrix}
	1 & i \\
	i & 1
	\end{pmatrix}.
	\end{equation}
	
	The parametrix $P^{(1)}$ in a small neighborhood $D_1$ of the hard edge $-1$ can be constructed by exploring the symmetry w.r.t. the imaginary axis, which leads to
	$$
	P^{(1)}(z):=\overline{P^{(4)}(-\overline z)},\quad z\in D_1.
	$$
	
	\subsubsection{Soft Edges}
	Let $D_2:=D_\delta\left(-\overline{z_*}\right)$ be a small disc centered at $-\overline{z_*}$ of radius $\delta>0$. We seek a local parametrix, $P^{(2)}(z)$, defined on $D_2$, which is the solution to the following Riemann-Hilbert problem:
	\begin{subequations}\label{eq: local RHP soft edge}
		\begin{alignat}{2}
		&P^{(2)}(z) \text{ is analytic in } D_2 \backslash \Gamma_S, \\
		&P^{(2)}_+(z)= P^{(2)}_-(z)J_S(z), \qquad && z \in D_2 \cap \Gamma_S, \\
		&P^{(2)}(z) = \left(I + \mathcal{O}\left(\frac{1}{n}\right)\right)M(z), \qquad && \text{uniformly on } \partial D_2 \text{ as } n \to \infty. \label{eq: matching condition soft edge}
		\end{alignat}
	\end{subequations}
	We also require that $P^{(2)}(z)$ has a continuous extension to $\overline{D_2} \backslash \Gamma_S$ and remains bounded as $z \to -\overline{z_*}$. 
	From standard methods, it is natural to seek a solution of the form		
	\begin{equation}\label{eq: U2 eqn}
	P^{(2)}(z) = U^{(2)}(z) e^{n\left[\phi^{(2)}(z)+\frac{i\pi}{2}\right]\sigma_3},
	\end{equation}		
	where
	\begin{equation}\label{eq: phi 2 defn}
	\phi^{(2)}(z) := \int_{-\overline{z_*}}^{z} Q^{1/2}(s)\, ds - \frac{i \kappa}{2}. 
	\end{equation}
	
	Then the RHP for $P^{(2)}$ reduces to a RHP for $U$, which in turn can be solved explicitly using Airy functions in a now standard way. Set $\omega:=\exp\left(2\pi i/3\right)$,
	\begin{equation}\label{eq: Airy functions}
	y_0(z) := \text{Ai}(z), \qquad y_1(z):=\omega\text{Ai}(\omega z), \qquad y_2(z):=\omega^2 \text{Ai}(\omega^2 z),
	\end{equation}
	where $\text{Ai}$ is the Airy function, and define the Airy parametrix as in \cite{bleher2011lectures} by
	\begin{align}\label{eq: Airy parametrix}
	A(z) = \begin{cases}
	\begin{pmatrix}
	y_0(z) & -y_2(z) \\
	y_0'(z) & -y_2'(z) 
	\end{pmatrix}, \qquad & \arg z \in \left(0, \frac{2\pi}{3}\right), \\
	\begin{pmatrix}
	-y_1(z) & -y_2(z) \\
	-y_1'(z) & -y_2'(z) 
	\end{pmatrix}, \qquad & \arg z \in \left(\frac{2\pi}{3},\pi\right),\\
	\begin{pmatrix}
	-y_2(z) & y_1(z) \\
	-y_2'(z) & y_1'(z) 
	\end{pmatrix}, \qquad & \arg z \in \left(-\pi, -\frac{2\pi}{3}\right),\\
	\begin{pmatrix}
	y_0(z) & y_1(z) \\
	y_0'(z) & y_1'(z) 
	\end{pmatrix}, \qquad & \arg z \in \left(-\frac{2\pi}{3},0\right).
	\end{cases}
	\end{align}
	%		Using \eqref{eq: Airy functions}, we see that the Airy parametrix satisfies
	%		\begin{equation}\label{eq: Airy parametrix asymptotics}
	%			A(z) = \frac{z^{-\sigma_3/4}}{2\sqrt{\pi}}\begin{pmatrix}
	%				1 & i \\
	%				-1 & i
	%			\end{pmatrix}\left(I + \mathcal{O}\left(\frac{1}{z^{3/2}}\right)\right)e^{-\frac{2}{3}z^{3/2}\sigma_3}, \qquad z \to \infty, 
	%		\end{equation}
	%		where $\arg z \in \left(-\pi,\pi\right)$. 
	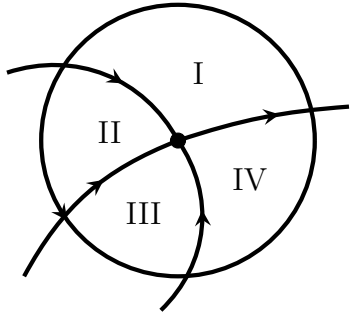
\begin{figure}[t]
		\centering
		\begin{tikzpicture}[scale=4.5]
		\draw[ultra thick, postaction={mid arrow=black}] (0,0) arc (0:360:0.4 and 0.4);
		\draw [fill] (-0.4,0) circle [radius=0.022];
		
		\draw [ultra thick,postaction={mid arrow=black}] (-0.45,-0.5) to [bend right=40] (-0.4, 0);
		\draw [ultra thick,postaction={mid arrow=black}] (-0.9,0.2) to [bend left=40] (-0.4, 0);
		\draw [ultra thick,postaction={mid arrow=black}] (-0.85,-0.4) to [bend left=20] (-0.4, 0);
		\draw [ultra thick,postaction={mid arrow=black}] (-0.4,0) to [bend left=7] (0.1, 0.1);
		
		\node at (-0.34,.20) {I};
		\node at (-0.6,.025) {II};
		\node at (-0.5,-.215) {III};
		\node at (-0.19,-.115) {IV};
		\end{tikzpicture}
		\caption{Definition of Sectors I, II, III, and IV within $D_2$.}
		\label{fig: defn of I II III IIV}
	\end{figure}
	Then $U^{(2)}$ takes the form
	\begin{equation}\label{eq: U2 eqn Airy}
	U^{(2)}(z) = E_n^{(2)}(z)A(f_{n,A}(z)), 
	\end{equation}
	where $A$ is the Airy parametrix as above, $f_{n,A}$ is the conformal map given by
	\begin{equation}\label{eq: Airy conformal map}
	f_{n,A}(z) = n^{2/3}f_A(z), \qquad f_A(z) = \left[\frac{3}{2}\left(\phi^{(2)}(z)+\frac{i\kappa}{2}\right)\right]^{2/3}. 
	\end{equation}
	From \eqref{eq: phi 2 defn}, we can write for $z$ in a neighborhood $-\overline{z_*}$,
	\begin{equation}\label{eq: conf map airy}
		\phi^{(2)}(z) + \frac{i\kappa}{2} = \frac{2}{3}\left(z+\overline{z_*}\right)^{3/2} h(z),
	\end{equation}
	where the cut for $(z+\overline{z_*})^{3/2}$ is taken on $\gamma_1$ and $h(z)$ is analytic in a neighborhood of $-\overline{z_*}$, with the following Taylor expansion near $-\overline{z_*}$,
	\begin{equation}
		h(z) = h(-\overline{z_*}) + h'(-\overline{z_*})\left(z+\overline{z_*}\right) + \dots,
	\end{equation}
	and 
	\begin{equation}
		h(-\overline{z_*}) =\frac{\sqrt{2x\left(4+4ix\lambda + \lambda^2(1-x^2)\right)}}{\lambda} \not= 0.
	\end{equation}
	From \eqref{eq: conf map airy}, we see that $f_A$ has no jumps within $D_2$, and as $f_A(-\overline{z_*}) = 0$, we can conclude the singularity at $-\overline{z_*}$ is again removable. As $f_A'(-\overline{z_*}) = h(-\overline{z_*}) \not= 0$, we see that $f_A$ is indeed a conformal mapping onto a neighborhood of $0$. 
	The prefactor $E_n^{(2)}$ is 
	\begin{align}\label{eq: En eqn}
	E_n^{(2)}(z) =\begin{cases}
	M(z) e^{-n\left[\frac{i\pi}{2}-\frac{i\kappa}{2}\right]\sigma_3}L_N(z)^{-1}, \qquad & z\in \text{I}, \text{II}, \\
	M(z) e^{-n\left[-\frac{i\pi}{2}-\frac{i\kappa}{2}\right]\sigma_3}L_N(z)^{-1}, \qquad & z\in \text{III}, \text{IV},
	\end{cases}
	\end{align}
	where Sectors I, II, III, and IV are defined in Figure~\ref{fig: defn of I II III IIV}, and
	\begin{equation*}
	L_N(z) = \frac{1}{2\sqrt{\pi}}n^{-\sigma_3/6}f_A(z)^{-\sigma_3/4}\begin{pmatrix}
	1 & i \\
	-1 & i
	\end{pmatrix}.
	\end{equation*}
	In the formulas above, all the roots are taken to be the principal branches.	
	
	In a similar fashion to the hard edge scenario, we compute the local parametrix $P^{(3)}$ in a small neighborhood $D_3$ of $z_*$	via symmetry, which leads to
	$$
	P^{(3)}(z):=\overline{P^{(3)}(-\overline z)},\quad z\in D_3.
	$$

	%--------------------------------------------------------------------------------------------------------------------------------------------
	% Final Transformation
	%--------------------------------------------------------------------------------------------------------------------------------------------
	
	\subsection{Final Transformation}
	As the last transformation of the steepest descent method, we define
	\begin{align}\label{eq: R eqn}
	R(z) = \begin{cases}
	S(z)M(z)^{-1}, \qquad &z \in \mathbb{C}\backslash\left(\bigcup\limits_{i=1}^{4} D_i \cup \Gamma_S\right),\\
	S(z)P^{(j)}(z)^{-1}, \qquad &z \in D_j\backslash\Gamma_S, \qquad  j=1,\dots,4.
	\end{cases}
	\end{align}
	Then $R$ solves a Riemann-Hilbert problem of the form,
	\begin{subequations}\label{eq: R RHP}
		\begin{alignat}{2}
		&R(z) \text{ is analytic in } \mathbb{C}\backslash \Gamma_R, \\
		&R_+(z) = R_-(z) J_R(z), \qquad\qquad &&z\in \Gamma_R, \\
		&R(z) = I + \mathcal{O}\left(\frac{1}{z}\right), \qquad &&z \to \infty,
		\end{alignat}
	\end{subequations}
	where the contour $\Gamma_R$ is depicted in Figure~\ref{fig: Gamma R}, and the jump matrix $J_R$ that satisfies
	\begin{align*}
	J_R(z) = \begin{cases}
	I+\mathcal{O}\left(e^{-cn}\right), \qquad & z \in \Gamma_R\backslash\left(\bigcup\limits_{i=1}^{4} \partial D_i\right), \\
	I+\mathcal{O}\left(\frac{1}{n}\right), \qquad & z \in \bigcup\limits_{i=1}^{4} \partial D_i,
	\end{cases}
	\end{align*}
	with uniform error terms. The definition of $M$, and of $R$ itself, only requires that $n$ is even or $2n\kappa(\lambda)-c(\lambda)\notin 2\pi \Z$. The condition that $(n,\lambda)\notin \Theta_\varepsilon$ appears here to ensure that the entries of $M$ remain bounded as $n\to\infty$, see Theorem~\ref{thm: global parametrix existence_2} above, and consequently ensure that $J_R$ indeed decays to the identity as claimed.
	
	\begin{figure}[t]
		\centering
		\begin{tikzpicture}[scale=1.5]
		\draw [ultra thick,postaction={mid arrow=black}] (-2.2,0.35) to [bend left=30] (-1.75, 1.5);
		\draw [ultra thick,postaction={mid arrow=black}] (-1.6,0.15) to [bend right=30] (-1.25, 1.25);
		
		\draw [ultra thick,postaction={mid arrow=black}] (1.75,1.5) to [bend left=30] (2.2, 0.35);
		\draw [ultra thick,postaction={mid arrow=black}] (1.25,1.25) to [bend right=30] (1.6, 0.15);
		
		\draw [ultra thick, postaction={mid arrow=black}] (-1.15,2) to [bend left =40] (1.15,2);
		
		\draw[ultra thick, postaction={mid arrow=black}] (-1.6,0) arc (0:360:0.4 and 0.4);
		\draw[ultra thick, postaction={mid arrow=black}] (1.6,0) arc (180:540:0.4 and 0.4);
		\draw[ultra thick, postaction={mid arrow=black}] (-0.95,1.65) arc (0:360:0.4 and 0.4);
		\draw[ultra thick, postaction={mid arrow=black}] (1.75,1.64) arc (0:360:0.4 and 0.4);
		
		\node at (2,0) {$D_4$};
		\node at (-2,0) {$D_1$};
		\node at (-1.35,1.65) {$D_2$};
		\node at (1.35,1.65) {$D_3$};
		\end{tikzpicture}
		\caption{The contour $\Gamma_R$.}
		\label{fig: Gamma R}
	\end{figure}
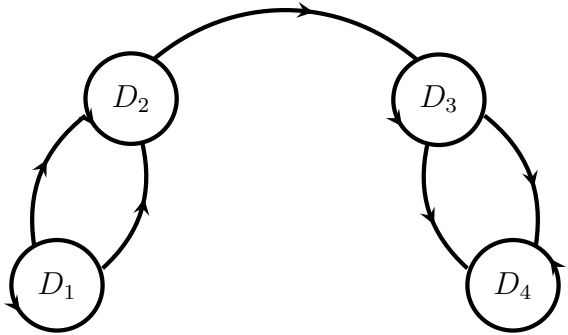
	
	As shown in \cite{bleher2011lectures}, this is enough to assure that
	\begin{equation}\label{eq:identity_convergence_R}
	R(z) = I + \mathcal{O}\left(\frac{1}{n}\right), \qquad n \to \infty,
	\end{equation}
	uniformly for $z \in \mathbb{C}\backslash\Gamma_R$ as $n\to\infty$ with $n$ even or with $(n,\lambda)\notin \Theta_\varepsilon$. We are now left to retrace our steps in the transformations of nonlinear steepest descent to obtain the uniform asymptotic formulas for $p_n^\lambda$.
	
	\section{Asymptotics of the Kissing Polynomials: Discussion on Theorem~\ref{thm: strong asymptotics gamma}}\label{sec: asymptotics}
	The procedure of recovering asymptotics of the orthogonal polynomials from the nonlinear steepest descent analysis of the associated Riemann-Hilbert problem is now standard, so we omit details below, and instead refer the reader to \cite{bleher2011lectures,deano2014large,deift1999orthogonal} for detailed analysis. In particular, tracing back all the transformations $Y\mapsto T\mapsto S\mapsto R$, we arrive at the identity
$$
p_n^{\lambda}(z)=e^{-n\left(\ell -\frac{i\kappa}{2}+\phi(z)-\frac{V(z)}{2}\right)}\left(R_{11}(z)M_{11}(z)+R_{12}(z)M_{21}(z)\right).
$$	
The function $M_{21}$ is analytic away from $\gamma_1\cup\gamma_2$ and according to Theorem~\ref{thm: global parametrix existence} either  one of $M_{11}$ or $M_{21}$ has a zero, depending on the parity of $n$, but not both. We can then use \eqref{eq:identity_convergence_R} to prove the weak convergence claimed by Theorem~\ref{thm: strong asymptotics gamma}, as well as to find the asymptotic formula
	\begin{equation}\label{eq: asymptotic formula 1}
	p_n^\lambda(z)=e^{n\left( \frac{i\kappa}{2}+\frac{V(z)}{2}-l-\phi(z)\right)}\left(M_{11}(z)+\mathcal{O}\left(\frac{1}{n}\right)\right),
	\end{equation}
	the identity above being valid with uniform error term for $z$ on compacts of $\C\setminus(\gamma_1\cup\gamma_2)$. This formula is essentially \eqref{eq: asymptotics outside gamma}, but for the sake of completeness we now rewrite $M_{11}$ in a more explicit form that leads to the formulation \eqref{eq: asymptotics outside gamma} with the functions $\Psi_{n,0}$ and $\Psi_{n,1}$, summarizing our findings in a language that does not require prior knowledge of meromorphic differentials on Riemann surfaces.
	
	In explicit terms, the function $u_1^{(1)}$ in \eqref{def: u1} can be written as
	\begin{multline*}
	u_1^{(1)}(z)= b_*\int_1^z \frac{ds}{Q^{1/2}(s)(s^2-1)} \\
	+ \frac{1}{2}\int_1^z\left[\frac{1}{z-y_*}\left(1-\frac{Q^{1/2}(y_*)(y_*^2-1)}{Q^{1/2}(s)(s^2-1)}\right) + \frac{1}{z-a_*}\left(1+(-1)^{n+1}\frac{Q^{1/2}(a_*)(a_*^2-1)}{Q^{1/2}(s)(s^2-1)}\right) \right]ds,
	\end{multline*}
	where the path of integration emanates from $s=1$ in the upper half-plane and is contained in $\C\setminus\left(\gamma_1\cup\widehat \gamma\cup \gamma_2\cup (1,+\infty)\right)$. Here, $y_*$ is given by \eqref{def: zero N} and the points $a_*$ and $b_*$ are uniquely determined by the requirement that
	$$
	\int_{-\overline z_*}^{z^*}\left(2b_*-\frac{Q^{1/2}(s)}{s-y_*}+(-1)^{n+1}\frac{Q^{1/2}(s)}{s-a_*}\right)\frac{ds}{Q^{1/2}(s)(s^2-1)}=2n\kappa i
	$$
	and
	$$
	\int_{\gamma_1}\left(2b_*-\frac{Q^{1/2}(s)}{s-y_*}+(-1)^{n+1}\frac{Q^{1/2}(s)}{s-a_*}\right)\frac{ds}{Q_+^{1/2}(s)(s^2-1)}=n\pi i,
	$$
	where these identities are valid modulo $2\pi i$. Such $u_1^{(1)}$ is well defined modulo $2\pi i$ on $\C\setminus (\gamma_1\cup\widehat \gamma\cup \gamma_1)$, and we write $u_1^{(1)}=u_0$ for $n$ even and $u_1^{(1)}=u_1$ for $n$ odd. The function $u_0$ has no singularities on its domain of definition, whereas $u_1$ has a unique singularity at $z=a_*$, with behavior as in \eqref{eq: u near a n odd}. Next, for $\eta$ as in \eqref{eq: eta eqn} we set
	$$
	\Psi_{n,j}(z)=M_{11}(z)=\frac{\eta^2(z)+1}{2\eta(z)}e^{u_j(z)},\quad z\in \C\setminus (\gamma_1\cup\gamma_2), \quad j=0,1.
	$$
	Plugging this back into \eqref{eq: asymptotic formula 1} leads to the formulation of Theorem~\ref{thm: strong asymptotics gamma}.

%--------------------------------------------------------------------------
% Appendix
%--------------------------------------------------------------------------
\appendix

\section{Symmetry Relations}\label{appendix: symmetries}

The following two technical lemmas in complex analysis were extensively used throughout the text. Their proofs are straightforward. 

\begin{lemma}
Let $\gamma_1$ be an contour on the left half plane, from $p$ to $q$, and $\gamma_2$ be the contour obtained from $\gamma_1$ upon reflection over the imaginary axis, oriented from $-\overline{q}$ to $-\overline{p}$.

Suppose that a function $f$ satisfies the symmetry relation
$$
\overline{f(s)}=\delta f(-\overline{s}),
$$
where $\delta\in \{+1,-1\}$.
Then
$$
\overline{\int_{\gamma_1} f(s)ds}=\delta\int_{\gamma_2}f(s)ds
$$
In particular, 
$$
\int_{\gamma_1}f(s)ds+\int_{\gamma_2}f(s)ds=
\begin{cases}
2\re \int_{\gamma_1}f(s)ds,& \mbox{if }\; \delta=1, \\
2i \im \int_{\gamma_1}f(s)ds, & \mbox{if } \; \delta=-1.
\end{cases}
$$
\end{lemma}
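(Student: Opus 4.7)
The plan is to prove the first identity by direct computation via parameterization, and then deduce the second identity as an immediate corollary by splitting complex numbers into real and imaginary parts.

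First I would fix a parameterization $s:[0,1]\to\C$ of $\gamma_1$ with $s(0)=p$ and $s(1)=q$. Because $\gamma_2$ is the reflection of $\gamma_1$ across the imaginary axis oriented from $-\overline{q}$ to $-\overline{p}$, it is naturally parameterized by $w(t):=-\overline{s(1-t)}$ for $t\in[0,1]$, satisfying $w(0)=-\overline{q}$ and $w(1)=-\overline{p}$. A direct computation gives $w'(t)=\overline{s'(1-t)}$. Taking the complex conjugate of the line integral along $\gamma_1$ and using the hypothesis $\overline{f(s)}=\delta f(-\overline{s})$ yields
\begin{equation*}
\overline{\int_{\gamma_1} f(s)\,ds} \;=\; \int_0^1 \overline{f(s(t))}\,\overline{s'(t)}\,dt \;=\; \delta\int_0^1 f(-\overline{s(t)})\,\overline{s'(t)}\,dt.
\end{equation*}
The substitution $u=1-t$ in the integral defining $\int_{\gamma_2}f(s)\,ds=\int_0^1 f(w(t))w'(t)\,dt$ shows that this last expression equals $\delta\int_{\gamma_2}f(s)\,ds$, proving the main identity.

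For the second claim I would simply combine the identity just established with the observation that for any complex number $z$, one has $z+\overline{z}=2\re z$ and $z-\overline{z}=2i\im z$. Indeed, setting $z:=\int_{\gamma_1}f(s)\,ds$, we get $\int_{\gamma_1}f(s)\,ds+\int_{\gamma_2}f(s)\,ds = z + \delta^{-1}\overline{z}$, which is $2\re z$ when $\delta=1$ and $2i\im z$ when $\delta=-1$ (using $\delta^{-1}=\delta$).

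There is no real obstacle here: the entire content is bookkeeping for how conjugation interacts with the orientation reversal implicit in the definition of $\gamma_2$. The only place one must be careful is in recognizing that the orientation convention for $\gamma_2$ (from $-\overline q$ to $-\overline p$, rather than the more naive $-\overline p$ to $-\overline q$) is precisely what is needed to cancel the sign that would otherwise appear from conjugating $s'(t)$, so that no extra minus signs are introduced in the final formula.
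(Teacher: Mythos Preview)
Your proof is correct. The paper does not actually prove this lemma, merely stating that its proof is ``straightforward''; your direct parameterization argument is precisely the expected straightforward verification, and your care with the orientation convention for $\gamma_2$ is exactly the point.
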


\begin{lemma}
Let $\widehat{\gamma}$ be a contour symmetric with respect to the imaginary axis and $f$ be as in the previous lemma. Then
$$
\im \int_{\widehat \gamma}f(s)ds=0,\quad \mbox{if }\;\delta=1,\qquad \re \int_{\widehat \gamma}f(s)ds=0,\quad \mbox{if }\; \delta=-1.
$$
\end{lemma}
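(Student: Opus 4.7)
The plan is to reduce this statement to the previous lemma by splitting $\widehat{\gamma}$ along the imaginary axis. Write $\widehat{\gamma}=\widehat{\gamma}_L\cup\widehat{\gamma}_R$, where $\widehat{\gamma}_L$ is the portion lying in the (closed) left half-plane and $\widehat{\gamma}_R$ is the portion lying in the (closed) right half-plane, each inheriting its orientation from $\widehat{\gamma}$. (If $\widehat{\gamma}$ crosses the imaginary axis multiple times, the same argument applies to each piece.)

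Next I would use the symmetry of $\widehat{\gamma}$ with respect to $i\R$ to identify these two pieces. Suppose $\widehat{\gamma}_L$ runs from $p$ to $q$; then its reflection through the imaginary axis is a contour from $-\overline{p}$ to $-\overline{q}$. On the other hand, reversing orientation and applying the previous lemma's conventions, one sees that $\widehat{\gamma}_R$ is precisely the reflection of $\widehat{\gamma}_L$, oriented from $-\overline{q}$ to $-\overline{p}$, because $\widehat{\gamma}$ itself is symmetric and the crossing points lie on $i\R$ (hence are fixed under $s\mapsto-\overline{s}$ only up to a sign, but the matching of orientations is forced by the symmetry assumption). Thus the pair $(\widehat{\gamma}_L,\widehat{\gamma}_R)$ fits exactly into the setup of the previous lemma.

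The main step is then a direct application: decomposing
$$
\int_{\widehat{\gamma}}f(s)\,ds=\int_{\widehat{\gamma}_L}f(s)\,ds+\int_{\widehat{\gamma}_R}f(s)\,ds,
$$
and invoking the previous lemma, the right-hand side equals $2\re\int_{\widehat{\gamma}_L}f(s)\,ds$ when $\delta=1$ (a real number, so its imaginary part vanishes) and $2i\im\int_{\widehat{\gamma}_L}f(s)\,ds$ when $\delta=-1$ (a purely imaginary number, so its real part vanishes). This yields the two claims.

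The only subtlety, and thus the part of the argument that requires the most care, is the bookkeeping of orientations when $\widehat{\gamma}$ crosses $i\R$. The fact that $\widehat{\gamma}$ is symmetric forces the crossing points to lie on $i\R$ and forces the left and right pieces to be exchanged (with matched orientations) by the reflection $s\mapsto -\overline{s}$; once this is verified, the application of the previous lemma is immediate and there is no further computation.
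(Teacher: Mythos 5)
The paper declares the proofs of both appendix lemmas ``straightforward'' and omits them, so there is no written proof to compare against. Your reduction---splitting $\widehat{\gamma}$ at $i\R$ into mirror-image halves and invoking the ``in particular'' conclusion of the first lemma---is correct and is exactly the argument that conclusion was designed to feed into, so this is almost certainly the intended proof.

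One small slip in the orientation bookkeeping: a point $q\in i\R$ is \emph{fixed} by $s\mapsto-\overline{s}$ (if $q=iy$ with $y\in\R$, then $-\overline{q}=q$), not ``fixed only up to a sign.'' This is in fact precisely why the orientations match: if $\widehat{\gamma}_L$ runs from $p$ to a crossing point $q\in i\R$, then $\widehat{\gamma}_R$, inheriting orientation from $\widehat{\gamma}$, starts at $q=-\overline{q}$ and ends at $-\overline{p}$, which is exactly the orientation of $\gamma_2$ required in the first lemma. You might also note the even shorter direct route: the change of variables $s\mapsto-\overline{s}$ maps $\widehat{\gamma}$ onto itself with reversed orientation (this anti-holomorphic reflection is orientation-reversing), giving $\overline{\int_{\widehat\gamma}f\,ds}=\delta\int_{\widehat\gamma}f\,ds$ in one line, from which both claims follow immediately. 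Either route is fine; your argument as written is correct once the phrase about ``up to a sign'' is cleaned up.
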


%----------------------------------------------------------------------------
% Bibliography
%----------------------------------------------------------------------------
\bibliographystyle{plain}

\end{document}